\newtheorem{dfn}{Definition}[section]
\newtheorem{thm}[dfn]{Theorem}
\newtheorem{lem}[dfn]{Lemma}
\newtheorem{cor}[dfn]{Corollary}
\newtheorem{prop}[dfn]{Proposition}
\newtheorem{rem}[dfn]{Remark}
\newtheorem{conj}[dfn]{Conjecture}
\newtheorem{claim}{Claim}[dfn]
\crefname{dfn}{Definition}{Definitions}
\crefname{thm}{Theorem}{Theorems}
\crefname{fact}{Fact}{Facts}
\crefname{lem}{Lemma}{Lemmas}
\crefname{cor}{Corollary}{Corollaries}
\crefname{prop}{Proposition}{Propositions}
\crefname{rem}{Remark}{Remarks}
\crefname{notation}{Notation}{Notations}
\crefname{claim}{Claim}{Claims}
\crefname{conj}{Conjecture}{Conjectures}
\crefname{figure}{Figure}{Figures}
\renewcommand{\subset}{\subseteq}
\newcommand{\power}{\wp}
\newcommand{\uphar}{\mathbin{\upharpoonright}}
\newcommand{\HOD}{\mathrm{HOD}}
\DeclareMathOperator{\cHull}{cHull}
\DeclareMathOperator{\cf}{cf}
\DeclareMathOperator{\crit}{crit}
\DeclareMathOperator{\lh}{lh}
\DeclareMathOperator{\ord}{Ord}
\DeclareMathOperator{\ult}{Ult}
\title{On $\omega$-strongly measurable cardinals in $\mathbb{P}_\mathrm{max}$ extensions}
\author{Navin Aksornthong}
\address{Navin Aksornthong, Hill Center - Busch Campus, 110 Frelinghuysen Road, Piscataway, NJ 08854-8019, USA.}
\email{navin.aksornthong@rutgers.edu}
\author{Takehiko Gappo}
\address{Takehiko Gappo, Institut für Diskrete Mathematik und Geometrie, TU Wien, Wiedner Hauptstrasse 8-10/104, 1040 Wien, Austria.}
\email{takehiko.gappo@tuwien.ac.at}
\author{James Holland}
\address{James Holland}
\email{jch258@scarletmail.rutgers.edu}
\author{Grigor Sargsyan}
\address{Grigor Sargsyan, IMPAN, Antoniego Abrahama 18, 81-825 Sopot, Poland.}
\email{gsargsyan@impan.pl}
\subjclass[2020]{03E45, 03E60, 03E35, 03E55}
\keywords{Inner model theory, determinacy, $\omega$-strongly measurable cardinals, HOD conjecture, $\mathbb{P}_{\max}$ forcing.}
\date{July 25, 2023}
\begin{document}
	\maketitle

    \begin{abstract}
        We show that in the $\mathbb{P}_{\mathrm{max}}$ extension of a certain Chang-type model of determinacy, if $\kappa\in\{\omega_1, \omega_2, \omega_3\}$, then the restriction of the club filter on $\kappa\cap\mathrm{Cof}(\omega)$ to $\HOD$ is an ultrafilter in $\HOD$. This answers Question 4.11 of \cite{omega-strongly_meas} raised by Ben-Neria and Hayut. 
    \end{abstract}
    
	\section{Introduction}

    Recently there has been a great deal of interest in models which are obtained via forcing over models of determinacy and in which $\omega_3$ exhibits interesting combinatorial properties. Woodin's seminal \cite[Chapter 9]{AD_ForcingAxiom_NSIdeal} builds models in which  $\omega_2$ has a rich combinatorial structure, but \cite{AD_ForcingAxiom_NSIdeal} does not investigate the combinatorial structure of $\omega_3$ in $\mathbb{P}_{\mathrm{max}}$ extensions. It seems that \cite{PmaxSquare} is the first paper that initiates the study of $\mathbb{P}_{\mathrm{max}}$ extensions in which $\omega_3$ is combinatorially rich. This work was later continued in \cite{larson2021failures}. The current work studies a subclass of the model introduced in \cite{CoveringChang} in which a weaker version of Woodin's $\HOD$ conjecture fails at $\omega_3$.

    Woodin's $\HOD$ dichotomy theorem roughly says that if there is an extendible cardinal, $\HOD$ is either very close to $V$ or very far from $V$, like Jensen's covering lemma for $L$. One of the formulations is as follows.

    \begin{thm}[Woodin; $\HOD$ dichotomy theorem, \cite{HOD_dichotomy}]\label{HOD_dichotomy}
    Suppose that $\delta$ is extendible.
    Then exactly one of the following holds.
    \begin{enumerate}
        \item Every singular cardinal $\lambda$ above $\delta$ is singular in $\HOD$ and $(\lambda^+)^{\HOD}=\lambda^+$.
        \item Every regular cardinal above $\delta$ is measurable in $\HOD$.
    \end{enumerate}
    \end{thm}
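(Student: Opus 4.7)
The plan is to introduce an auxiliary notion that bridges the two alternatives: call a regular cardinal $\kappa$ \emph{$\omega$-strongly measurable in $\HOD$} if there is some $\eta < \kappa$ with $(2^{\eta})^{\HOD} < \kappa$ such that $\HOD$ contains no partition of $\kappa \cap \mathrm{Cof}(\omega)$ into $\eta$ stationary sets. The mutual exclusivity of (1) and (2) is then immediate: if (1) holds then $(\lambda^+)^{\HOD} = \lambda^+$ is a successor cardinal in $\HOD$ and therefore non-measurable there, contradicting (2) at $\kappa = \lambda^+$. The real content is to show that at least one of the two alternatives holds.

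First, I would establish two technical pillars that do not yet invoke extendibility. \textbf{Pillar A:} any $\omega$-strongly measurable in $\HOD$ cardinal $\kappa$ is measurable in $\HOD$, with witnessing ultrafilter given by the club filter on $\kappa \cap \mathrm{Cof}(\omega)$ restricted to $\HOD$. The argument is a Ulam-matrix style pigeonhole: if this filter failed to be an ultrafilter on $\HOD$-sets, one could iterate the splitting of non-decided $\HOD$-sets to produce, in $\HOD$, a forbidden partition of $\kappa \cap \mathrm{Cof}(\omega)$ into $\eta$ stationary pieces. \textbf{Pillar B:} if (1) fails at some singular $\lambda > \delta$ with $\cf(\lambda) = \omega$, then some regular $\kappa > \delta$ (essentially $\lambda^+$) is $\omega$-strongly measurable in $\HOD$. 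Here one invokes Shelah PCF theory to convert the mismatch $(\lambda^+)^{\HOD} < \lambda^+$ into the statement that any $\HOD$-partition of $\lambda^+ \cap \mathrm{Cof}(\omega)$ of size less than $\lambda$ must have a non-stationary piece, since the true cofinal scales on $\prod_n \lambda_n$ used to refine the partition are strictly longer than anything $\HOD$ sees.

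Next I would use the extendibility of $\delta$ to globalize. Given any regular $\kappa > \delta$, fix an extendibility embedding $j \colon V_\alpha \to V_\beta$ with $\crit(j) = \delta$ and $j(\delta)$ above both $\kappa$ and the witness cardinal $\kappa_0$ produced by Pillar B, and with $\alpha$ large enough that $\HOD \cap V_\alpha$ correctly computes $\omega$-strong measurability at $\kappa_0$. Since $\HOD$ is definable without parameters and $j$ preserves the relevant fragment, the property ``being $\omega$-strongly measurable in $\HOD$'' is $\Sigma_2$-expressible and therefore reflects; sliding $j(\delta)$ arbitrarily high then covers every regular $\kappa > \delta$. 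Combined with Pillar A this delivers (2) whenever (1) fails.

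The main obstacle will be executing this globalization cleanly, because $\HOD$ is not automatically preserved under elementary embeddings: one must pin down a $\Sigma_2$-definable fragment of $\HOD^{V_\gamma}$ rich enough to express $\omega$-strong measurability, and check that the extendibility hypothesis furnishes embeddings $j$ that relate $\HOD^{V_\alpha}$ and $\HOD^{V_\beta}$ correctly. Pillar B is the other delicate ingredient, since the PCF step must upgrade a \emph{single} failure of $(\lambda^+)^{\HOD} = \lambda^+$ into the \emph{global} non-existence of a small $\HOD$-partition of $\lambda^+ \cap \mathrm{Cof}(\omega)$; this is the heart of Woodin's original argument and will probably be the longest part of the proof.
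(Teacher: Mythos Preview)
The paper does not prove this theorem. Theorem~1.1 is stated in the introduction purely as background and motivation, with a citation to \cite{HOD_dichotomy}; the paper's own contributions begin only with the analysis of the Chang-type model $\mathsf{CDM}^{-}$ in Section~2. There is therefore no ``paper's own proof'' to compare your proposal against.

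For what it is worth, your outline tracks the structure of the original Woodin--Davis--Rodr\'{\i}guez argument reasonably well: the auxiliary notion of $\omega$-strong measurability, the splitting argument for Pillar~A, and the use of extendibility embeddings to propagate the property are all present in the source. Your Pillar~B is the shakiest part: the actual argument does not invoke PCF theory in the way you describe, but rather uses a direct stationary-reflection and club-guessing analysis together with the observation that the failure of covering at a singular $\lambda$ of countable cofinality forces $\lambda^{+}$ to be $\omega$-strongly measurable via the $\HOD$-cardinal $(\lambda^{+})^{\HOD}<\lambda^{+}$ as the witnessing $\eta$. You also need to handle the case where (1) fails because some $\lambda$ is singular in $V$ but regular in $\HOD$, not only the successor-mismatch case. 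If you intend to write this up in full, consult \cite{HOD_dichotomy} directly rather than the present paper.
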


    If the second case holds in \cref{HOD_dichotomy}, regular cardinals above $\delta$ are measurable in $\HOD$ in a strong sense, namely, $\omega$-strongly measurable in $\HOD$.
    
    \begin{dfn}[Woodin, \cite{suitable_extender_I}\footnote{The terminology in (1) is due to Ben-Neria and Hayut (\cite{omega-strongly_meas}).}]
        Let $\kappa$ be an uncountable regular cardinal and let $S\subset\kappa$ be a stationary subset with $S\in\HOD$.
        \begin{enumerate}
            \item Let $\eta$ be a cardinal of $\HOD$. Then $\kappa$ is \emph{$(S, {<}\eta)$-strongly measurable in $\HOD$} if there is no partition in $\HOD$ of $S$ into $\eta$ many disjoint stationary subsets of $\kappa$.
            \item $\kappa$ is called \emph{$\omega$-strongly measurable in $\HOD$} if it is $(\kappa\cap\mathrm{Cof}(\omega), {<}\eta)$-strongly measurable for some $\HOD$-cardinal $\eta$ such that $(2^{\eta})^{\HOD}<\kappa$, where $\mathrm{Cof}(\omega)$ denotes the class of ordinals of countable cofinality.
        \end{enumerate}
    \end{dfn}

    Woodin conjectured that the first case in \cref{HOD_dichotomy} always holds.

    \begin{conj}[Woodin; $\HOD$ conjecture, \cite{suitable_extender_I}]
        There is a proper class of uncountable regular cardinals that are not $\omega$-strongly measurable in $\HOD$.
    \end{conj}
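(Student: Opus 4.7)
The final statement is Woodin's $\HOD$ conjecture, which is famously open and widely regarded as one of the central problems in contemporary set theory; no direct combinatorial attack is currently known, so any proof proposal here is necessarily speculative. The plan is to proceed via Woodin's Ultimate-$L$ program. Assuming an extendible $\delta$ so that \cref{HOD_dichotomy} applies, I would aim to rule out Case (2) by constructing a canonical inner model $M$ of the form $L[\vec E]$ satisfying an appropriate form of the axiom ``$V=\text{Ultimate-}L$'', arranged so that $M\subset\HOD$ and so that $\HOD$ inherits from $M$ a strong covering statement with respect to $V$.

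Concretely, the steps are: (i) define a suitable extender sequence $\vec E$ via Mitchell--Steel style comparison of background-certified extenders up through the level of $\delta$; (ii) prove iterability of the resulting premouse in the presence of long extenders overlapping Woodin cardinals; (iii) establish a weak covering lemma of the form $(\lambda^+)^{\HOD}=\lambda^+$ for all singular $\lambda>\delta$; and (iv) conclude that Case (1) of \cref{HOD_dichotomy} holds. From Case (1), the nonexistence of an $\omega$-strongly measurable proper class of regulars follows once one shows that $\HOD$ computes enough cofinalities correctly that no $\HOD$-partition of $\kappa\cap\mathrm{Cof}(\omega)$ into fewer than $\kappa$ stationary sets can exist for the relevant regulars $\kappa>\delta$.

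The hard part --- and in fact the central open problem of inner model theory --- is step (ii): the construction and iterability theory for canonical extender models at the level of a supercompact or extendible cardinal, where standard Mitchell--Steel techniques degenerate because of long extenders overlapping Woodin cardinals and no satisfactory comparison theory is presently available. I would further note that the main theorem of the present paper demonstrates that a natural \emph{weak} version of the conjecture --- demanding non-$\omega$-strong-measurability at specific small successor cardinals --- fails outright in $\mathbb{P}_{\max}$ extensions of Chang-type models of determinacy. Consequently any successful strategy must exploit global features of $V$ such as extendibility and cannot hope to work cardinal-by-cardinal at $\omega_1,\omega_2,\omega_3$; in particular, the argument must be sensitive to whether sufficiently large cardinals exist above the target regular, rather than only to local combinatorics below it.
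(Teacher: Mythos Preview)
The statement is not a theorem but Woodin's $\HOD$ conjecture, and the paper offers no proof of it; it is quoted in the introduction purely as background, alongside the $\HOD$ dichotomy, to motivate the study of $\omega$-strongly measurable cardinals. You correctly identify it as open and correctly observe that the paper's main result pushes in the \emph{opposite} direction, producing a $\mathsf{ZFC}$ model in which $\omega_1,\omega_2,\omega_3$ are all $(\kappa\cap\mathrm{Cof}(\omega),{<}2)$-strongly measurable in $\HOD$. So there is nothing in the paper to compare your proposal against.

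Your outline via Ultimate-$L$ is a fair summary of the intended program, but it is a research plan rather than a proof, and you say as much. Step (ii)---iterability for extender models at the level of a supercompact or extendible---is indeed the central obstruction, and nothing in your proposal indicates how to surmount it; steps (iii) and (iv) likewise presuppose machinery that does not yet exist. One small correction to your step (iv): you write ``the nonexistence of an $\omega$-strongly measurable proper class of regulars,'' but the conjecture asserts the \emph{existence} of a proper class of regulars that are \emph{not} $\omega$-strongly measurable. Once Case (1) of \cref{HOD_dichotomy} is established, the conjecture follows immediately from the exclusivity of the two cases, without the additional cofinality analysis you describe.
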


    Recently, there is a remarkable progress regarding $\HOD$ conjecuture due to Ben-Neria and Hayut.

    \begin{thm}[Ben-Neria \& Hayut, \cite{omega-strongly_meas}]\label{Ben-neira--Hayut}
        It is consistent relative to an inaccessible cardinal $\theta$ for which $\{o(\kappa)\mid\kappa<\delta\}$\footnote{Here, $o(\kappa)$ denotes the Mitchell order of $\kappa$.} is unbounded in $\theta$ that all successors of regular cardinals are $\omega$-strongly measurable in $\HOD$.
    \end{thm}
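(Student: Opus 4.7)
The plan is to construct a forcing extension in which the Mitchell-order information stored at measurable cardinals below $\theta$ is compressed into a rigid combinatorial structure at successor cardinals, so that $\HOD$ of the extension cannot partition the cofinality-$\omega$ stationary sets into too many pieces. The guiding principle is that if a generic object scrambles the measures used, then $\HOD$ can only ``see'' a coarsened version of the measure sequence, and every stationary set in $\HOD$ is a union of sets from this coarsened structure.

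First, I would use the hypothesis to fix a cofinal sequence $\langle \kappa_\alpha : \alpha < \theta \rangle$ with $\sup_{\alpha < \theta} o(\kappa_\alpha) = \theta$, and pick coherent sequences of measures $\vec{U}_\alpha$ witnessing the Mitchell order at each $\kappa_\alpha$. Next, I would define an iterated (or Easton-style product of) Prikry-type forcings whose factors singularize the $\kappa_\alpha$'s and arrange that each $\kappa_\alpha^+$ in the extension is the successor of a regular cardinal. The precise forcing at each coordinate should be chosen so that the generic induces a canonical stratification of $\kappa_\alpha^+ \cap \mathrm{Cof}(\omega)$ derived from $\vec{U}_\alpha$, with the number of strata controlled by a cardinal $\eta_\alpha$ for which $(2^{\eta_\alpha})^{\HOD} < \kappa_\alpha^+$.

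The heart of the argument is the $\HOD$ computation in $V^{\mathbb{P}}$. The target statement is that any $\HOD$-partition of $\kappa^+ \cap \mathrm{Cof}(\omega)$ into disjoint stationary sets must be refinable by a partition definable from $\vec U_\alpha$ alone, which has at most $\eta_\alpha$ pieces. Such an analysis typically proceeds in three stages: (i) show that $\HOD^{V^{\mathbb{P}}}$ is included in an inner model that reads off the measure sequences but not the specific generic points; (ii) show that any candidate stationary set $S \in \HOD$, when intersected with the generic trace of $\vec U_\alpha$, behaves like a measure-one union with respect to normal measures derived from $\vec U_\alpha$; and (iii) count, inside $\HOD$, the possible ``types'' of such sets, bounding them by $\eta_\alpha$.

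The main obstacle, I expect, is doing this \emph{globally} — at successors of all regular cardinals simultaneously. For a single cardinal, a bespoke Prikry-style forcing together with a symmetry-based $\HOD$ analysis can be made to work, but a proper class of successors forces one to choose a very uniform iteration whose cofinality preservation, chain-condition behavior, and $\HOD$-hiding properties all pass through the limits of the iteration. The delicate balance is between \emph{enough homogeneity} for the generic to escape $\HOD$ and \emph{enough non-homogeneity} to retain the measure-induced strata that cap the number of stationary pieces; arranging both at every stage and verifying that neither the iteration nor the $\HOD$ analysis breaks down at limit stages of cofinality below $\theta$ is where the real work will lie.
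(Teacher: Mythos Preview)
This theorem is not proved in the paper; it is merely quoted as a result of Ben-Neria and Hayut from \cite{omega-strongly_meas}, with no argument given here. The paper uses it only as context and motivation for its own main result (\cref{MainTheorem}), which concerns a completely different construction via $\mathbb{P}_{\mathrm{max}}$ extensions of a Chang-type determinacy model. So there is no ``paper's own proof'' to compare your proposal against.

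As for the proposal itself, viewed as a sketch toward the Ben-Neria--Hayut theorem: the broad outline---iterated Prikry-type forcing indexed by the measurable cardinals below $\theta$, together with a $\HOD$ analysis showing that $\HOD$ of the extension cannot resolve too fine a partition of $\kappa^+\cap\mathrm{Cof}(\omega)$---is in the right spirit, but the sketch is too imprecise to constitute a proof. In particular, your step (ii), that any $\HOD$-stationary set ``behaves like a measure-one union'' with respect to the normal measures from $\vec{U}_\alpha$, is exactly the nontrivial point, and you have not indicated any mechanism for why it should hold. The actual argument in \cite{omega-strongly_meas} does not work by showing $\HOD$ is contained in some measure-sequence model; rather, it uses a nonstationary-support product of carefully designed ``splitting'' posets whose homogeneity forces $\HOD$ of the extension back into the ground model, where the desired partition bound is read off directly from the Mitchell-order hypothesis. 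Your proposal does not identify this homogeneity-and-ground-model-$\HOD$ mechanism, and without it the $\HOD$ computation you outline has no traction.
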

    
    It is worth noting that prior to \cite{omega-strongly_meas}, it was not even known if consistently four cardinals can be $\omega$-strongly measurable in $\HOD$ at the same time.

    Now we consider the existence of uncountable cardinals $\kappa$ that are $(\kappa\cap\mathrm{Cof}(\omega), {<}2)$-strongly measurable in $\HOD$, or equivalent the restriction of the club filter on $\kappa\cap\mathrm{Cof}(\omega)$ to $\HOD$ is an ultrafilter in $\HOD$.
    Note that this property is stronger than being $\omega$-strongly measurable in $\HOD$.
    In the model constructed in \cref{Ben-neira--Hayut}, any $\kappa>\omega_1$ is not $(\kappa\cap\mathrm{Cof}(\omega), {<}2)$-strongly measurable in $\HOD$.
    Ben Neria and Hayut wrote in \cite{omega-strongly_meas} that the referee of the paper pointed out that assuming $L(\mathbb{R})\models\mathsf{AD}$, in the $\mathbb{P}_{\mathrm{max}}$ extension of $L(\mathbb{R})$, if $\kappa$ is either $\omega_1$ or $\omega_2$, then $\kappa$ is $(\kappa\cap\mathrm{Cof}(\omega), {<}2)$-strongly measurable in $\HOD$. In this $\mathbb{P}_{\mathrm{max}}$ extension, however, $\omega_3$ is not measurable in $\HOD$.
    Ben Neria and Hayut then ask (see \cite[Question 4.11]{omega-strongly_meas}) if $\omega_3$ can be $(\omega_3\cap\mathrm{Cof}(\omega), {<}2)$-strongly measurable in $\HOD$. We give an affirmative answer to this question by establishing the following theorem.
    
    \begin{thm}\label{MainTheorem}
        It is consistent relative to a Woodin limit of Woodin cardinals that $\mathsf{ZFC}$ holds and if $\kappa\in\{\omega_1, \omega_2, \omega_3\}$, then $\kappa$ is $(\kappa\cap\mathrm{Cof}(\omega), {<}2)$-strongly measurable in $\HOD$, i.e., the restriction of the club filter on $\kappa\cap\mathrm{Cof}(\omega)$ to $\HOD$ is an ultrafilter in $\HOD$.
    \end{thm}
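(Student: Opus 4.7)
The plan is to realize the desired model as the $\mathbb{P}_{\max}$ extension of the Chang-type model $M$ from \cite{CoveringChang}, which can be constructed from a Woodin limit of Woodin cardinals and satisfies $\mathsf{AD}^+$ together with extra closure. The key feature of $M$ to exploit is that three cardinals of $M$, namely $\kappa_1 = \omega_1^M < \kappa_2 < \kappa_3 = \Theta^M$, each carry the club filter on their countable-cofinality points as a $\sigma$-complete ultrafilter on $\HOD^M$. After forcing with $\mathbb{P}_{\max}$ these three cardinals are transformed into $\omega_1^{V[G]}, \omega_2^{V[G]}, \omega_3^{V[G]}$ respectively, so the target becomes to verify that for each such $\kappa_i$, the club filter on $\kappa_i \cap \mathrm{Cof}(\omega)$ in $V[G]$ restricts to an ultrafilter on $\HOD^{V[G]}$.

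For each $\kappa_i$ the verification has two parts. First, I would show that the club filter on $\kappa_i \cap \mathrm{Cof}(\omega)$ in $V[G]$ agrees with, or is generated by, the corresponding $M$-side club filter: every stationary subset of $\kappa_i \cap \mathrm{Cof}(\omega)$ in $V[G]$ must contain a ground-model stationary set, which follows from the $\sigma$-closure of $\mathbb{P}_{\max}$ together with the standard countable-iteration reflection arguments from Woodin's $\mathbb{P}_{\max}$ framework. Second, I would show that every subset of $\kappa_i$ in $\HOD^{V[G]}$ is decided by this ultrafilter, for which one uses that $\HOD^{V[G]}$ is contained in a class definable from ordinal parameters in $M$, by homogeneity and ordinal-definability of $\mathbb{P}_{\max}$; this places every $\HOD^{V[G]}$-subset of $\kappa_i$ inside the domain of the $M$-ultrafilter. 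For $\kappa_1$ and $\kappa_2$, this is essentially the referee's argument from \cite{omega-strongly_meas}, and it goes through with only cosmetic adjustments once the basic $\mathbb{P}_{\max}$ theory is set up over $M$ rather than over $L(\mathbb{R})$.

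The decisive and most delicate case is $\kappa_3 = \Theta^M$, which becomes $\omega_3^{V[G]}$; this is where the Chang-type hypothesis on $M$, as opposed to merely $M = L(\mathbb{R})$, is essential. The main obstacle I expect is twofold. First, in $M$ one must show that the club filter on $\Theta^M \cap \mathrm{Cof}(\omega)$ is an ultrafilter on $\HOD^M$-subsets of $\Theta^M$; this is the content of a Solovay-sequence analysis inside the Chang-type model, and it is precisely here that the additional $\omega$-closure and fine structure provided by \cite{CoveringChang} are used. Second, one needs a $\HOD$-analysis of $V[G]$ in the spirit of \cite{HOD_dichotomy, suitable_extender_I}, extended to the Chang-type setting, to conclude that $\HOD^{V[G]} \cap \wp(\omega_3^{V[G]})$ is captured by the $M$-side ultrafilter and that the identification of $\Theta^M$ with $\omega_3^{V[G]}$ is correct. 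Once these two ingredients are in place, the two-part scheme from the preceding paragraph applies uniformly to all three $\kappa_i$ and yields the theorem.
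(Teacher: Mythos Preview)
Your overall architecture is correct and matches the paper's: work in a Chang-type determinacy model $W$, establish the club-filter property at $\omega_1$, $\omega_2$, and $\Theta$ there, then force and argue that $\HOD$ does not grow and that $\omega$-clubs survive. Two points, however, are genuine gaps rather than mere vagueness.

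First, forcing with $\mathbb{P}_{\max}$ alone does not produce a model of $\mathsf{ZFC}$; you must follow it with $\mathrm{Add}(\Theta,1)$, and the argument that this second step recovers full choice already requires some care (the paper uses a representation of $W$ as $L(\mathcal{M}_\infty^{\mathsf{b}}, X, \Gamma^*_g, \mathbb{R}^*_g)$ to obtain $\omega_2$-$\mathsf{DC}$ in $W[G]$). Your outline omits this. Relatedly, your two-part verification is heavier than necessary: since the combined forcing is $\sigma$-closed, ground-model $\omega$-clubs remain $\omega$-clubs in the extension, and weak homogeneity already gives $\HOD^{W[G*H]}\subset\HOD^W$; no stationary-reflection or deep $\HOD$-analysis of the extension in the style of \cite{HOD_dichotomy} is needed.

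Second, and more importantly, your description of the $\Theta$ case misses the actual obstacle and its resolution. The paper does not proceed via a Solovay-sequence analysis. Instead, one identifies $\HOD^W\|(\Theta^+)^{\HOD}$ with $\mathcal{M}_\infty\vert(\kappa_\infty^+)^{\mathcal{M}_\infty}$ and shows that the Mitchell-order-$0$ measure $\nu$ on $\kappa_\infty$ in $\mathcal{M}_\infty$ coincides with the restriction of the club filter on $\Theta\cap\mathrm{Cof}(\omega)$. The natural witnessing club---the set of $\sup\pi_{\mathcal{Q},\infty}[\kappa_{\mathcal{Q}}]$ for suitable iterates $\mathcal{Q}$---lives in $\mathcal{V}[g]$ but not in $W$, because the iteration maps $\pi_{\mathcal{Q},\infty}$ are not in $W$. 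The paper's key device to overcome this is the notion of \emph{strongly condensing sets}: countable $X\subset\mathcal{M}_\infty^{\mathsf{b}}$ for which hull maps simulate the direct-limit embeddings well enough that a club witnessing $\nu$ can be defined inside $W$ from $X$. Proving that such $X$ exist (specifically, that $\pi_{\mathcal{P},\infty}[\mathcal{P}^{\mathsf{b}}]$ is strongly condensing) is the technical heart of the argument and requires a simultaneous genericity-iteration construction; nothing in your sketch points toward this mechanism.
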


    As we have already mentioned in the first paragraph, the model of \cref{MainTheorem} is built as the $\mathbb{P}_{\mathrm{max}}$ extension of a subclass introduced in \cite{CoveringChang}.
    Our main task is to show that in the determinacy model, the restriction of the club filter on $\Theta\cap\mathrm{Cof}(\omega)$ to $\HOD$ is an ultrafilter in $\HOD$.
    To show this, we make use of condensing sets, which was originally introduced for core model induction at and beyond the level of ``$\mathsf{AD}_{\mathbb{R}}+\Theta$ is regular.''

    \subsection*{Acknowledgments}
    The first, second, and third authors thank IMPAN for hosting them in 2022 when part of this work was done during their stay.
    The second author thanks Daisuke Ikegami for giving helpful comments and finding errors when he gave a talk on this topic at Waseda Set Theory Seminar in April 2023.
    He is also grateful to Kenta Tsukuura and Toshimichi Usuba for organizing the seminar and affording him the opportunity to present his work.
    
    The second author was supported by Elise Richter grant number V844 and international  grant number I6087 of the Austrian Science Fund (FWF). The fourth author's work is funded by the National Science Center, Poland under the Weave-UNISONO
call in the Weave programme, registration number UMO-2021/03/Y/ST1/00281.
    
    \section{Analysis of a Chang-type model of determinacy}

    \subsection{Definitions and Notations}

    We choose a subclass of a determinacy model introduced in \cite{CoveringChang} as a ground model for the $\mathbb{P}_{\mathrm{max}}$ forcing. The model is constructed in a symmetric extension of a certain hod premouse. Roughly speaking, a hod premouse is a structure of the form $L_{\alpha}[\vec{E}, \Sigma]$, where $\vec{E}$ is a coherent sequence of extenders and $\Sigma$ is a fragment of its own iteration strategy. \footnote{A hod premouse is designed for representing $\mathrm{HOD}$ of a determinacy model of the form $L(\power(\mathbb{R}))$, which is why the name includes ``hod.''} A hod pair is a pair of a hod premouse and its iteration strategy with some regularity properties. In this paper, we use Steel's least branch (lbr) hod premouse, which theory is developed in \cite{CPMP}. See \cite[Definition 9.2.2]{CPMP} for the precise definition of a hod pair.

    To avoid including $\mathsf{AD}^+$ in our background theory, we need to assume more regularity of the iteration strategy in a hod pair that follows from $\mathsf{AD}^+$. According to \cite{CoveringChang}, we say that a hod pair $(\mathcal{V}, \Omega)$ is \emph{excellent} if $\mathcal{V}$ is countable, $\Omega$ is $(\omega_1, \omega_1+1)$-iteration strategy for $\mathcal{V}$, and whenever $\mathcal{P}\trianglelefteq\mathcal{V}, o(\mathcal{P})$ is an inaccessible cardinal of $\mathcal{V}$, $\rho(\mathcal{V})>o(\mathcal{P})$, and $\Sigma=\Omega_{\mathcal{P}}$, then the following hold:
    \begin{enumerate}
        \item $\Sigma$ admits full normalization, i.e., whenever $\mathcal{T}$ is an iteration tree on $\mathcal{P}$ via $\Sigma$ with last model $\mathcal{Q}$, there is a normal iteration $\mathcal{U}$ on $\mathcal{P}$ via $\Sigma$ with last model $\mathcal{Q}$ such that $\pi^{\mathcal{T}}$ exists if and only if $\pi^{\mathcal{U}}$ exists, and if $\pi^{\mathcal{T}}$ exists then $\pi^{\mathcal{T}}=\pi^{\mathcal{U}}$,
        \item $\Sigma$ is positional, i.e., if $\mathcal{Q}$ is a $\Sigma$-iterate of $\mathcal{P}$ via an iteration tree $\mathcal{T}$ and it is also via another iteration tree $\mathcal{U}$, then $\Sigma_{\mathcal{T}, \mathcal{Q}}=\Sigma_{\mathcal{U}, \mathcal{Q}}$,\footnote{We then are allowed to denote the unique tail strategy for $\mathcal{Q}$ by $\Sigma_{\mathcal{Q}}$.}
        \item $\Sigma$ is directed, i.e., if $\mathcal{Q}_0$ and $\mathcal{Q}_1$ are $\Sigma$-iterates of $\mathcal{P}$ via iteration trees above some ordinal $\eta$, then there is an $\mathcal{R}$ such that $\mathcal{R}$ is a $\Sigma_{\mathcal{Q}_i}$-iterate of $\mathcal{Q}_i$ via an iteration tree above $\eta$ for any $i\in\{0, 1\}$,
        \item $(\mathcal{P}, \Sigma)$ satisfies generic interpretability in the sense of \cite[Theorem 11.1.1]{CPMP}, and
        \item $\Sigma$ is segmentally normal, i.e., whenever $\eta$ is inaccessible cardinal of $\mathcal{P}$ such that $\rho(\mathcal{P})>\eta$, $\mathcal{Q}$ is a non-dropping $\Sigma$-iterate of $\mathcal{P}$ via an iteration tree $\mathcal{T}$ that is above $\eta$, and $\mathcal{R}$ is a non-dropping $\Sigma_{\mathcal{Q}}$-iterate of $\mathcal{Q}$ via an iteration tree $\mathcal{U}$ that is based on $\mathcal{Q}\vert\eta$, then $\Sigma_{\mathcal{P}\vert\eta}=(\Sigma_{\mathcal{Q}})_{\mathcal{P}\vert\eta}$ and letting $\mathcal{R}^*$ be a non-dropping $\Sigma$-iterate of $\mathcal{P}$ via the iteration tree $\mathcal{U}^*$ that has the same extenders and branches as $\mathcal{U}$, $\mathcal{R}$ is a non-dropping $\Sigma_{\mathcal{R}^*}$-iterate of $\mathcal{R}^*$ via a normal iteration tree that is above $\pi_{\mathcal{P}, \mathcal{R}^*}(\eta)$.
    \end{enumerate}
    Siskind and Steel showed that under $\mathsf{AD}^+$, every countable hod pair is excellent (\cite{CPMP, full_normalization}).
    Our definition of excellence has slight differences from \cite{CoveringChang}.
    First, we omit stability and pullback consistency from the definition because they are already part of the definition of a hod pair in \cite{CPMP}.
    Also, we do not mention to strongly non-dropping iteration trees, simply because it turns out that we do not have to.
    See the remark after \cref{genericity_iteration} as well.
    The consequence of excellence that the reader should be particularly aware of is that if a hod pair $(\mathcal{V}, \Omega)$ is excellent, then 
    \begin{itemize}
        \item for any $\mathcal{P}$ and $\Sigma$ as in the definition of excellence, $\Sigma$ has a canonical extension $\Sigma^g$ in $\mathcal{P}[g]$, where $g\subset\mathrm{Col}(\omega, {<}\delta)$ is $\mathcal{P}$-generic and $\delta$ is the supremum of all Woodin cardinals of $\mathcal{P}$, and
        \item internal direct limit models as defined in \cref{dfn:internal_direct_limit_system} are well-defined.
    \end{itemize}

    Now we describe our setup, which is the same as in \cite{CoveringChang}. Let $(\mathcal{V}, \Omega)$ be an excellent hod pair such that $\mathcal{V}\models\mathsf{ZFC}$. Suppose that in $\mathcal{V}$, $\delta$ is a cardinal that is a limit of Woodin cardinals and if $\delta$ is not regular, then its cofinality is not measurable. \footnote{Throughout this paper, we adopt the following standard convention: if $\mathcal{M}$ is an lbr hod premouse, then ``$\delta$ has some large cardinal property in $\mathcal{M}$'' actually means ``the extender sequence of $\mathcal{M}$ witnesses that $\delta$ has some large cardinal property in $\mathcal{M}$.''} We let $\mathcal{P}=\mathcal{V}\vert(\delta^+)^{\mathcal{V}}$ and let $\Sigma$ be the $(\omega, \delta+1)$-iteration strategy for $\mathcal{P}$ determined by the strategy predicate of $\mathcal{V}$. Also, let $g\subset\mathrm{Col}(\omega, {<}\delta)$ be $\mathcal{V}$-generic. We fix the objects defined in this paragraph throughout the paper and work in $\mathcal{V}[g]$ unless otherwise noted.

    We denote the derived model at $\delta$ (computed in $\mathcal{V}[g]$) by $\mathsf{DM}$. More precisely, let $\mathbb{R}^*_g = \bigcup_{\alpha<\delta} \mathbb{R}^{\mathcal{V}[g\uphar\alpha]}$, where $g\uphar\alpha:=g\cap\mathrm{Col}(\omega, {<}\alpha)$, and let
    \[
    \Gamma^*_g = \{A^*_g\subset\mathbb{R}^*_g\mid\exists\alpha<\delta( A\subset\mathbb{R}^{\mathcal{V}[g\uphar\alpha]}\land \mathcal{V}[g\uphar\alpha]\models\text{$A$ is ${<}\delta$-universally Baire})\}.
    \]
    Here, we write $A^*_g = \bigcup_{\beta\in(\alpha, \delta)} A^{g\uphar\beta}$, where $A^{g\uphar\beta}$ is the canonical extension of $A$ in $\mathcal{V}[g\uphar\beta]$ via its ${<}\delta$-universally Baire representation. We define $\mathsf{DM}=L(\Gamma^*_g, \mathbb{R}^*_g)$. See \cite{St09} for basic properties of $\mathsf{DM}$.

    We define $I^*_g(\mathcal{P}, \Sigma)$ as the set of all non-dropping \footnote{We say that $\mathcal{Q}$ is a non-dropping iterate of $\mathcal{P}$ via $\mathcal{T}$ if the main branch of $\mathcal{T}$ does not drop.} $\Sigma$-iterates of $\mathcal{P}$ via an $(\omega, \delta+1)$-iteration tree $\mathcal{T}$ of $\mathcal{P}$ based on $\mathcal{P}\vert\delta$ \footnote{For an iteration tree $\mathcal{T}$ on $\mathcal{P}$, we say that $\mathcal{T}$ is based on $\mathcal{P}\vert\delta$ if it only uses extenders on the extender sequence of $\mathcal{P}\vert\delta$ and their images.} such that $\pi^{\mathcal{T}}(\delta)=\delta$ and $\mathcal{T}\in\mathcal{V}[g\uphar\xi]$ for some $\xi<\delta$. Let $\mathcal{Q}\in I^*_g(\mathcal{P}, \Sigma)$. Because $\Sigma$ (and its canonical extensions to generic extensions) admits full normalization, $\mathcal{Q}$ is a non-dropping normal $\Sigma$-iterate of $\mathcal{P}$. So, let $\mathcal{T}_{\mathcal{P}, \mathcal{Q}}$ be a unique normal iteration tree of $\mathcal{P}$ via $\Sigma$ with last model $\mathcal{Q}$. Note that the length of $\mathcal{T}_{\mathcal{P}, \mathcal{Q}}$ is at most $\delta+1$. Let $\Sigma_{\mathcal{Q}}$ be the tail strategy $\Sigma_{\mathcal{Q}, \mathcal{T}_{\mathcal{P}, \mathcal{Q}}}$. Since $\Sigma$ is positional, $\Sigma_{\mathcal{Q}}=\Sigma_{\mathcal{Q}, \mathcal{U}}$ for any $\Sigma$-iteration tree $\mathcal{U}$ on $\mathcal{P}$ with last model $\mathcal{Q}$. Let $\pi_{\mathcal{P}, \mathcal{Q}}\colon\mathcal{P}\to\mathcal{Q}$ be the iteration map via $\mathcal{T}_{\mathcal{P}, \mathcal{Q}}$. Moreover, since $\mathcal{V}$ does not project across $(\delta^+)^{\mathcal{V}}$, we can apply $\mathcal{T}_{\mathcal{P}, \mathcal{Q}}$ to $\mathcal{V}$ according to $\Omega$. Then let $\mathcal{V}_{\mathcal{Q}}$ be the last model of $\mathcal{T}_{\mathcal{P}, \mathcal{Q}}$ when it is applied to $\mathcal{V}$. It is not hard to see that $\mathcal{Q}=\mathcal{V}_{\mathcal{Q}}\vert(\delta^+)^{\mathcal{V}_{\mathcal{Q}}}$ and $\Sigma_{\mathcal{Q}}$ is compatible with the strategy predicate of $\mathcal{V}_{\mathcal{Q}}$.

    \begin{dfn}\label{dfn:internal_direct_limit_system}
        For any $\mathcal{Q}\in I^*_{g}(\mathcal{P}, \Sigma)$, we define $\mathcal{F}^*_g(\mathcal{Q})$ as the set of all non-dropping $\Sigma_{\mathcal{Q}}$-iterates $\mathcal{R}$ of $\mathcal{Q}$ such that $\lh(\mathcal{T}_{\mathcal{Q}, \mathcal{R}})<\delta$, $\mathcal{T}_{\mathcal{Q}, \mathcal{R}}$ is based on $\mathcal{P}\vert\delta$, and $\mathcal{T}_{\mathcal{Q}, \mathcal{R}}\in\mathcal{V}[g\uphar\xi]$ for some $\xi<\delta$. Since $\Sigma$ is directed, $\mathcal{F}^*_g(\mathcal{Q})$ can be regarded as a direct limit system under iteration maps. We define $\mathcal{M}_{\infty}(\mathcal{Q})$ as the direct limit model of the system $\mathcal{F}^*_g(\mathcal{Q})$. For any $\mathcal{R}\in\mathcal{F}^*_g(\mathcal{Q})$, let $\pi^{\mathcal{Q}}_{\mathcal{R}, \infty}\colon\mathcal{R}\to\mathcal{M}_{\infty}(\mathcal{Q})$ be the direct limit map and let $\pi_{\mathcal{Q}, \infty}=\pi^{\mathcal{Q}}_{\mathcal{Q}, \infty}$.
        Finally, let $\delta^{\mathcal{Q}}_{\infty}=\pi_{\mathcal{Q}, \infty}(\delta)$.
    \end{dfn}

    Let $\mathcal{Q}\in I^*_g(\mathcal{P}, \Sigma)$. Since any iteration tree based on $\mathcal{Q}\vert\delta$ can be applied to $\mathcal{V}_{\mathcal{Q}}$, we can similarly define a direct limit system $\mathcal{F}^*_g(\mathcal{V}_{\mathcal{Q}})$, which consists of models $\mathcal{V}_{\mathcal{R}}$ and iteration maps $\pi_{\mathcal{V}_{\mathcal{R}}, \mathcal{V}_{\mathcal{R}^*}}$, where $\mathcal{R}, \mathcal{R}^*\in\mathcal{F}^*_g(\mathcal{Q})$ are such that $\mathcal{R}^*$ is a non-dropping iterate of $\mathcal{R}$. It is not hard to see that $\mathcal{V}_{\mathcal{M}_{\infty}(\mathcal{Q})}$ is the direct limit model of $\mathcal{F}^*_g(\mathcal{V}_{\mathcal{Q}})$. For any $\mathcal{R}\in\mathcal{F}^*_g(\mathcal{V}_{\mathcal{R}})$, let $\pi^{\mathcal{Q}}_{\mathcal{V}_{\mathcal{R}}, \infty}\colon\mathcal{V}_{\mathcal{R}}\to\mathcal{V}_{\mathcal{M}_{\infty}(\mathcal{Q})}$ be the corresponding direct limit map which extends $\pi^{\mathcal{Q}}_{\mathcal{R}, \infty}\colon\mathcal{R}\to\mathcal{M}_{\infty}(\mathcal{Q})$.

    \begin{dfn}
        The Chang model over the derived model (at $\delta$ computed in $\mathcal{V}[g]$) is defined by
        \[
        \mathsf{CDM}=L(\mathcal{M}_{\infty}, \cup_{\xi<\delta_{\infty}}{}^{\omega}\xi, \Gamma^*_g, \mathbb{R}^*_g),\footnotemark
        \]
        where $\mathcal{M}_{\infty}=\mathcal{M}_{\infty}(\mathcal{P})$ and $\delta_{\infty}=\delta^{\mathcal{P}}_{\infty}$. \footnotetext{In \cite{CoveringChang}, this model is denoted by $C(g)$.}
        We also define
        \[
        \mathsf{CDM}^{-}=L(\mathcal{M}_{\infty}\vert(\Theta^+)^{\mathcal{M}_{\infty}}, \power_{\omega_1}(\mathcal{M}_{\infty}\vert(\Theta^+)^{\mathcal{M}_{\infty}}), \Gamma^*_g, \mathbb{R}^*_g),
        \]
        where $\Theta=\Theta^{\mathsf{CDM}}$.
        \footnote{In general, $\Theta^{\mathsf{CDM}}\leq\delta_{\infty}$ holds, but we will see that $\Theta^{\mathsf{CDM}}<\delta_{\infty}$ under the assumption of \cref{main_thm_on_CDM}. Also, by \cref{sets_of_reals_in_CDM}, $\Theta^{\mathsf{CDM}}=\Theta^{\mathsf{CDM}^-}$.}
    \end{dfn}

    Clearly, $\mathsf{DM}\subset\mathsf{CDM}^{-}\subset\mathsf{CDM}\subset\mathcal{V}[g]$. We choose $\mathsf{CDM}^{-}$ rather than $\mathsf{CDM}$ as a ground model for the $\mathbb{P}_{\mathrm{max}}$ extension.
    We will show the following theorem in the rest of this section.
    
    \begin{thm}\label{main_thm_on_CDM}
        Suppose that $(\mathcal{V}, \Omega)$ is an excellent hod pair and $\kappa<\delta$ are cardinals of $\mathcal{V}$ such that
        \begin{align*}
        \mathcal{V}\models\mathsf{ZFC} & +\delta\text{ is a regular limit of Woodin cardinals}\\
        & + \kappa\text{ is the least ${<}\delta$-strong cardinal}.
        \end{align*}
        Let $g\subset\mathrm{Col}(\omega, {<}\delta)$ be $\mathcal{V}$-generic.
        Then $\mathsf{CDM}^{-}$ (computed in $\mathcal{V}[g]$) satisfies the following:
        \begin{enumerate}
            \item $\mathsf{AD}^{+}+\mathsf{AD}_{\mathbb{R}}+\mathsf{DC}+\Theta$ is regular.
            \item Let $\kappa_{\infty}=\pi_{\mathcal{P}, \infty}(\kappa)$. Then $\Theta=\kappa_{\infty}$ and $\HOD\|(\Theta^+)^{\HOD}=\mathcal{M}_{\infty}\vert(\kappa_{\infty}^+)^{\mathcal{M}_{\infty}}$.\footnote{For any transitive model $M$ of $\mathsf{ZF}$ without fine structure and any $\alpha\in \ord\cap M$, we write $M\|\alpha$ for $V_{\alpha}^{M}$.}
            \item The restriction of the club filter on $\Theta\cap\mathrm{Cof}(\omega)$ to $\HOD$ is a normal $\Theta$-complete ultrafilter in $\HOD$.
        \end{enumerate}
    \end{thm}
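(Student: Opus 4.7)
The plan is to obtain parts (1) and (2) by adapting now-standard derived-model and $\HOD$-analysis arguments, and to spend the real effort on part (3). For (1), Woodin's derived model theorem applied at the Woodin-limit-of-Woodins $\delta$ delivers $\mathsf{DM}\models\mathsf{AD}^{+}+\mathsf{AD}_{\mathbb{R}}+\mathsf{DC}$, and these axioms lift to $\mathsf{CDM}^{-}$ because $\mathcal{M}_{\infty}\|(\Theta^{+})^{\mathcal{M}_{\infty}}$ and its countable subsets are definable from $\Gamma^{*}_{g}$ together with the reals of $\mathsf{DM}$, so that the usual Wadge/determinacy transfer goes through. The regularity of $\Theta$ in $\mathsf{CDM}^{-}$ is the standard consequence of the ${<}\delta$-strongness of $\kappa$ in $\mathcal{V}$, by arguments analogous to those in \cite{CoveringChang}: ${<}\delta$-strongness is preserved by the maps of $\mathcal{F}^{*}_{g}(\mathcal{P})$, giving enough closure of $\kappa_{\infty}$ in $\mathcal{M}_{\infty}$ to rule out cofinal countable sequences definable in $\mathsf{CDM}^{-}$.

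For (2), I would run the Vopěnka-style $\HOD$ analysis for Chang-type models as set up in \cite{CoveringChang}. The internal direct-limit system $\mathcal{F}^{*}_{g}(\mathcal{P})$ and its iteration maps are $\HOD^{\mathsf{CDM}^{-}}$-definable since their codes are in $\Gamma^{*}_{g}$, so $\mathcal{M}_{\infty}$ together with the fragment of $\Sigma_{\mathcal{M}_{\infty}}$ needed to read off its extender sequence below $(\kappa_{\infty}^{+})^{\mathcal{M}_{\infty}}$ sits inside $\HOD$; conversely, every bounded subset of $\Theta$ in $\HOD$ is recovered from this system by the usual argument. The choice of $\kappa$ as the \emph{least} ${<}\delta$-strong cardinal is what pins down $\Theta=\kappa_{\infty}$: above $\kappa_{\infty}$ new Wadge-cofinal sets of reals appear in $\mathsf{CDM}^{-}$, while below they do not.

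For (3), the main contribution, my plan is as follows. Fix $S\in\HOD$ with $S\subset\Theta\cap\mathrm{Cof}(\omega)$. By (2) $S$ corresponds to a set $\bar{S}\subset\kappa_{\infty}$ inside $\mathcal{M}_{\infty}$, and commutativity of the direct-limit maps yields, for each $\mathcal{Q}\in I^{*}_{g}(\mathcal{P},\Sigma)$, a pullback $\bar{S}_{\mathcal{Q}}=(\pi^{\mathcal{Q}}_{\mathcal{Q},\infty})^{-1}[\bar{S}]\subset\pi_{\mathcal{P},\mathcal{Q}}(\kappa)$ lying in $\mathcal{Q}$. Since $\pi_{\mathcal{P},\mathcal{Q}}(\kappa)$ is still the least ${<}\delta$-strong (in particular measurable) cardinal of $\mathcal{Q}$, there is a canonical normal measure $U_{\mathcal{Q}}$ on it, and positionality together with directedness of $\Sigma$ force the truth value of ``$\bar{S}_{\mathcal{Q}}\in U_{\mathcal{Q}}$'' to stabilize on a cofinal subfamily of iterates. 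Replacing $S$ with its complement if necessary, assume this stable value is true. I then use condensing sets to manufacture a club in $\mathsf{CDM}^{-}$ contained in $S$: for each sufficiently closed countable $\sigma$ with the required condensation property, the transitive collapse of $\sigma$ identifies a $\Sigma$-iterate $\mathcal{Q}_{\sigma}$ realized inside $\sigma$, and the condensation machinery arranges $\mathrm{ot}(\sigma\cap\Theta)\in\bar{S}_{\mathcal{Q}_{\sigma}}$, hence $\mathrm{ot}(\sigma\cap\Theta)\in S$. These order-types trace out a club in $\Theta$ contained in $S$. Normality and $\Theta$-completeness of the resulting ultrafilter then follow from the regularity of $\Theta$ via a standard pressing-down argument once ultrafilterness is in hand.

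The hard part will be producing the condensing sets with the precise genericity and coherence needed. Concretely, one must exhibit a $\mathsf{CDM}^{-}$-stationary class of countable $\sigma\prec H^{\mathsf{CDM}^{-}}_{\theta}$ whose transitive collapses genuinely realize as $\Sigma$-iterates of $\mathcal{P}$ inside themselves, and must verify that the assignment $\sigma\mapsto\mathcal{Q}_{\sigma}$ is coherent enough that the predicate ``$\bar{S}_{\mathcal{Q}_{\sigma}}\in U_{\mathcal{Q}_{\sigma}}$'' depends, modulo a club, only on $\mathrm{ot}(\sigma\cap\Theta)$. This is where excellence of $(\mathcal{V},\Omega)$ is used to its full extent: full normalization to compare trees coming from different hulls, positionality and directedness to make the pullbacks commute, segmental normality to handle tree fragments above varying critical levels, and generic interpretability to run the whole construction in $\mathcal{V}[g]$, together with the regularity of $\Theta$ from (1), which is what synchronizes the countable closure of hulls with the direct-limit system.
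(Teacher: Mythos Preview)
Your outline for (1) and (2) is roughly right in spirit, though your mechanism for the regularity of $\Theta$ is misattributed: the paper's argument (following \cite{dm_of_self_it}) does not use the ${<}\delta$-strongness of $\kappa$ at all. One first shows $\cf(\Theta)>\omega$ using that $\{\mathrm{Code}(\Sigma^g_{\mathcal{P}\vert\xi}):\xi<\delta\}$ is Wadge-cofinal and $\delta$ is regular, then derives a contradiction from any cofinal $f\colon\mathbb{R}^*_g\to\Theta$ by stabilizing its defining parameters along a genericity iterate and comparing Wadge ranks. The cardinal $\kappa$ enters only at part (2).

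The genuine gap is in part (3). You invoke condensing sets but describe them as producing countable hulls $\sigma$ whose transitive collapses ``genuinely realize as $\Sigma$-iterates of $\mathcal{P}$ inside themselves.'' This is exactly what does \emph{not} happen, and is the entire reason condensing sets are needed. The club $\{\sup\pi_{\mathcal{Q},\infty}[\kappa_{\mathcal{Q}}]:\mathcal{Q}\in\mathcal{F}^*_g(\mathcal{P})\}$ of suprema of iteration images does witness the equivalence $A\in\nu\iff A$ contains an $\omega$-club, but only in $\mathcal{V}[g]$; it is invisible in $\mathsf{CDM}^-$ because the iteration maps $\pi_{\mathcal{Q},\infty}$ are not there. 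The paper replaces this by the club $\{\alpha_Y: Y\text{ extends a fixed strongly condensing }X\}$, where $\mathcal{Q}_Y=\cHull^{\mathcal{M}_\infty^{\mathsf b}}(Y)$ is in general \emph{not} a $\Sigma$-iterate of anything. What the condensing property supplies is an elementary $k^X_Y\colon\mathcal{Q}_Y\to\mathcal{M}_\infty^{\mathsf b}$ agreeing with the direct-limit map $\pi^{\Psi_Y}_{\mathcal{Q}_Y,\infty}$ below $\kappa_Y$ while satisfying $k^X_Y\circ\tau_{X,Y}=\tau_X$; this lets one run the key ultrapower computation (take $Z$ with $\mathcal{Q}_Z=\ult(\mathcal{Q}_Y,\nu_Y)$ and show $k^Y_Z(\kappa_Y)=\alpha_Y$) without ever appealing to $\Sigma$. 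Since the extensions $Y$ of $X$ and the ordinals $\alpha_Y=\sup\tau_Y[\kappa_Y]$ are definable in $\mathsf{CDM}^-$ from $X$ and $\mathcal{M}_\infty^{\mathsf b}$ alone, the resulting club lives in $\mathsf{CDM}^-$. Your plan as written requires the hull-collapses to be iterates, which fails; and your stabilization of ``$\bar S_{\mathcal{Q}}\in U_{\mathcal{Q}}$'' along genuine iterates, while correct in $\mathcal{V}[g]$, does not by itself transfer the club into $\mathsf{CDM}^-$.
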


    Our proof shows that \cref{main_thm_on_CDM} is also true for $\mathsf{CDM}$, but we do not need this fact for the proof of \cref{MainTheorem}.
    The fourth author recently showed that the consistency strength of the assumption of \cref{main_thm_on_CDM} is strictly weaker than a Woodin limit of Woodin cardinals.
    
    \subsection{Lemmas from \cite{CoveringChang}}

    We summarize the results of \cite{CoveringChang} in this subsection.
    We assume that $(\mathcal{V}, \Omega)$ is an excellent hod pair with $\mathcal{V}\models\mathsf{ZFC}$ and that in $\mathcal{V}$, $\delta$ is a cardinal that is limit of Woodin cardinals and if $\delta$ is not regular, then its cofinality is not measurable.
    
    Let $\mathcal{M}$ be an lbr hod premouse. Then we say that an open interval of ordinals $(\eta, \delta)$ is a \emph{window of $\mathcal{M}$} if in $\mathcal{M}$, $\eta$ is an inaccessible cardinal and $\delta$ is the least Woodin cardinal above $\eta$ in $\mathcal{M}$. For any iteration tree $\mathcal{T}$ on $\mathcal{M}$, we say that \emph{$\mathcal{T}$ is based on a window $(\eta, \delta)$} if it is based on $\mathcal{M}\vert\delta$ and is above $\eta$, i.e., $\mathcal{T}$ uses only extenders on the extender sequence of $\mathcal{M}\vert\delta$ with critical point $>\eta$ and their images. Also, a sequence $\langle w_{\alpha}\mid\alpha<\lambda\rangle$ of windows of $\mathcal{M}$ is \emph{increasing} if whenever $\alpha<\beta$, $\sup(w_{\alpha})\leq\inf(w_{\beta})$.

    \begin{dfn}\label{window-based_iteration}
        Let $\mathcal{Q}\in I^*_g(\mathcal{P}, \Sigma)$ and let $\mathcal{R}\in I^*_g(\mathcal{Q}, \Sigma_{\mathcal{Q}})$. We say that $\mathcal{R}$ is a \emph{window-based iterate of $\mathcal{Q}$} if there is an $\xi<\delta$ such that $\mathcal{R}\in\mathcal{V}[g\uphar\xi]$, an increasing sequence of windows $\langle w_{\alpha}\mid\alpha<\cf(\delta)\rangle$ of $\mathcal{R}$ and a sequence $\langle\mathcal{R}_{\alpha}\mid\alpha\leq\cf(\delta)\rangle$ of lbr hod premice in $\mathcal{V}[g\uphar\xi]$ such that
        \begin{enumerate}
            \item $\delta=\sup\{\sup(w_{\alpha})\mid\alpha<\cf(\delta)\}$.
            \item $\mathcal{R}_0$ is a non-dropping iterate of $\mathcal{Q}$ based on $\mathcal{Q}\vert\inf(w_0)$.
            \item $\mathcal{R}_{\alpha+1}$ is a non-dropping iterate of $\mathcal{R}_{\alpha}$ based on a window $\pi_{\mathcal{Q}, \mathcal{R}_{\alpha}}(w_{\alpha})$.
            \item for any limit ordinal $\lambda\leq\cf(\delta)$, $\mathcal{R}_{\lambda}$ is the direct limit of $\langle\mathcal{R}_{\alpha}, \pi_{\mathcal{R}_{\alpha}, \mathcal{R}_{\beta}}\mid\alpha<\beta<\lambda\rangle$.
            \item $\mathcal{R}=\mathcal{R}_{\cf(\delta)}$.
        \end{enumerate}
    \end{dfn}

    Let $\mathcal{M}$ be an lbr hod premouse. An extender $E\in\vec{E}^{\mathcal{M}}$ is called \emph{nice} if the supremum of the generators of $E$ is an inaccessible cardinal in $\mathcal{M}$. For any window $(\eta, \delta)$ of $\mathcal{R}$, let $\mathsf{EA}^{\mathcal{M}}_(\eta, \delta)$ be Woodin's extender algebra with $\omega$ generators at $\delta$ in $\mathcal{M}$ that only uses nice extenders $E\in\vec{E}^{\mathcal{M}\vert\eta}$ such that $\crit(E)>\eta$, see \cite{Farah} and \cite{OIMT}.

    \begin{dfn}\label{genericity_iteration}
        Let $\mathcal{Q}\in I^*_g(\mathcal{P}, \Sigma)$ and let $\mathcal{R}\in I^*_g(\mathcal{Q}, \Sigma_{\mathcal{Q}})$. We say that $\mathcal{R}$ is a \emph{genericity iterate of $\mathcal{Q}$} if it is a window-based iterate of $\mathcal{Q}$ as witnessed by $\langle w_{\alpha}\mid\alpha<\cf(\delta)\rangle$ and $\langle\mathcal{R}_{\alpha}\mid\alpha\leq\cf(\delta)\rangle$ such that
        \begin{enumerate}
            \item for any $x\in\mathbb{R}^{\mathcal{P}[g]}$, there is an $\alpha<\delta$ such that $x$ is $\mathsf{EA}^{\mathcal{R}}_{\pi_{\mathcal{Q}, \mathcal{R}}(w_{\alpha})}$-generic over $\mathcal{R}$.
            \item for any $\alpha<\cf(\delta)$, $w_{\alpha}\in\mathrm{ran}(\pi_{\mathcal{Q}, \mathcal{R}})$.
        \end{enumerate}
        We say that $\mathcal{R}$ is a \emph{genericity iterate of $\mathcal{Q}$ above $\eta$} if it is a genericity iterate of $\mathcal{Q}$ witnessed by $\langle w_{\alpha}\mid\alpha<\cf(\delta)\rangle$ and $\langle\mathcal{R}_{\alpha}\mid\alpha\leq\cf(\delta)\rangle$ such that $\inf(w_0)\geq\eta$.
    \end{dfn}

    In \cite{CoveringChang}, a genericity iteration is required to be strongly non-dropping, or use only nice extenders. This condition is actually redundant, so we omit it from \cref{genericity_iteration}.

    \begin{lem}[{\cite[Propositions 3.3 and 3.4]{CoveringChang}}]\label{easy_lemma}\leavevmode
    \begin{enumerate}
        \item For any $\mathcal{P}^*\in\mathcal{F}^*_{g}(\mathcal{P})$ and any $\eta<\delta$, there is a genericity iterate $\mathcal{Q}$ of $\mathcal{P}$ and $\crit(\pi_{\mathcal{P}^*, \mathcal{Q}})>\eta$, and $\mathcal{T}_{\mathcal{P}, \mathcal{P}^*}{}^{\frown}\mathcal{T}_{\mathcal{P}^*, \mathcal{Q}}$ is a normal iteration tree.
        \item If $\mathcal{Q}$ is a genericity iterate of $\mathcal{P}$ and $\mathcal{R}$ is a genericty iterate of $\mathcal{Q}$, then $\mathcal{R}$ is a genericity iterate of $\mathcal{P}$.
    \end{enumerate}
    \end{lem}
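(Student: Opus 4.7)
For part (1), the plan is to execute Woodin's genericity iteration in the windowed form starting from $\mathcal{P}^*$. First I would enumerate the reals of $\mathcal{P}[g]$ as $\langle x_\alpha : \alpha < \cf(\delta)\rangle$, using that $\mathbb{R}^{\mathcal{P}[g]} = \bigcup_{\xi<\delta}\mathbb{R}^{\mathcal{P}[g\uphar\xi]}$ is distributed along a cofinal sequence of Col-stages below $\delta$. Next I would choose, in $\mathcal{P}$, an increasing cofinal sequence of windows $\langle \bar{w}_\alpha : \alpha < \cf(\delta)\rangle$ with $\inf(\bar{w}_0)$ strictly above $\eta$ and above the supremum of $\lh(E)$ over extenders $E$ appearing in $\mathcal{T}_{\mathcal{P}, \mathcal{P}^*}$; this is possible because $\lh(\mathcal{T}_{\mathcal{P}, \mathcal{P}^*}) < \delta$ by definition of $\mathcal{F}^*_g(\mathcal{P})$ and $\delta$ is a limit of Woodin cardinals. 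I would then build $\langle \mathcal{R}_\alpha : \alpha \leq \cf(\delta)\rangle$ with $\mathcal{R}_0 = \mathcal{P}^*$, at successor stages applying the extender-algebra genericity theorem inside the window $\pi_{\mathcal{P}, \mathcal{R}_\alpha}(\bar{w}_\alpha)$ to make $x_\alpha$ generic, and taking direct limits at limits. Setting $\mathcal{Q} = \mathcal{R}_{\cf(\delta)}$, placing the first window above $\eta$ forces $\crit(\pi_{\mathcal{P}^*, \mathcal{Q}}) > \eta$, while placing it above the lengths used in $\mathcal{T}_{\mathcal{P}, \mathcal{P}^*}$ forces normality of the concatenated tree.

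For part (2), the plan is to verify that the composition of the two given iterations witnesses $\mathcal{R}$ as a genericity iterate of $\mathcal{P}$. Let $\langle w_\alpha\rangle, \langle \mathcal{R}_\alpha\rangle$ witness $\mathcal{Q}$ as a genericity iterate of $\mathcal{P}$ and $\langle w'_\beta\rangle, \langle \mathcal{R}'_\beta\rangle$ witness $\mathcal{R}$ as a genericity iterate of $\mathcal{Q}$. Positionality of $\Sigma$ provides the canonical normal tree $\mathcal{T}_{\mathcal{P}, \mathcal{R}}$ realizing $\mathcal{R}$ as a non-dropping $\Sigma$-iterate of $\mathcal{P}$. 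For the new window sequence of $\mathcal{R}$ I would merge $\langle\pi_{\mathcal{Q},\mathcal{R}}(w_\alpha)\rangle$ with $\langle w'_\beta\rangle$ ordered by height. The genericity clause of \cref{genericity_iteration} then holds automatically: every $x \in \mathbb{R}^{\mathcal{P}[g]} = \mathbb{R}^{\mathcal{Q}[g]}$ is already generic over $\mathcal{Q}$ for some $\mathsf{EA}^{\mathcal{Q}}_{\pi_{\mathcal{P},\mathcal{Q}}(\bar{w}_\alpha)}$, and this genericity is preserved under $\pi_{\mathcal{Q},\mathcal{R}}$ because the extender algebra commutes with iteration. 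Membership in $\mathrm{ran}(\pi_{\mathcal{P}, \mathcal{R}})$ is automatic for the first family since $w_\alpha \in \mathrm{ran}(\pi_{\mathcal{P}, \mathcal{Q}})$; for the second family I would, using part (1) applied to $\mathcal{Q}$, first re-witness $\mathcal{R}$ as a genericity iterate of $\mathcal{Q}$ with windows whose preimages lie in $\mathrm{ran}(\pi_{\mathcal{P}, \mathcal{Q}})$.

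The main technical obstacle is the length constraint in \cref{window-based_iteration}: the natural merged window sequence for $\mathcal{R}$ has order type up to $\cf(\delta) + \cf(\delta)$, while the definition demands length exactly $\cf(\delta)$. Since $\cf(\delta)$ is regular, the merged sequence has cofinality $\cf(\delta)$ and admits a cofinal subsequence of length $\cf(\delta)$; intermediate window-steps of the concatenation must then be absorbed into the step indexed by the next chosen ordinal, and one must check that clause (3) of \cref{window-based_iteration} remains satisfied under this absorption. I expect this to be routine once the two witness sequences are arranged to interleave densely by height, which is the whole point of re-witnessing the second iteration via part (1).
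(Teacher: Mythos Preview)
The paper does not prove this lemma; it is quoted from \cite{CoveringChang} without argument, so there is no in-paper proof to compare against. I evaluate your sketch on its own merits.

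Your outline for (1) is the standard construction and is essentially correct (with the minor caveat that when $\delta$ is singular you must enumerate the reals in $\cf(\delta)$ many blocks coming from a cofinal sequence of $\mathrm{Col}$-stages rather than one real per stage).

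For (2) there is a real gap. The obstacle you flag---the merged window sequence having order type $\cf(\delta)+\cf(\delta)$---is secondary. The primary difficulty is producing the \emph{model} sequence required by clause (3) of \cref{window-based_iteration}. Each of the two given iterations sweeps through the full height below $\delta$, so the concatenated stack $\mathcal{T}_{\mathcal{P},\mathcal{Q}}{}^{\frown}\mathcal{T}_{\mathcal{Q},\mathcal{R}}$ runs up to $\delta$ and then starts over near the bottom; merely listing merged windows by height does not tell you which intermediate models $\mathcal{S}_\gamma$ to take, nor why the step $\mathcal{S}_\gamma\to\mathcal{S}_{\gamma+1}$ lives in a single window. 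The tool that does this is \emph{segmental normality} (clause (5) in the definition of excellence), which is exactly the property allowing one to commute an iteration above a level $\eta$ with a subsequent iteration below $\eta$, and hence to interleave the two window-based iterations into one. You invoke positionality and, implicitly, full normalization, but neither by itself yields the window-by-window factoring of the normalized tree.

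Your proposed use of part (1) to ``re-witness $\mathcal{R}$ as a genericity iterate of $\mathcal{Q}$'' also does not work as stated: part (1) manufactures a \emph{new} genericity iterate with prescribed features; it says nothing about alternative witnesses for the fixed $\mathcal{R}$ you are handed. Since (2) is a statement about that particular $\mathcal{R}$, you are not free to replace it.
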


    \begin{lem}[{\cite[Theorem 3.8]{CoveringChang}}]\label{pres_of_M_infty}
        For any genericity iterate $\mathcal{Q}$ of $\mathcal{P}$,
        \[
        \mathcal{M}_{\infty}(\mathcal{P})=\mathcal{M}_{\infty}(\mathcal{Q}).
        \]
        Moreover, $\pi_{\mathcal{P}, \infty}=\pi_{\mathcal{Q}, \infty}\circ\pi_{\mathcal{P}, \mathcal{Q}}$. In particular, $\delta^{\mathcal{P}}_{\infty}=\delta_{\infty}^{\mathcal{Q}}$.
    \end{lem}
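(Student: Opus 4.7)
The plan is to identify $\mathcal{M}_\infty(\mathcal{P})$ and $\mathcal{M}_\infty(\mathcal{Q})$ by showing that the two direct limit systems $\mathcal{F}^*_g(\mathcal{P})$ and $\mathcal{F}^*_g(\mathcal{Q})$ have canonical mutual refinements, after identifying members of $\mathcal{F}^*_g(\mathcal{Q})$ with their avatars as $\Sigma$-iterates of $\mathcal{P}$ via the composed tree $\mathcal{T}_{\mathcal{P},\mathcal{Q}}\frown\mathcal{T}_{\mathcal{Q},\mathcal{R}}$. The equality of direct limits plus the commutation relation $\pi_{\mathcal{P},\infty}=\pi_{\mathcal{Q},\infty}\circ\pi_{\mathcal{P},\mathcal{Q}}$ will then follow from positionality of $\Sigma$, and the statement $\delta^{\mathcal{P}}_\infty=\delta^{\mathcal{Q}}_\infty$ is a special case.

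Let $\langle w_\alpha\mid\alpha<\cf(\delta)\rangle$ and $\langle\mathcal{R}_\alpha\mid\alpha\leq\cf(\delta)\rangle$ witness that $\mathcal{Q}=\mathcal{R}_{\cf(\delta)}$ is a genericity iterate of $\mathcal{P}$. For the inclusion $\mathcal{M}_\infty(\mathcal{Q})\hookrightarrow\mathcal{M}_\infty(\mathcal{P})$, I would take $\mathcal{R}\in\mathcal{F}^*_g(\mathcal{Q})$ and observe that $\mathcal{T}_{\mathcal{Q},\mathcal{R}}$, being short and based on $\mathcal{P}\vert\delta$, uses only extenders from some $\mathcal{Q}\vert\eta$ with $\eta<\delta$. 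Choosing $\alpha<\cf(\delta)$ with $\inf(w_\alpha)>\eta$ forces the critical points of every subsequent window-based step $\mathcal{R}_\beta\to\mathcal{R}_{\beta+1}$ (for $\beta\geq\alpha$) to lie strictly above $\eta$, so $\mathcal{R}_\alpha\vert\eta=\mathcal{Q}\vert\eta$. Segmental normality of $\Sigma$ (property~(5) of excellence) then guarantees that the tail strategies agree on this initial segment, so $\mathcal{T}_{\mathcal{Q},\mathcal{R}}$ reinterprets as a $\Sigma_{\mathcal{R}_\alpha}$-iteration tree on $\mathcal{R}_\alpha$ with last model $\mathcal{R}^*\in\mathcal{F}^*_g(\mathcal{P})$, and positionality makes the diagram of iteration maps commute on the relevant content, allowing me to send $\pi^{\mathcal{Q}}_{\mathcal{R},\infty}(z)$ to $\pi^{\mathcal{P}}_{\mathcal{R}^*,\infty}(\bar z)$ for the appropriate pullback.

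For the reverse inclusion, given $\mathcal{R}'\in\mathcal{F}^*_g(\mathcal{P})$ with $\mathcal{T}_{\mathcal{P},\mathcal{R}'}$ using only ordinals below some $\eta'<\delta$, I would apply \cref{easy_lemma}~(1) to extend $\mathcal{R}'$ to a genericity iterate of $\mathcal{P}$, then use \cref{easy_lemma}~(2) together with directedness of $\Sigma$ (applied to trees above $\inf(w_\alpha)$ for $\alpha$ chosen large enough that $\eta'<\inf(w_\alpha)$) to produce a common $\Sigma$-iterate $\mathcal{S}$ that simultaneously lies in $\mathcal{F}^*_g(\mathcal{Q})$. Segmental normality is again used to transport the short tree on $\mathcal{P}$ to a short tree on $\mathcal{Q}$ sharing its final model. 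This yields the reverse map, and a routine verification using positionality shows that the two constructed maps compose to the identity on the respective direct limits.

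The main obstacle is the length bookkeeping in both directions: one must ensure that the common iterates actually land in the short families $\mathcal{F}^*_g(\mathcal{P})$ and $\mathcal{F}^*_g(\mathcal{Q})$, not merely in $I^*_g(\mathcal{P},\Sigma)$, which is what forces simultaneous use of full normalization, segmental normality, and directedness of $\Sigma$. Once these coherences are in place, the factorization $\pi_{\mathcal{P},\infty}=\pi_{\mathcal{Q},\infty}\circ\pi_{\mathcal{P},\mathcal{Q}}$ falls out of positionality applied to the composed tree $\mathcal{T}_{\mathcal{P},\mathcal{Q}}\frown\mathcal{T}_{\mathcal{Q},\mathcal{R}}$ for any $\mathcal{R}\in\mathcal{F}^*_g(\mathcal{Q})$, and evaluating at $\delta$ yields the equality $\delta^{\mathcal{P}}_\infty=\delta^{\mathcal{Q}}_\infty$.
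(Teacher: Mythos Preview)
The paper does not prove this lemma; it is quoted as \cite[Theorem~3.8]{CoveringChang} with no argument given, so there is nothing here to compare your sketch against. Your outline is nonetheless along the expected lines: the clauses of excellence (full normalization, positionality, directedness, segmental normality) are tailored precisely so that short trees can be transported between $\mathcal{P}$ and its genericity iterate $\mathcal{Q}$ via the intermediate window-based models $\mathcal{R}_\alpha$, and your use of these intermediates to exhibit mutual cofinality of $\mathcal{F}^*_g(\mathcal{P})$ and $\mathcal{F}^*_g(\mathcal{Q})$ is the natural move. One point to tighten: when you reinterpret $\mathcal{T}_{\mathcal{Q},\mathcal{R}}$ on $\mathcal{R}_\alpha$, you should also verify that the resulting tree, together with the tree $\mathcal{T}_{\mathcal{P},\mathcal{R}_\alpha}$, really yields an element of $\mathcal{F}^*_g(\mathcal{P})$ (i.e.\ that the concatenated tree has length ${<}\delta$ and lies in some $\mathcal{V}[g\uphar\xi]$), which uses that $\alpha<\cf(\delta)$ and that each window-based step is bounded; this is the ``length bookkeeping'' you flag as the main obstacle, and it does need to be checked explicitly rather than just asserted.
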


    Now let $\mathcal{Q}$ be a genericity iterate of $\mathcal{P}$. Then there is a $\mathcal{Q}$-generic $h\subset\mathrm{Col}(\omega, {<}\delta)$ (in $\mathcal{V}[g]$) such that $(\mathbb{R}^*_g)^{\mathcal{P}[g]}=(\mathbb{R}^*_h)^{\mathcal{Q}[h]}$. We call such an $h$ \emph{maximal}.

    \begin{lem}[{\cite[Proposition 4.2]{CoveringChang}}]\label{pres_of_CDM}
        Let $\mathcal{Q}$ be a genericity iterate of $\mathcal{P}$ and let $h\subset\mathrm{Col}(\omega, <\delta)$ be a maximal $\mathcal{Q}$-generic such that $h\in\mathcal{V}[g]$. Then
        \[
        \mathsf{CDM}=\mathsf{CDM}^{\mathcal{V}_{\mathcal{Q}}[h]}\text{ and } \mathsf{CDM}^{-}=(\mathsf{CDM}^{-})^{\mathcal{V}_{\mathcal{Q}}[h]}.
        \]
    \end{lem}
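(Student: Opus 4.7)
The plan is to exhibit both $\mathsf{CDM}$ and $\mathsf{CDM}^{-}$ as $L$-constructions whose defining parameters, when computed in $\mathcal{V}[g]$ versus in $\mathcal{V}_{\mathcal{Q}}[h]$, match term-by-term. Observe first that $\mathcal{V}_{\mathcal{Q}}\in\mathcal{V}[g]$, since $\mathcal{T}_{\mathcal{P},\mathcal{Q}}$ can be applied to $\mathcal{V}$ inside $\mathcal{V}[g]$; hence $\mathcal{V}_{\mathcal{Q}}[h]\subseteq\mathcal{V}[g]$, which makes the $\supseteq$ direction of each desired equality automatic once the parameters are matched. The parameters to match are: the hod-mouse datum $\mathcal{M}_{\infty}$ (respectively $\mathcal{M}_{\infty}\vert(\Theta^+)^{\mathcal{M}_{\infty}}$), preserved by \cref{pres_of_M_infty}; the symmetric reals $\mathbb{R}^*_g$, which equal $\mathbb{R}^{*,\mathcal{V}_{\mathcal{Q}}[h]}_h$ by the choice of $h$ as maximal; the pointclass $\Gamma^*_g$; and the Chang-type datum $\bigcup_{\xi<\delta_{\infty}}{}^{\omega}\xi$ (respectively $\power_{\omega_1}(\mathcal{M}_{\infty}\vert(\Theta^+)^{\mathcal{M}_{\infty}})$).

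For the equality $\Gamma^*_g=\Gamma^{*,\mathcal{V}_{\mathcal{Q}}[h]}_h$, given $A\in\Gamma^*_g$ witnessed by a ${<}\delta$-universally Baire set $A_0\in\mathcal{V}[g\uphar\alpha]$ with tree pair $(T,U)$, I would apply the iteration map $\pi_{\mathcal{V},\mathcal{V}_{\mathcal{Q}}}$ to $(T,U)$ to produce trees in $\mathcal{V}_{\mathcal{Q}}$ that witness the ${<}\delta$-universal Baireness of the same set on $\mathbb{R}^*_g=\mathbb{R}^*_h$, giving $A^*_g=A^*_h\in\Gamma^*_h$. The reverse direction is symmetric: any ${<}\delta$-universally Baire set in $\mathcal{V}_{\mathcal{Q}}[h\uphar\beta]$ lifts to one definable from trees in $\mathcal{V}_{\mathcal{Q}}$, and pulling back through the iteration (using that $\mathcal{V}$ and $\mathcal{V}_{\mathcal{Q}}$ agree on universal Baireness up to the iteration maps) shows that its extension lies in $\Gamma^*_g$.

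The most delicate step is matching the Chang-type data, and I expect this to be the main obstacle. Given $f\colon\omega\to\xi$ with $\xi<\delta_{\infty}$ in $\mathcal{V}[g]$, the finite-support structure of $\mathrm{Col}(\omega,{<}\delta)$ and replacement in $\mathcal{V}$ force $f$ to lie in some $\mathcal{V}[g\uphar\alpha]$ with $\alpha<\delta$. Since $\mathrm{Col}(\omega,{<}\alpha)$ makes $\xi$ countable, $f$ is coded by a real in $\mathbb{R}^*_g=\mathbb{R}^*_h$ together with a parameter from $\mathcal{M}_{\infty}$. One then has to verify that decoding this real inside $\mathcal{V}_{\mathcal{Q}}[h\uphar\beta]$ recovers the same $f$, for which the key point is that $\mathcal{M}_{\infty}$ and its direct limit embeddings are uniformly recoverable in $\mathcal{V}_{\mathcal{Q}}[h]$ from the internal direct limit system $\mathcal{F}^*_h(\mathcal{Q})$; this, in turn, rests on \cref{pres_of_M_infty}. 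The analogous argument, substituting hereditarily countable subsets of $\mathcal{M}_{\infty}\vert(\Theta^+)^{\mathcal{M}_{\infty}}$ for $f$, handles $\power_{\omega_1}(\mathcal{M}_{\infty}\vert(\Theta^+)^{\mathcal{M}_{\infty}})$ in $\mathsf{CDM}^{-}$. Once all four parameters are matched, both $L$-constructions take identical inputs and thus define the same model.
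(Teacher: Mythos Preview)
The paper does not prove this lemma; it is cited from \cite[Proposition 4.2]{CoveringChang}. Your parameter-matching outline is the right shape, but two of the steps have genuine gaps. For $\Gamma^*_g=\Gamma^{*}_h$, you propose applying $\pi_{\mathcal{V},\mathcal{V}_{\mathcal{Q}}}$ to a tree pair $(T,U)$ witnessing universal Baireness, but $(T,U)$ lives in $\mathcal{V}[g\uphar\alpha]$, not in $\mathcal{V}$, and the iteration map has no direct action on that generic extension (nothing prevents $\mathcal{T}_{\mathcal{P},\mathcal{Q}}$ from using extenders with critical point below $\alpha$, so one cannot simply lift the map). The argument in \cite{CoveringChang}, as the present paper signals in the proof of \cref{determinacy_in_CDM_2}, instead uses that $\{\mathrm{Code}(\Sigma^g_{\mathcal{P}\vert\xi})\mid\xi<\delta\}$ is Wadge-cofinal in $\Gamma^*_g$; since $\Sigma_{\mathcal{Q}\vert\pi_{\mathcal{P},\mathcal{Q}}(\xi)}$ is a tail of $\Sigma_{\mathcal{P}\vert\xi}$, the two families of strategy codes are mutually Wadge-reducible and the equality follows.

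For the Chang datum, the claim that ``$\mathrm{Col}(\omega,{<}\alpha)$ makes $\xi$ countable'' is false for $\xi\geq\alpha$, and since $\xi$ ranges below $\delta_\infty=\pi_{\mathcal{P},\infty}(\delta)>\delta>\alpha$ this is the generic situation; an $\omega$-sequence into $\delta_\infty$ is not in general coded by a single real. You correctly identify the endgame---that the direct-limit embeddings are recoverable from $\mathcal{F}^*_h(\mathcal{Q})$ via \cref{pres_of_M_infty}---but the route there is not through real-coding of $f$. One needs instead to use the direct-limit structure itself: each $f(n)$ lies in $\mathrm{ran}(\pi_{\mathcal{R}_n,\infty})$ for some $\mathcal{R}_n\in\mathcal{F}^*_g(\mathcal{P})$, countable directedness yields a single $\mathcal{R}$ with $\mathrm{ran}(f)\subset\mathrm{ran}(\pi_{\mathcal{R},\infty})$, and only then does the preimage sequence (now into $\delta$), together with the genuinely countable data specifying $\mathcal{R}$, become something one can locate inside $\mathcal{V}_{\mathcal{Q}}[h]$.
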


    The following theorem is the main result of \cite{CoveringChang}.

    \begin{thm}[\cite{CoveringChang}]\label{sets_of_reals_in_CDM}
        $\mathsf{CDM}\cap\power(\mathbb{R}^*_g)=\mathsf{CDM}^{-}\cap\power(\mathbb{R}^*_g)=\Gamma^*_g$.
    \end{thm}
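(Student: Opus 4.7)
The containment $\Gamma^*_g \subset \mathsf{CDM}^- \cap \power(\mathbb{R}^*_g) \subset \mathsf{CDM} \cap \power(\mathbb{R}^*_g)$ is immediate from $\mathsf{DM}=L(\Gamma^*_g,\mathbb{R}^*_g)\subset\mathsf{CDM}^-\subset\mathsf{CDM}$, so the substantive content is $\mathsf{CDM}\cap\power(\mathbb{R}^*_g)\subset\Gamma^*_g$. The plan is to take an arbitrary $A\in\mathsf{CDM}\cap\power(\mathbb{R}^*_g)$, present it as
\[
A=\{y\in\mathbb{R}^*_g\mid \mathsf{CDM}\models\phi(y,\vec{p})\}
\]
for some formula $\phi$ and finitely many parameters $\vec{p}$, and reduce $A$ to something with a ${<}\delta$-universally Baire representation in $\mathcal{V}$.

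The first (preparatory) step is a parameter reduction. Real parameters get absorbed into $\phi$; parameters coming from $\Gamma^*_g$ are replaced by their ${<}\delta$-uB trees living in $\mathcal{V}$; and parameters in $\bigcup_{\xi<\delta_\infty}{}^\omega\xi$ get folded into parameters in $\mathcal{M}_\infty$ (using that such $\omega$-sequences live in $\mathcal{V}[g]$ and can be coded by a real together with an element of $\mathcal{M}_\infty$). This reduces the situation to a single $p\in\mathcal{M}_\infty$, which by the direct-limit definition of $\mathcal{M}_\infty$ takes the form $p=\pi_{\mathcal{Q}_0,\infty}(q)$ for some $\mathcal{Q}_0\in\mathcal{F}^*_g(\mathcal{P})$ and $q\in\mathcal{Q}_0$.

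The main step is the pointwise capturing of membership in $A$. Given $y\in\mathbb{R}^*_g$, apply \cref{easy_lemma}(1) to obtain a genericity iterate $\mathcal{R}$ of $\mathcal{Q}_0$ above a sufficiently large $\eta$, together with a maximal $\mathcal{R}$-generic $h\subset\mathrm{Col}(\omega,{<}\delta)$ with $y\in\mathcal{V}_{\mathcal{R}}[h]$. By \cref{pres_of_M_infty}, $\pi_{\mathcal{Q}_0,\infty}=\pi_{\mathcal{R},\infty}\circ\pi_{\mathcal{Q}_0,\mathcal{R}}$, so $p=\pi_{\mathcal{R},\infty}(\pi_{\mathcal{Q}_0,\mathcal{R}}(q))$; and by \cref{pres_of_CDM}, $\mathsf{CDM}=\mathsf{CDM}^{\mathcal{V}_{\mathcal{R}}[h]}$. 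Thus the question ``$y\in A$?'' is decided inside $\mathcal{V}_{\mathcal{R}}[h]$ using the formula $\phi$ applied to $y$ and $\pi_{\mathcal{Q}_0,\mathcal{R}}(q)$. By homogeneity of $\mathrm{Col}(\omega,{<}\delta)$, the answer depends only on $y$ and the fixed $q$, independently of the maximal generic $h$ chosen above the condition that decides $y$. Formalizing this, the map $y\mapsto(y\in A)$ admits a ${<}\delta$-uB representation definable in $\mathcal{V}$ from the iteration strategy $\Sigma$ and $q$, which places $A$ into $\Gamma^*_g$.

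The principal obstacle is the parameter reduction, specifically the sequences in $\bigcup_{\xi<\delta_\infty}{}^\omega\xi$: these can target ordinals arbitrarily high in $\mathcal{M}_\infty$, well above $(\Theta^+)^{\mathcal{M}_\infty}$ (where $\mathsf{CDM}^-$ cuts off), and one must argue that an $\omega$-sequence of ordinals below $\delta_\infty$ contributes no genuinely new set of reals, i.e.\ is always coded by a real together with an element of $\mathcal{M}_\infty$ recoverable from $\Sigma$. This is essentially a $\mathsf{DC}$-style argument about $\mathsf{CDM}$, and it is what makes the statement for $\mathsf{CDM}$ collapse to the statement for $\mathsf{CDM}^-$. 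A secondary obstacle is uniformizing the choice $y\mapsto(\mathcal{R},h)$ across all $y\in\mathbb{R}^*_g$, which is handled by the fact that \cref{easy_lemma}(1) produces $\mathcal{R}$ by a normal iteration extending $\mathcal{T}_{\mathcal{P},\mathcal{Q}_0}$, so the ${<}\delta$-uB tree built in $\mathcal{V}$ is a single object rather than a family indexed by $y$.
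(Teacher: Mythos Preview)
The paper does not give a proof of this theorem; it is quoted as the main result of \cite{CoveringChang}. So there is no in-paper argument to compare against directly. Your outline is in the right spirit---definability over $\mathsf{CDM}$, passage to a genericity iterate via \cref{easy_lemma} and \cref{pres_of_CDM}, and invariance under the choice of maximal generic---and this matches the shape of arguments the paper \emph{does} carry out, notably the proof of \cref{determinacy_in_CDM_2}.

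Two of your steps are underspecified, however. First, the parameter reduction for sequences $f\in\bigcup_{\xi<\delta_\infty}{}^\omega\xi$ is not quite ``a real plus an element of $\mathcal{M}_\infty$'': one recovers $f$ from a real coding its preimage under some $\pi_{\mathcal{Q},\infty}$ together with the map $\pi_{\mathcal{Q},\infty}$ itself, which is not an element of $\mathcal{M}_\infty$ but rather is definable over $\mathcal{V}_{\mathcal{Q}}[h]$ via the forcing relation (compare the treatment of the parameter $Y$ in the proof of \cref{determinacy_in_CDM_2}). Second, and more seriously, the passage from ``for each $y$ the answer is decided inside $\mathcal{V}_{\mathcal{R}}[h]$'' to ``$A$ has a ${<}\delta$-universally Baire representation in some $\mathcal{V}[g\uphar\alpha]$'' is the substantive content of the theorem and is not a formality: one must actually produce trees, uniformly in $y$, witnessing membership and non-membership. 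Your closing remark about uniformizing $y\mapsto(\mathcal{R},h)$ acknowledges the issue but does not resolve it; turning a family of genericity iterations indexed by reals into a single universally Baire code is precisely the work carried out in \cite{CoveringChang}.
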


    \begin{cor}\label{determinacy_in_CDM_1}
        Both $\mathsf{CDM}$ and $\mathsf{CDM}^-$ are models of $\mathsf{AD}^{+}+\mathsf{AD}_{\mathbb{R}}$.
    \end{cor}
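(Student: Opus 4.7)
The plan is to reduce everything to the derived model $\mathsf{DM}=L(\Gamma^*_g,\mathbb{R}^*_g)$ via \cref{sets_of_reals_in_CDM}. Since $\mathsf{DM}\subseteq\mathsf{CDM}^-\subseteq\mathsf{CDM}$, and since by \cref{sets_of_reals_in_CDM} all three models contain exactly the reals $\mathbb{R}^*_g$ and exactly the sets of reals $\Gamma^*_g$, any assertion about $\langle\mathbb{R}^*_g,\wp(\mathbb{R}^*_g)\rangle$ whose witness in $\mathsf{DM}$ is a real, a set of reals, or a set of ordinals in $\mathsf{DM}$ transfers upward to $\mathsf{CDM}^-$ and $\mathsf{CDM}$.

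For $\mathsf{AD}^+$, I would invoke Woodin's derived model theorem (see \cite{St09}) applied in $\mathcal{V}[g]$ at the limit of Woodin cardinals $\delta$, yielding $\mathsf{DM}\models\mathsf{AD}^+$. I would then push each conjunct upward separately: $\mathsf{DC}_{\mathbb{R}}$ is witnessed by a selection function on a binary relation on $\mathbb{R}^*_g$; every set of reals being $\infty$-Borel is witnessed by an $\infty$-Borel code consisting of a set of ordinals plus a formula; and ordinal determinacy for $\pi^{-1}[A]$ with $\pi\colon\lambda^{\omega}\to\omega^{\omega}$ continuous and $\lambda<\Theta^{\mathsf{DM}}$ is witnessed by a winning strategy lying in $\mathsf{DM}$. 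Each witness remains available in $\mathsf{CDM}^-$ and $\mathsf{CDM}$, and since the underlying game trees and payoff sets are unchanged, each continues to witness the relevant statement there.

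The main work lies in $\mathsf{AD}_{\mathbb{R}}$. Here I would use Woodin's characterization, valid under $\mathsf{AD}^+$, that $\mathsf{AD}_{\mathbb{R}}$ is equivalent to the assertion that every set of reals is Suslin. If $\mathrm{cof}(\delta)=\omega$ in $\mathcal{V}$ then $\mathsf{DM}$ already satisfies $\mathsf{AD}_{\mathbb{R}}$ by the classical form of the derived model theorem, and this inherits upward by the opening paragraph. The real obstacle is the case $\mathrm{cof}(\delta)>\omega$, where $\mathsf{DM}$ need not satisfy $\mathsf{AD}_{\mathbb{R}}$ on its own. Here the plan is to show that for every $A\in\Gamma^*_g$ there is a tree $T_A$ on $\omega\times\eta$, for some $\eta<(\Theta^+)^{\mathcal{M}_{\infty}}$, lying inside $\mathcal{M}_{\infty}|(\Theta^+)^{\mathcal{M}_{\infty}}$ with $p[T_A]=A$. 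Such an $A$ equals $B^*_g$ for some $<\delta$-universally Baire $B\in\mathcal{V}[g\uphar\alpha]$; the $<\delta$-uB trees witnessing the canonical extensions $B^{g\uphar\gamma}$ for $\gamma\in(\alpha,\delta)$ can be pushed forward along the iteration maps $\pi^{\mathcal{Q}}_{\mathcal{V}_{\mathcal{R}},\infty}$ of the direct-limit system of \cref{dfn:internal_direct_limit_system}, and threaded coherently into a single tree in the common target $\mathcal{M}_{\infty}$. Since $\mathcal{M}_{\infty}|(\Theta^+)^{\mathcal{M}_{\infty}}$ is a defining parameter of $\mathsf{CDM}^-$, each such $T_A$ lies in $\mathsf{CDM}^-\subseteq\mathsf{CDM}$, establishing Suslinity of every set of reals in both models and completing the proof.
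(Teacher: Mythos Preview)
Your reduction to $\mathsf{DM}$ via \cref{sets_of_reals_in_CDM} and the upward transfer of $\mathsf{AD}^+$ are correct and match the paper's strategy. The paper's proof is essentially two lines: Steel's \cite[Theorem~11.3.2]{CPMP} establishes directly that $\mathsf{DM}\models\mathsf{AD}^{+}+\mathsf{AD}_{\mathbb{R}}$ in the lbr hod premouse setting, and then \cref{sets_of_reals_in_CDM} transfers both statements upward to $\mathsf{CDM}^-$ and $\mathsf{CDM}$.

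Your treatment of $\mathsf{AD}_{\mathbb{R}}$ diverges and contains a gap. You correctly note that the classical derived model theorem in \cite{St09} does not by itself yield $\mathsf{AD}_{\mathbb{R}}$ when $\cf(\delta)>\omega$, but you overlook that in the present context---where $\mathcal{V}$ is an lbr hod premouse---Steel's finer analysis in \cite{CPMP} already gives $\mathsf{DM}\models\mathsf{AD}_{\mathbb{R}}$ regardless of $\cf(\delta)$; no separate construction of Suslin trees in $\mathcal{M}_{\infty}$ is needed. Your proposed alternative of pushing ${<}\delta$-universally Baire trees along the direct limit maps and ``threading them coherently into a single tree in $\mathcal{M}_{\infty}$'' is not a proof as written: the witnessing trees live in different intermediate extensions $\mathcal{V}[g\uphar\gamma]$, the maps $\pi^{\mathcal{Q}}_{\mathcal{V}_{\mathcal{R}},\infty}$ act on the hod premice rather than directly on these trees, and you have not specified any mechanism by which their images glue into a single tree on $\omega\times\eta$ projecting to $A$. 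This line might be salvageable with substantial additional work, but it is both vague and unnecessary given Steel's result.
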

    \begin{proof}
        As \cite[Theorem 11.3.2]{CPMP}, Steel showed that the sets of reals in $\mathsf{DM}$ is $\Gamma^*_g$ and thus $\mathsf{DM}\models\mathsf{AD}^{+}+\mathsf{AD}_{\mathbb{R}}$.\footnote{In \cite{dm_of_self_it}, the second and fourth authors also showed the same conclusion for any self-iterable structures, which may not be fine structural.}
        So the corollary follows from \cref{sets_of_reals_in_CDM}.
    \end{proof}

    \subsection{Main proofs}
    
    In this subsection, we prove the properties of $\mathsf{CDM}^-$ listed in \cref{main_thm_on_CDM}. Regarding \cref{determinacy_in_CDM_2} and \cref{value_of_Theta}, the proofs are the same as ones in \cite{CDM+}. From now on, we assume that $(\mathcal{V}, \Omega)$ is an excellent hod pair and $\kappa<\delta$ are cardinals of $\mathcal{V}$ such that
        \begin{align*}
        \mathcal{V}\models\mathsf{ZFC} & +\delta\text{ is a regular limit of Woodin cardinals}\\
        & + \kappa\text{ is the least ${<}\delta$-strong cardinal}.
        \end{align*}
    Also, let $g\subset\mathrm{Col}(\omega, {<}\delta)$ be $\mathcal{V}$-generic and $\mathsf{CDM}^{-}$ is defined in $\mathcal{V}[g]$.
    
    \begin{thm}\label{determinacy_in_CDM_2}
        $\mathsf{CDM}^{-}\models\mathsf{DC}+\Theta\text{ is regular.}$
    \end{thm}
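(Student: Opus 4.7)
The plan is to establish $\mathsf{DC}$ and the regularity of $\Theta$ in $\mathsf{CDM}^-$ in two separate steps, exploiting the closure of $\mathsf{CDM}^-$ under countable sequences and the pointclass identification provided by \cref{sets_of_reals_in_CDM}.

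For $\mathsf{DC}$, the crucial feature is that $\power_{\omega_1}(\mathcal{M}_{\infty}\vert(\Theta^+)^{\mathcal{M}_{\infty}}) \in \mathsf{CDM}^-$, so every countable sequence of elements of $\mathcal{M}_{\infty}\vert(\Theta^+)^{\mathcal{M}_{\infty}}$ already lies in $\mathsf{CDM}^-$. Every element of $\mathsf{CDM}^-$ is first-order definable inside $\mathsf{CDM}^-$ from parameters in $\mathbb{R}^*_g \cup \Gamma^*_g \cup \mathcal{M}_{\infty}\vert(\Theta^+)^{\mathcal{M}_{\infty}}$. Given a relation $R \in \mathsf{CDM}^-$ on a set $X$ with no maximal element, I would code potential witnesses by tuples of such parameters and recursively select witnesses using $\mathsf{DC}_{\mathbb{R}}$, which holds in $\mathsf{CDM}^-$ via \cref{determinacy_in_CDM_1} and Kechris' theorem, for the real coordinates, and using the canonical well-order of $\mathcal{M}_{\infty}\vert(\Theta^+)^{\mathcal{M}_{\infty}}$ for the mouse coordinates. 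The resulting $\omega$-chain of parameter-tuples is a countable subset of $\mathbb{R}^*_g \cup \mathcal{M}_{\infty}\vert(\Theta^+)^{\mathcal{M}_{\infty}}$, which lies in $\mathsf{CDM}^-$ by construction, and the corresponding $R$-chain can then be decoded inside $\mathsf{CDM}^-$.

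For the regularity of $\Theta$, by \cref{sets_of_reals_in_CDM} we have $\mathsf{CDM}^- \cap \power(\mathbb{R}^*_g) = \Gamma^*_g$, so $\Theta^{\mathsf{CDM}^-}$ is the strict supremum of the Wadge ranks of sets in $\Gamma^*_g$. Suppose toward contradiction that $f \colon \alpha \to \Theta$ is cofinal in $\mathsf{CDM}^-$ for some $\alpha < \Theta$. Fix a surjection $\pi \colon \mathbb{R}^*_g \to \alpha$ in $\mathsf{DM}$, and set $\tilde{f} = f \circ \pi \colon \mathbb{R}^*_g \to \Theta$. Using Moschovakis' coding lemma --- available in $\mathsf{CDM}^-$ via $\mathsf{AD}^+$ from \cref{determinacy_in_CDM_1} --- assemble a single set $A \in \mathsf{CDM}^- \cap \power(\mathbb{R}^*_g) = \Gamma^*_g$ whose Wadge rank dominates $\tilde{f}(x)$ for each $x$; since $\tilde{f}$ is cofinal, the Wadge rank of $A$ dominates $\Theta$, contradicting the definition of $\Theta$.

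The principal obstacle lies in the $\mathsf{DC}$ part: one must ensure that the recursively selected sequence of witnesses is genuinely captured by $\mathsf{CDM}^-$, which forces the parameters to come from the designated generators and is precisely why the explicit presence of $\power_{\omega_1}(\mathcal{M}_{\infty}\vert(\Theta^+)^{\mathcal{M}_{\infty}})$ in the definition of $\mathsf{CDM}^-$ is indispensable; without it, the assembled sequence might escape the model. The regularity of $\Theta$ then follows from a Solovay-style amalgamation argument, with \cref{sets_of_reals_in_CDM} bridging between the full model and the pointclass $\Gamma^*_g$ where the final contradiction takes place.
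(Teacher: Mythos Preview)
Your argument for the regularity of $\Theta$ has a genuine gap. The Moschovakis coding lemma does not produce a single set $A$ whose Wadge rank dominates the range of an arbitrary $\tilde f\colon\mathbb{R}\to\Theta$; indeed, such a bounding principle would show that $\Theta$ is regular in every model of $\mathsf{AD}^+$, and this is false (for example, $\Theta$ can equal $\theta_\omega$ in the Solovay sequence of a model of $\mathsf{AD}_{\mathbb{R}}$). The coding lemma selects witnesses along a fixed prewellordering inside a given pointclass; it does not manufacture upper bounds on Wadge ranks of cofinal families. The paper's proof is therefore necessarily model-specific: one fixes a putative cofinal $f\colon\mathbb{R}^*_g\to\Theta$, passes to a genericity iterate $\mathcal{Q}$ of $\mathcal{P}$ in which the ordinal and $\power_{\omega_1}$-parameters defining $f$ lie in $\mathrm{ran}(\pi_{\mathcal{V}_{\mathcal{Q}},\infty})$, and then uses the invariance of $\mathsf{CDM}^-$ under genericity iterates (\cref{pres_of_CDM}) together with the stabilization of parameters under further iteration (\cref{stabilizing_parameters}) to show that $f$ is preserved by suitable iteration maps. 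The contradiction is then obtained by comparing Wadge ranks of codes $\mathrm{Code}(\Sigma^h_{\mathcal{Q}\vert\eta})$ for carefully chosen $\eta$, and the regularity of $\delta$ in $\mathcal{V}$ enters essentially, first to establish $\cf(\Theta)>\omega$ and then again in the Wadge-rank comparison.

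Your $\mathsf{DC}$ sketch is closer in spirit but also incomplete: elements of $\mathsf{CDM}^-$ are definable from ordinals, reals, sets of reals in $\Gamma^*_g$, and elements of $\power_{\omega_1}(\mathcal{M}_\infty\vert(\Theta^+)^{\mathcal{M}_\infty})$, and $\mathsf{DC}_{\mathbb{R}}$ alone does not let you thread an $\omega$-sequence through the $\Gamma^*_g$-coordinates. The paper first proves $\cf(\Theta)>\omega$ (this already requires the regularity of $\delta$), then invokes Solovay's theorem that $\mathsf{AD}+\mathsf{DC}_{\mathbb{R}}+\cf(\Theta)>\omega$ implies $\mathsf{DC}_{\power(\mathbb{R})}$, and finally reduces full $\mathsf{DC}$ to $\mathsf{DC}_X$ for $X=\power_{\omega_1}(\mathcal{M}_\infty\vert(\Theta^+)^{\mathcal{M}_\infty})$, which is inherited from $\mathcal{V}[g]$ because $\omega$-sequences from $X$ code into $X$.
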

    \begin{proof}
        First, we show that $\mathsf{CDM}^{-}\models\cf(\Theta)>\omega$. It follows from the proof of \cite[Corollary 3.7]{dm_of_self_it} without any change as follows. Suppose toward a contradiction that $\mathsf{CDM}^{-}\models\cf(\Theta)=\omega$. Then by \cref{sets_of_reals_in_CDM}, there is a sequence $\langle A_n \mid n<\omega\rangle$ that is Wadge cofinal in $\Gamma^*_g$. For any $n<\omega$, let $\lambda_n<\delta$ be such that there is an $B_n\subset\mathbb{R}^{\mathcal{V}[g\uphar\lambda_n]}$ such that it is ${<}\delta$-universally Baire in $\mathcal{V}[g\uphar\lambda_n]$ and $A_n=B^*_n$. Let $\lambda=\sup_{n<\omega}\lambda_n$. Since $\delta$ is regular, $\lambda<\delta$. Let $\delta'<\delta$ be the least Woodin cardinal above $\lambda$ in $\mathcal{V}$. Then by \cite[Fact 3.3]{dm_of_self_it}, all $A_n$'s are projective in $\mathrm{Code}(\Sigma^g_{\mathcal{P}\vert\delta'})$. It follows, however, that even if $\delta'<\xi<\delta$, $\mathrm{Code}(\Sigma^g_{\mathcal{P}\vert\xi})$ is projective in $\mathrm{Code}(\Sigma^g_{\mathcal{P}\vert\delta'})$, which contradicts \cite[Lemma 3.4]{dm_of_self_it}.

        Now we can easily show that $\mathsf{DC}$ holds in $\mathsf{CDM}^{-}$. In \cite{Solovay}, Solovay showed that $\mathsf{AD}+\mathsf{DC}_{\mathbb{R}}+\cf(\Theta)>\omega$ implies that $\mathsf{DC}_{\power(\mathbb{R})}$. So, $\mathsf{CDM}^{-}$ satisfies $\mathsf{DC}_{\power(\mathbb{R})}$. Then in $\mathsf{CDM}^{-}$, $\mathsf{DC}$ reduces to $\mathsf{DC}_X$ where $X=\power_{\omega_1}(\mathcal{M}_{\infty}\vert(\Theta^+)^{\mathcal{M}_{\infty}})$, because any element of $\mathsf{CDM}^{-}$ is ordinal definable in parameters from $X$ and sets of reals. Since any $\omega$-sequence from $X$ can be easily coded into an element of $X$, $\mathsf{DC}_X$ in $\mathcal{V}[g]$ implies $\mathsf{DC}_X$ in $\mathsf{CDM}^{-}$. Therefore, $\mathsf{CDM}^{-}\models\mathsf{DC}$.

        The regularity of $\Theta$ in $\mathsf{CDM}^{-}$ also follows from the proof of \cite[Theorem 1.3]{dm_of_self_it}, but we need to use \cref{pres_of_CDM}. Let $\Theta=\Theta^{\mathsf{CDM}^{-}}$. Suppose toward a contradiction that there is a cofinal map $f\colon\mathbb{R}^*_g\to\Theta$ in $\mathsf{CDM}^{-}$. Then there are a formula in the language of set theory, an ordinal $\gamma$, $Y\in\power_{\omega_1}(\mathcal{M}_{\infty}\vert(\Theta^+)^{\mathcal{M}_{\infty}})$, $Z\in\Gamma^*_g$, $x\in\mathbb{R}^*_g$ and $\vec{\beta}\in{}^{<\omega}\gamma$ such that
        \[
        f=\{\langle u, \zeta\rangle\in\mathbb{R}^*_g\times\Theta\mid\mathsf{CDM}^-\vert\gamma\models\phi[u, \zeta, Y, Z, x, \vec{\beta}]\},
        \]
        where $\mathsf{CDM}^{-}\vert\gamma = L_{\gamma}(\mathcal{M}_{\infty}\vert(\Theta^+)^{\mathcal{M}_{\infty}}, \power_{\omega_1}(\mathcal{M}_{\infty}\vert(\Theta^+)^{\mathcal{M}_{\infty}}), \Gamma^*_g, \mathbb{R}^*_g)$.
        We take a genericity iterate $\mathcal{Q}$ of $\mathcal{P}$ such that $\{\vec{\beta}, \gamma\}\cup Y\subset\mathrm{ran}(\pi_{\mathcal{V}_{\mathcal{Q}}, \infty})$ as follows: Let $\mathcal{P}^*\in\mathcal{F}^*_g(\mathcal{P})$ such that $\{\vec{\beta}, \gamma\}\cup Y\subset\mathrm{ran}(\pi^{\mathcal{P}}_{\mathcal{P}^*}, \infty)$. Such a $\mathcal{P}^*$ exists because $\mathcal{F}^*_g(\mathcal{P})$ is countably directed. By \cref{easy_lemma}(1), there is an iterate $\mathcal{Q}$ of $\mathcal{P}^*$ such that it is a genericity iterate of $\mathcal{P}$ and $\mathcal{T}_{\mathcal{P}, \mathcal{P}^*}{}^{\frown}\mathcal{T}_{\mathcal{P}^*, \mathcal{Q}}$ is normal. Since $\pi^{\mathcal{P}}_{\mathcal{V}_{\mathcal{P}^*}, \infty}=\pi^{\mathcal{Q}}_{\mathcal{V}_{\mathcal{Q}}, \infty}\circ\pi_{\mathcal{V}_{\mathcal{P}^*}, \mathcal{V}_{\mathcal{Q}}}$, $\mathcal{Q}$ satisfies the desired property.
    
        \begin{claim}[{\cite[Lemma 4.3]{CoveringChang}}]\label{stabilizing_parameters}
            Whenever $\mathcal{R}$ is a genericity iterate of $\mathcal{Q}$, if $s\in\mathrm{ran}(\pi_{\mathcal{V}_{\mathcal{Q}}, \infty})$ then $\pi_{\mathcal{V}_{\mathcal{Q}}, \mathcal{V}_{\mathcal{R}}}(s)=s$.
        \end{claim}
        
        \begin{proof}
            Let $s_{\mathcal{Q}}=\pi_{\mathcal{V}_{\mathcal{Q}}, \infty}^{-1}(s)$. Then we have
            \[
            \pi_{\mathcal{V}_{\mathcal{Q}}, \mathcal{V}_{\mathcal{R}}}(s)=\pi_{\mathcal{V}_{\mathcal{Q}}, \mathcal{V}_{\mathcal{R}}}(\pi_{\mathcal{V}_{\mathcal{Q}}, \infty}(s_{\mathcal{Q}}))=\pi_{\mathcal{V}_{\mathcal{R}}, \infty}(\pi_{\mathcal{V}_{\mathcal{Q}}, \mathcal{V}_{\mathcal{R}}}(s_{\mathcal{Q}}))=\pi_{\mathcal{V}_{\mathcal{Q}}, \infty}(s_{\mathcal{Q}})=s.
            \]
            The second equation follows from the elementarity of $\pi_{\mathcal{V}_{\mathcal{Q}}, \mathcal{V}_{\mathcal{R}}}$ and the third equation holds since $\pi_{\mathcal{V}_{\mathcal{Q}}, \infty}=\pi_{\mathcal{V}_{\mathcal{R}}, \infty}\circ\pi_{\mathcal{V}_{\mathcal{Q}}, \mathcal{V}_{\mathcal{R}}}$ by \cref{pres_of_M_infty}.
        \end{proof}
        
        Let $h\subset\mathrm{Col}(\omega, {<}\delta)$ be a maximal $\mathcal{Q}$-generic. Let $\xi_Y<\delta$ be such that $Y\subset\pi_{\mathcal{Q}, \infty}[\xi_Y]$. Let $y\in\mathbb{R}^*_{h}$ code a function $f_y\colon\omega\to\xi_Y$ such that $Y=\pi_{\mathcal{Q}, \infty}[\mathrm{ran}(f_y)]$.
        Also, since $\{\mathrm{Code}(\Sigma^g_{\mathcal{P}\vert\xi})\mid\xi<\delta\}$\footnote{For an iteration strategy $\Sigma$ for a countable structure, $\mathrm{Code}(\Sigma)$ is a set of reals that canonically codes $\Sigma\uphar\mathrm{HC}$, where $\mathrm{HC}$ denotes the set of hereditarily countable sets. See \cite[Section 2.7]{CPMP}.} is Wadge cofinal in $\Gamma^*_g$ as argued in the proof of \cite[Proposition 4.2]{CoveringChang}, we may assume that $Z=\mathrm{Code}(\Sigma^g_{\mathcal{P}\vert\xi_Z})$ for some $\xi_Z<\delta$.
        Let $z\in\mathbb{R}^*_g$ be a real coding $\pi_{\mathcal{P}, \mathcal{Q}}\uphar(\mathcal{P}\vert\xi_Z)\colon\mathcal{P}\vert\xi_Z\to\mathcal{Q}\vert\pi_{\mathcal{P}, \mathcal{Q}}(\xi_Z)$. Note that $Z$ can be defined from $z$ as the code of the $\pi_{\mathcal{P}, \mathcal{Q}}$-pullback of the strategy for $\mathcal{Q}\vert\pi_{\mathcal{P}, \mathcal{Q}}(\xi_Z)$ determined by the strategy predicate of $\mathcal{Q}$. Because $\mathsf{CDM}^{-}=(\mathsf{CDM}^{-})^{\mathcal{V}_{\mathcal{Q}}[h]}$ by \cref{pres_of_CDM}, we have
        \[
        f=\{\langle u, \zeta\rangle\in\mathbb{R}^*_g\times\Theta\mid\mathcal{V}_{\mathcal{Q}}[x, y, z][u]\models\phi^*(u, \zeta, x, y, z, \delta, \vec{\beta}, \gamma)\},
        \]
        where a formula $\phi^*$ is the conjunction of the following:
        \begin{itemize}
            \item $y$ codes a function $f\colon\omega\to\xi$ for some $\xi<\delta$, and
            \item $z$ codes an elementary embedding $\pi\colon\mathcal{M}\to\mathcal{N}$ for some lbr hod premice $\mathcal{M}$ and $\mathcal{N}$ with $\mathcal{N}\trianglelefteq\mathcal{Q}$, and 
            \item letting $Y=\pi_{\mathcal{Q}, \infty}[\mathrm{ran}(f)]$ and $Z$ be the code of the $\pi$-pullback of the strategy for $\mathcal{N}$ determined by the strategy predicate of $\mathcal{Q}$, the empty condition of $\mathrm{Col}(\omega, {<}\delta)$ forces that
            \[
            \mathsf{CDM}^-\vert\gamma\models\phi[u, \zeta, Y, Z, x, \vec{\beta}].
            \]
        \end{itemize}

        Now let $\eta_0\in[\max\{\xi_Y, \pi_{\mathcal{P}, \mathcal{Q}}(\xi_Z)\}, \delta)$ such that $x, y, z\in\mathcal{Q}[h\uphar\eta_0]$. Let $\delta_0<\delta$ be the least Woodin cardinal of $\mathcal{Q}$ above $\eta_0$ and let $\eta_1\in(\delta_0, \delta)$ be an inaccessible cardinal of $\mathcal{Q}$ such that
        \[
        (*)\;\;\mathsf{CDM}^{-}\models w(\mathrm{Code}(\Sigma^h_{\mathcal{Q}\vert\eta_1}))>\sup f[\mathbb{R}^{h\uphar\delta_0}],
        \]
        where $w(\text{--})$ denotes the Wadge rank of a set of reals. Such an $\eta_1$ exists because $\cf(\Theta)>\omega$ in $\mathsf{CDM}^{-}$. Since $f$ is cofinal, there is an $r\in\mathbb{R}^*_h$ such that 
        \[
        f(r)>w(\mathrm{Code}(\Sigma^h_{\mathcal{Q}\vert\delta_1})),
        \]
        where $\delta_1<\delta$ is a sufficiently large Woodin cardinal of $\mathcal{Q}$ above $\eta_1$ such that $\mathrm{Code}(\Sigma^h_{\mathcal{Q}\vert\delta_1})$ is not projective in $\mathrm{Code}(\Sigma^h_{\mathcal{Q}\vert\eta_1})$. \footnote{Actually, one can chosen $\delta_1$ as the least Woodin cardinal of $\mathcal{Q}$ above $\eta_1$, see \cite[Lemma 3.4]{dm_of_self_it}.}

        Using the extender algebra at $\delta_0$, we can take an $\mathcal{Q}^*\in\mathcal{F}^*_g(\mathcal{Q})$ and an $\mathcal{Q}^*$-generic $h^*\subset\mathrm{Col}(\omega, {<}\delta)$ such that $\crit(\pi_{\mathcal{Q}, \mathcal{Q}^*})>\eta_0$, $h\uphar\eta_0\subset h^*$ and $r\in\mathcal{Q}^*[h^*\uphar\pi_{\mathcal{Q},\mathcal{Q}^*}(\delta_0)]$. Then let $\mathcal{R}$ be an genericity iterate of $\mathcal{Q}^*$ such that $\crit(\pi_{\mathcal{Q}^*, \mathcal{R}})>\pi_{\mathcal{Q},\mathcal{Q}^*}(\delta_0)$. Let $k\subset\mathrm{Col}(\omega, {<}\delta)$ be a maximal $\mathcal{R}$-generic such that $h^*\uphar\pi_{\mathcal{Q}, \mathcal{Q}^*}(\delta_0)\subset k$.
        
        Let $\pi^{+}_{\mathcal{V}_{\mathcal{Q}}, \mathcal{V}_{\mathcal{R}}}\colon\mathcal{V}_{\mathcal{Q}}[h\uphar\eta_0]\to\mathcal{V}_{\mathcal{R}}[h\uphar\eta_0]$ be the canonical liftup of $\pi_{\mathcal{V}_{\mathcal{Q}}, \mathcal{V}_{\mathcal{R}}}$.
        By \cref{stabilizing_parameters}, the elementarity of $\pi^{+}_{\mathcal{V}_{\mathcal{Q}}, \mathcal{V}_{\mathcal{R}}}\colon\mathcal{V}_{\mathcal{Q}}[x, y, z]\to\mathcal{V}_{\mathcal{R}}[x, y, z]$, which is the canonical liftup of $\pi_{\mathcal{V}_{\mathcal{Q}}, \mathcal{V}_{\mathcal{R}}}$, implies that
        \[
        \pi^+_{\mathcal{V}_{\mathcal{Q}}, \mathcal{V}_{\mathcal{R}}}(f)=\{\langle u, \zeta\rangle\in\mathbb{R}^*_k\times\Theta\mid\mathcal{V}_{\mathcal{R}}[x, y, z][u]\models\phi^*(u, \zeta, x, y, z, \delta, \vec{\beta}, \gamma)\}.
        \]
        Then the following observations imply $\pi^+_{\mathcal{V}_{\mathcal{Q}}, \mathcal{V}_{\mathcal{R}}}(f)=f$:
        \begin{enumerate}
            \item Because $\mathcal{R}$ is a genericity iterate of $\mathcal{Q}$,
            \[
            (\mathsf{CDM}^{-})^{\mathcal{V}_{\mathcal{Q}}[h]}=(\mathsf{CDM}^{-})^{\mathcal{V}_{\mathcal{R}}[k]}
            \]
            by \cref{pres_of_CDM}.
            \item Let $Y'=\pi_{\mathcal{R}, \infty}[\mathrm{ran}(f_y)]$, where $f_y\colon\omega\to\xi_Y$ is the function coded by $y$. Since $\crit(\pi_{\mathcal{Q}, \mathcal{R}})>\xi_Y$, \cref{pres_of_M_infty} implies that $Y'=\pi_{\mathcal{V}_{\mathcal{Q}}, \mathcal{V}_{\mathcal{R}}}(Y)$. Moreover, $\pi_{\mathcal{V}_{\mathcal{Q}}, \mathcal{V}_{\mathcal{R}}}(Y)=Y$ by \cref{stabilizing_parameters}. Therefore, $Y'=Y$.
            \item Since $\crit(\pi_{\mathcal{Q}, \mathcal{R}})>\pi_{\mathcal{P}, \mathcal{Q}}(\xi_Z)$, $\mathcal{Q}\vert\pi_{\mathcal{P}, \mathcal{Q}}(\xi_Z)=\mathcal{R}\vert\pi_{\mathcal{P}, \mathcal{Q}}(\xi_Z)$. So $Z$ is the code of the $\pi_{\mathcal{P}, \mathcal{R}}$-pullback of the strategy for $\mathcal{R}\vert\pi_{\mathcal{P}, \mathcal{Q}}(\xi_Z)$ determined by the strategy predicate of $\mathcal{V}_{\mathcal{R}}$.
        \end{enumerate}
        Now by the elementarity of $\pi^+_{\mathcal{V}_{\mathcal{Q}}, \mathcal{V}_{\mathcal{R}}}$, $(*)$ implies that
        \[
        \mathsf{CDM}^{-}\models w(\mathrm{Code}(\Sigma^k_{\mathcal{R}\vert\pi_{\mathcal{Q}, \mathcal{R}}(\eta_1)}))>\sup f[\mathbb{R}^{k\uphar\pi_{\mathcal{Q}, \mathcal{R}}(\delta_0)}].
        \]
        Since $r\in\mathbb{R}^{h^*\uphar\pi_{\mathcal{Q}, \mathcal{Q}^*}(\delta_0)}\subset\mathbb{R}^{k\uphar\pi_{\mathcal{Q}, \mathcal{R}}(\delta_0)}$, it follows that $w(\mathrm{Code}(\Sigma^k_{\mathcal{R}\vert\pi_{\mathcal{Q}, \mathcal{R}}(\eta_1)}))>f(r)$.
        As $\Sigma^k_{\mathcal{R}\vert\pi_{\mathcal{Q}, \mathcal{R}}(\eta_1)}$ is a tail strategy of $\Sigma^h_{\mathcal{Q}\vert\eta_1}$, $\mathrm{Code}(\Sigma^k_{\mathcal{R}\vert\pi_{\mathcal{Q}, \mathcal{R}}(\eta_1)})$ is projective in $\mathrm{Code}(\Sigma^h_{\mathcal{Q}\vert\eta_1})$.
        Then we have
        \[
        w(\mathrm{Code}(\Sigma^h_{\mathcal{Q}\vert\delta_1}))> w(\mathrm{Code}(\Sigma^k_{\mathcal{R}\vert\pi_{\mathcal{Q}, \mathcal{R}}(\eta_1)}))>f(r),
        \]
        which contradicts the choice of $r$.
    \end{proof}

    \begin{thm}\label{value_of_Theta}
        $\Theta^{\mathsf{CDM}^{-}}=\kappa_{\infty}$, where $\kappa_{\infty}=\pi_{\mathcal{P}, \infty}(\kappa)$.
    \end{thm}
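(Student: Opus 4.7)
We prove the two inequalities $\Theta^{\mathsf{CDM}^-}\geq\kappa_\infty$ and $\Theta^{\mathsf{CDM}^-}\leq\kappa_\infty$ separately, using the regularity of $\Theta$ (\cref{determinacy_in_CDM_2}) and the identification $\mathsf{CDM}^-\cap\power(\mathbb{R}^*_g)=\Gamma^*_g$ (\cref{sets_of_reals_in_CDM}) throughout. For the lower bound, I would first observe that $\kappa$ is a limit of Woodin cardinals in $\mathcal{V}$, hence in $\mathcal{P}$: given $\lambda<\kappa$, pick a Woodin $\delta'\in(\kappa,\delta)$ of $\mathcal{V}$ and an extender $E$ with $\crit(E)=\kappa$ witnessing $(\delta'+1)$-strongness; then $\ult(\mathcal{V},E)$ sees a Woodin in $(\lambda,j_E(\kappa))$, so elementarity of $j_E$ yields a Woodin in $(\lambda,\kappa)$ of $\mathcal{V}$. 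Now given $\alpha<\kappa_\infty$, by countable directedness of $\mathcal{F}^*_g(\mathcal{P})$ pick $\mathcal{Q}\in\mathcal{F}^*_g(\mathcal{P})$ with $\alpha=\pi_{\mathcal{Q},\infty}(\bar\alpha)$ for some $\bar\alpha<\pi_{\mathcal{P},\mathcal{Q}}(\kappa)$, and by elementarity of $\pi_{\mathcal{P},\mathcal{Q}}$ pick a Woodin $\xi$ of $\mathcal{Q}$ with $\bar\alpha<\xi<\pi_{\mathcal{P},\mathcal{Q}}(\kappa)$. For a maximal $\mathcal{Q}$-generic $h\in\mathcal{V}[g]$, the set $\mathrm{Code}(\Sigma^h_{\mathcal{Q}\vert\xi})\in\Gamma^*_g$ has Wadge rank at least $\pi_{\mathcal{Q},\infty}(\xi)>\alpha$ in $\mathsf{CDM}^-$ by the standard computation of Wadge ranks for strategy codes at Woodin cardinals, yielding a surjection $\mathbb{R}^*_g\twoheadrightarrow\alpha$ there.

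For the upper bound, I would argue by contradiction: suppose $f\colon\mathbb{R}^*_g\to\kappa_\infty$ is a surjection in $\mathsf{CDM}^-$. Mimicking the template of \cref{determinacy_in_CDM_2}, write the defining clauses of $f$ using parameters $Y\in\power_{\omega_1}(\mathcal{M}_\infty\vert(\Theta^+)^{\mathcal{M}_\infty})$, $Z\in\Gamma^*_g$, $x\in\mathbb{R}^*_g$ and $\vec\beta\in{}^{<\omega}\gamma$, and take a genericity iterate $\mathcal{Q}$ of $\mathcal{P}$ absorbing $\{Y,Z,\vec\beta,\gamma\}$ into $\mathrm{ran}(\pi_{\mathcal{V}_\mathcal{Q},\infty})$. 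Using $\mathsf{CDM}^-=(\mathsf{CDM}^-)^{\mathcal{V}_\mathcal{Q}[h]}$ from \cref{pres_of_CDM}, rewrite $f$ as $\mathcal{V}_\mathcal{Q}[h]$-definable for a maximal $\mathcal{Q}$-generic $h$. The plan is then to show each $f(x)$ lies in $\pi_{\mathcal{Q},\infty}[\pi_{\mathcal{P},\mathcal{Q}}(\kappa)]$; since this image has order type $\pi_{\mathcal{P},\mathcal{Q}}(\kappa)<\kappa_\infty$, it is a proper subset of $\kappa_\infty$, contradicting surjectivity. The confinement is to be achieved by iterating $\mathcal{Q}$ further to a genericity iterate $\mathcal{R}$ above $\pi_{\mathcal{P},\mathcal{Q}}(\kappa)$ absorbing $x$, invoking \cref{stabilizing_parameters} to keep the parameters fixed, so that $f(x)$ is computed in $\mathcal{V}_\mathcal{R}[k]$ (for a maximal $\mathcal{R}$-generic $k$) from data below $\pi_{\mathcal{P},\mathcal{R}}(\kappa)$ only.

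The main obstacle is the upper bound, specifically showing that $f(x)$ is genuinely confined to $\pi_{\mathcal{R},\infty}[\pi_{\mathcal{P},\mathcal{R}}(\kappa)]$ rather than merely definable from below-$\pi_{\mathcal{P},\mathcal{R}}(\kappa)$ parameters. Resolving this requires the $<\delta$-strongness of $\pi_{\mathcal{P},\mathcal{R}}(\kappa)$ in $\mathcal{V}_\mathcal{R}$ (inherited from $\kappa$ in $\mathcal{V}$): strong extenders at this cardinal reflect any higher-ordinal data appearing in the computation of $f(x)$ down into the initial segment, and combined with \cref{stabilizing_parameters} pin the reflected value in the direct-limit image from below. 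The lower bound, by contrast, is a routine pull-back argument made possible by the rich Woodin structure below the least $<\delta$-strong cardinal.
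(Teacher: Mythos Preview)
Your lower bound ($\Theta\geq\kappa_\infty$) is correct and close to the paper's argument. The paper phrases it slightly differently: since $\kappa_{\mathcal{Q}}$ is the least ${<}\delta$-strong cardinal of $\mathcal{Q}$ it is a cutpoint of $\mathcal{Q}$, so $\pi_{\mathcal{Q},\infty}\uphar\kappa_{\mathcal{Q}}$ is governed entirely by $\Sigma_{\mathcal{Q}}\uphar(\mathcal{Q}\vert\kappa_{\mathcal{Q}})$, whose code lies in $\Gamma^*_g$; this yields a surjection $\mathbb{R}^*_g\twoheadrightarrow\pi_{\mathcal{Q},\infty}(\eta)$ for any $\eta<\kappa_{\mathcal{Q}}$. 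Your variant via Woodins below $\kappa$ and Wadge ranks of strategy codes is a legitimate route to the same conclusion. (One quibble: maximal generics $h$ are defined for genericity iterates, not arbitrary $\mathcal{Q}\in\mathcal{F}^*_g(\mathcal{P})$; but you do not actually need $h$---the tail strategy $\Sigma_{\mathcal{Q}\vert\xi}$ is already coded in $\Gamma^*_g$.)

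The upper bound is where there is a genuine gap, and where the paper takes a completely different route. The paper does \emph{not} attempt a confinement argument in the style of \cref{determinacy_in_CDM_2}. Instead it proves directly that $\Theta$ is a \emph{cutpoint} of $\mathcal{M}_\infty$, using purely determinacy-theoretic tools: if an extender $E$ on the sequence of $\mathcal{M}_\infty$ overlapped $\Theta$, then by \cite{char_of_ext_in_HOD} there is a countably complete ultrafilter $U$ on $\crit(E)$ with $\pi_U(\crit(E))\geq\pi_E(\crit(E))$; Kunen's theorem makes $U$ ordinal definable, and combining the resulting OD surjection $\power(\crit(E))\to\pi_U(\crit(E))$ with Moschovakis' coding lemma contradicts the definition of the next Solovay-sequence point. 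Once $\Theta$ is a cutpoint, $\Theta\leq\kappa_\infty$ is immediate, because $\kappa_\infty$ is ${<}\delta_\infty$-strong in $\mathcal{M}_\infty$ and hence no ordinal in $(\kappa_\infty,\delta_\infty)$ is a cutpoint.

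Your proposed confinement argument, by contrast, never closes. You need each $f(x)$ to land in $\pi_{\mathcal{R},\infty}[\kappa_{\mathcal{R}}]$, but the computation of $f(x)$ inside $(\mathsf{CDM}^-)^{\mathcal{V}_{\mathcal{R}}[k]}$ quantifies over all of $\mathcal{M}_\infty$, a definable class there of height far above $\kappa_{\mathcal{R}}$; nothing forces the output ordinal into the range of $\pi_{\mathcal{R},\infty}\uphar\kappa_{\mathcal{R}}$. The appeal to ``strong extenders at $\pi_{\mathcal{P},\mathcal{R}}(\kappa)$ reflect higher-ordinal data down'' is not an argument: applying such an extender \emph{moves $\kappa_{\mathcal{R}}$ upward}, and you give no reflection principle that would pin $f(x)$ below the image. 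You have correctly located the obstacle but not resolved it; the cutpoint argument is how the paper gets around it entirely.
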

    
    \begin{proof}
        Let $\Theta=\Theta^{\mathsf{CDM}^{-}}$. The next claim implies that $\Theta\leq\kappa_{\infty}$.
        Recall that $\alpha$ is a cutpoint of an lbr hod premouse $\mathcal{M}$ if there is no extender $E$ on the extender sequence of $\mathcal{M}$ such that $\crit(E)<\alpha\leq\lh(E)$.
        
        \begin{claim}
            $\Theta$ is a cutpoint of $\mathcal{M}_{\infty}$.
        \end{claim}
        \begin{proof}
            The claim follows from the proof of \cite[Theorem 1.7]{char_of_ext_in_HOD}, but we will write it down for the reader's convenience.
            
            We work in $\mathsf{CDM}^{-}$. Recall that $\mathsf{CDM}^{-}\models\mathsf{AD}_{\mathbb{R}}$. Suppose toward a contradiction that there is an extender $E$ on the extender sequence of $\mathcal{M}_{\infty}$ such that $\crit(E)<\Theta\leq\lh(E)$. Let $\kappa=\crit(E)$ and let $\theta_{\alpha}<\Theta$ be the least member of the Solovay sequence above $\kappa$. By \cite[Theorem 1.5]{char_of_ext_in_HOD}\footnote{The theorem is not stated in \cite{char_of_ext_in_HOD} in the generality we need, but see \cite[Theorem 0.3]{Suslin_and_cutpoints}.}, there is a countably complete ultrafilter $U$ over $\mathsf{CDM}^{-}$ such that $\kappa=\crit(U)$ and $\pi_{U}(\kappa)\geq\pi_{E}(\kappa)$. By Kunen's theorem (\cite[Theorem 7.6]{St09})\footnote{Some literature assumes $\mathsf{AD}+\mathsf{DC}$ for Kunen's theorem, but $\mathsf{AD}+\mathsf{DC}_{\mathbb{R}}$ is enough.}, $U$ is ordinal definable. Then there is an OD surjection $\power(\kappa)\to\pi_{U}(\kappa)$. Since $\theta_{\alpha+1}<\pi_{E}(\kappa)\leq\pi_{U}(\kappa)$, we can take an OD surjection $f\colon\power(\kappa)\to\theta_{\alpha+1}$. Let $A$ be any set of reals of Wadge rank $\theta_{\alpha}$. Then there is an $\mathrm{OD}(A)$ surjection $\mathbb{R}\to\kappa$. Moschovakis coding lemma (\cite[Section 7D]{Mo09}) implies that there is an $\mathrm{OD}(A)$ surjection $g\colon\mathbb{R}\to\power(\kappa)$. Then $f\circ g\colon\mathbb{R}\to\theta_{\alpha+1}$ is an $\mathrm{OD}(A)$ surjection, which is a contradiction.
        \end{proof}
        Suppose toward a contradiction that $\Theta<\kappa_{\infty}$. Then there is a $\mathcal{Q}\in\mathcal{F}^*_g(\mathcal{P})$ such that $\Theta=\pi_{\mathcal{Q}, \infty}(\eta)$ for some $\eta<\kappa_{\mathcal{Q}}$. Since $\kappa_{\mathcal{Q}}$ is the least $<\delta$-strong cardinal in $\mathcal{Q}$, it is a cutpoint of $\mathcal{Q}$. It follows that $\pi_{\mathcal{Q}, \infty}\uphar\kappa_{\mathcal{Q}}$ is an iteration map according to $\Sigma_{\mathcal{Q}}\uphar\mathcal{Q}\vert\kappa_{\mathcal{Q}}$. Since $\mathrm{Code}(\Sigma_{\mathcal{Q}}\uphar(\mathcal{Q}\vert\kappa_{\mathcal{Q}}))\in\Gamma^*_g$, there is a surjection from $\mathbb{R}^*_g$ onto $\Theta=\pi_{\mathcal{Q}, \infty}(\eta)$ is collapsed in $\mathsf{CDM}^{-}$, which is a contradiction.
        Therefore, $\Theta=\kappa_{\infty}$.
    \end{proof}
    
    \begin{thm}\label{HOD_in_CDM}
        In $\mathsf{CDM}^{-}$, $\HOD\|(\Theta^+)^{\HOD}=\mathcal{M}_{\infty}\vert(\kappa_{\infty}^+)^{\mathcal{M}_{\infty}}$.
    \end{thm}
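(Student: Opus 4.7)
My plan is to prove the equality by establishing both inclusions separately; the substantive one is $\HOD\|(\Theta^+)^{\HOD}\subseteq\mathcal{M}_\infty\vert(\kappa_\infty^+)^{\mathcal{M}_\infty}$.

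For the easier direction $\mathcal{M}_\infty\vert(\kappa_\infty^+)^{\mathcal{M}_\infty}\subseteq\HOD$, I would show that $\mathcal{M}_\infty$ itself is ordinal definable in $\mathsf{CDM}^-$ by the standard canonicity-of-direct-limits argument. Although the base $\mathcal{P}$ and its strategy $\Sigma$ need not be OD, the direct limit is invariant under genericity iteration (\cref{pres_of_M_infty}), and the ordinal representatives $\pi_{\mathcal{Q},\infty}(\bar\alpha)$ are stable by \cref{stabilizing_parameters}. This yields a uniform OD description of $\mathcal{M}_\infty$ in $\mathsf{CDM}^-$ (e.g.\ as the common direct limit taken over any suitable countably directed system of countable hod pairs coded in $\Gamma^*_g$). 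Once $\mathcal{M}_\infty\subseteq\HOD$, a short calculation places $\mathcal{M}_\infty\vert(\kappa_\infty^+)^{\mathcal{M}_\infty}$ inside $\HOD\|(\Theta^+)^{\HOD}$, using that any bijection $\kappa_\infty\to\alpha$ from $\mathcal{M}_\infty$ witnesses $|\alpha|^{\HOD}\leq\Theta$ for $\alpha<(\kappa_\infty^+)^{\mathcal{M}_\infty}$.

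For the reverse inclusion, it suffices to show every $A\in\HOD^{\mathsf{CDM}^-}$ with $A\subseteq\Theta$ lies in $\mathcal{M}_\infty$; the matching of successors $(\Theta^+)^{\HOD}=(\kappa_\infty^+)^{\mathcal{M}_\infty}$ then follows, since any $\alpha<(\Theta^+)^{\HOD}$ is witnessed in $\HOD$ by a bijection $\Theta\to\alpha$ which codes to such an $A$, while the reverse bound uses that no bijection $\kappa_\infty\to(\kappa_\infty^+)^{\mathcal{M}_\infty}$ can be in $\HOD$ without already lying in $\mathcal{M}_\infty$ (contradicting $\mathcal{M}_\infty$-successorhood). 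So fix $A\subseteq\Theta$ ordinal definable in $\mathsf{CDM}^-$ from parameters $\vec\beta$, say
\[
A=\{\xi<\Theta : \mathsf{CDM}^-\models\phi(\xi,\vec\beta)\}.
\]
By \cref{easy_lemma}(1) take a genericity iterate $\mathcal{Q}$ of $\mathcal{P}$ with $\vec\beta\in\mathrm{ran}(\pi_{\mathcal{V}_\mathcal{Q},\infty})$, and fix a maximal $\mathcal{Q}$-generic $h\subset\mathrm{Col}(\omega,{<}\delta)$; by \cref{pres_of_CDM}, $\mathsf{CDM}^-=(\mathsf{CDM}^-)^{\mathcal{V}_\mathcal{Q}[h]}$, so satisfaction of $\phi$ is computable inside $\mathcal{V}_\mathcal{Q}[h]$. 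Define
\[
A_\mathcal{Q}=\{\xi<\kappa_\mathcal{Q} : \pi_{\mathcal{Q},\infty}(\xi)\in A\},
\]
so that $A=\pi_{\mathcal{Q},\infty}(A_\mathcal{Q})$.

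The critical step is verifying $A_\mathcal{Q}\in\mathcal{Q}$; granting this, $A=\pi_{\mathcal{Q},\infty}(A_\mathcal{Q})\in\mathcal{M}_\infty$ as desired. The restriction $\pi_{\mathcal{Q},\infty}\uphar\kappa_\mathcal{Q}$ is internally available in $\mathcal{V}_\mathcal{Q}$ because $\kappa_\mathcal{Q}$ is a cutpoint of $\mathcal{Q}$ (being the least ${<}\delta$-strong) and the iterations used in its direct limit are based on $\mathcal{Q}\vert\kappa_\mathcal{Q}$, hence driven by the fragment of $\Sigma_\mathcal{Q}$ captured by the strategy predicate of $\mathcal{V}_\mathcal{Q}$. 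The parameter $\vec\beta$, stabilized by \cref{stabilizing_parameters}, is named inside $\mathcal{V}_\mathcal{Q}$. Finally, since $\mathsf{CDM}^-$ is independent of the choice of maximal generic (\cref{pres_of_CDM}), the homogeneity of $\mathrm{Col}(\omega,{<}\delta)$ ensures that $\xi\in A_\mathcal{Q}$ is decided by the empty condition, so $A_\mathcal{Q}$ is definable in $\mathcal{V}_\mathcal{Q}$ itself; as $A_\mathcal{Q}\subseteq\kappa_\mathcal{Q}<\delta$, it lies in $\mathcal{V}_\mathcal{Q}\vert\delta\subseteq\mathcal{Q}$. I expect the main obstacle to be the careful execution of this last step: rigorously exhibiting $\pi_{\mathcal{Q},\infty}\uphar\kappa_\mathcal{Q}$ as $\mathcal{V}_\mathcal{Q}$-definable (not merely $\mathcal{V}_\mathcal{Q}[h]$-definable) and confirming that the homogeneity of the Levy collapse absorbs the generic $h$ in the definition of $A_\mathcal{Q}$, both of which rely on the full force of the excellence and self-iterability framework from \cite{CoveringChang}.
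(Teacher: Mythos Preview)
Your overall strategy matches the paper's, and your argument that $A_{\mathcal{Q}}\in\mathcal{Q}$ (via homogeneity of the collapse and internal definability of $\pi_{\mathcal{Q},\infty}\uphar\kappa_{\mathcal{Q}}$ in $\mathcal{V}_{\mathcal{Q}}$) is essentially the paper's definition of its set $B$. However, there is a genuine gap: the equation $A=\pi_{\mathcal{Q},\infty}(A_{\mathcal{Q}})$ does \emph{not} follow from the definition of $A_{\mathcal{Q}}$ alone, and it is not a consequence of $A_{\mathcal{Q}}\in\mathcal{Q}$. From $A_{\mathcal{Q}}=\pi_{\mathcal{Q},\infty}^{-1}[A]\cap\kappa_{\mathcal{Q}}$ you only obtain that $A$ and $\pi_{\mathcal{Q},\infty}(A_{\mathcal{Q}})$ agree on ordinals in $\mathrm{ran}(\pi_{\mathcal{Q},\infty})\cap\kappa_\infty$; for $\alpha<\kappa_\infty$ outside this range there is no a~priori reason the two sets should coincide, and indeed this is where the content lies.

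The paper closes this gap by a second pass to genericity iterates: given any $\alpha<\kappa_\infty$, choose a further genericity iterate $\mathcal{R}$ of $\mathcal{Q}$ with $\alpha\in\mathrm{ran}(\pi_{\mathcal{R},\infty})$. By \cref{stabilizing_parameters}, $\pi_{\mathcal{V}_{\mathcal{Q}},\mathcal{V}_{\mathcal{R}}}(\vec\beta)=\vec\beta$, so $\pi_{\mathcal{Q},\mathcal{R}}(A_{\mathcal{Q}})$ has the analogous forcing-theoretic definition over $\mathcal{V}_{\mathcal{R}}$; one then checks
\[
\alpha\in A\iff\pi_{\mathcal{R},\infty}^{-1}(\alpha)\in\pi_{\mathcal{Q},\mathcal{R}}(A_{\mathcal{Q}})\iff\alpha\in\pi_{\mathcal{Q},\infty}(A_{\mathcal{Q}}).
\]
You invoke \cref{stabilizing_parameters} only to name $\vec\beta$ inside $\mathcal{V}_{\mathcal{Q}}$, but its essential role is different: it transports the \emph{definition} of $A_{\mathcal{Q}}$ stably along the direct-limit system, so that the equation $A=\pi_{\mathcal{Q},\infty}(A_{\mathcal{Q}})$ can be verified pointwise at every $\alpha<\kappa_\infty$, not just at those in $\mathrm{ran}(\pi_{\mathcal{Q},\infty})$. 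Without this step your argument is incomplete.
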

    \begin{proof}
        In $\mathsf{CDM}^-$, the $\HOD$ analysis in \cite{CPMP} implies that
        \[
        \HOD\|\Theta=\mathcal{M}_{\infty}\vert\kappa_{\infty}
        \]
        and $\Sigma_{\mathcal{M}_{\infty}\vert\kappa_{\infty}}$ is ordinal definable.
        Moreover, $\mathcal{M}_{\infty}\vert(\kappa_{\infty}^+)^{\mathcal{M}_{\infty}}$ can be written as a stack of all sound lbr hod premice $\mathcal{M}$ such that $\mathcal{M}_{\infty}\vert\kappa_{\infty}\triangleleft\mathcal{M}, \rho(\mathcal{M})=\kappa_{\infty}$, and whenever $\pi\colon\mathcal{N}\to\mathcal{M}$ is elementary and $\mathcal{N}$ is countable, there is an $\omega_1$-iteration strategy $\Lambda$ for $\mathcal{N}$ such that $\Sigma^{\pi}_{\mathcal{M}_{\infty}\vert\kappa_{\infty}}\subset\Lambda$. It follows that $\mathcal{M}_{\infty}\vert(\kappa_{\infty}^+)^{\mathcal{M}_{\infty}}\subset\mathrm{HOD}$ in $\mathsf{CDM}^-$.
        
        To show that $\HOD\|(\Theta^+)^{\HOD}\subset\mathcal{M}_{\infty}$, let $A\subset\Theta$ be ordinal definable in $\mathsf{CDM}^{-}$.
        Take a formula $\phi$ and ordinal parameters $\vec{\beta}\in{}^{<\omega}\ord$ defining $A$, i.e.,
        \[
        A=\{\alpha<\Theta\mid\mathsf{CDM}^{-}\models\phi(\alpha, \vec{\beta})\}.
        \]
        Let $\mathcal{Q}$ be a genericity iterate of $\mathcal{P}$ such that $\vec{\beta}\in\mathrm{ran}(\pi_{\mathcal{Q}, \infty})$.
        Let $B\subset\kappa_{\mathcal{Q}}$ be such that
        \[
        B = \{\xi<\kappa_{\mathcal{Q}}\mid\emptyset\Vdash^{\mathcal{V}_{\mathcal{Q}}}_{\mathrm{Col}(\omega, {<}\delta)}\mathsf{CDM}^{-}\models\phi[\pi_{\mathcal{Q}, \infty}(\xi), \vec{\beta}]\}.
        \]
        Then $B\in\mathcal{Q}$.
        We want to show that $A=\pi_{\mathcal{Q}, \infty}(B)$.
        Let $\alpha\in\kappa_{\infty}$.
        Take a genericity iterate $\mathcal{R}$ of $\mathcal{Q}$ such that $\alpha\in\mathrm{ran}(\pi_{\mathcal{R}, \infty})$.
        By \cref{stabilizing_parameters}, $\pi_{\mathcal{V}_{\mathcal{Q}}, \mathcal{V}_{\mathcal{R}}}(\vec{\beta})=\vec{\beta}$ and thus
        \[
        \pi_{\mathcal{Q}, \mathcal{R}}(B)=\{\xi<\kappa_{\mathcal{R}}\mid\emptyset\Vdash^{\mathcal{V}_{\mathcal{R}}}_{\mathrm{Col}(\omega, {<}\delta)}\mathsf{CDM}^{-}\models\phi[\pi_{\mathcal{R}, \infty}(\xi), \vec{\beta}]\}
        \]
        Then
        \[
        \alpha\in A\iff \pi^{-1}_{\mathcal{R}, \infty}(\alpha)\in \pi_{\mathcal{Q}, \mathcal{R}}(B)\iff \alpha\in\pi_{\mathcal{Q}, \infty}(B).
        \]
        Therefore, $A=\pi_{\mathcal{Q}, \infty}(B)\in\mathcal{M}_{\infty}$.
    \end{proof}

    Now it remains to show (3) in \cref{main_thm_on_CDM}. The next lemma is not directly used for the proof of \cref{main_thm_on_CDM}, but we prove it to motivate later argument.

    \begin{prop}\label{char_of_nu_in_V[g]}
        Let $\Theta=\Theta^{\mathsf{CDM}^{-}}$. Let $\nu$ be a measure on $\kappa_{\infty}$ of Mitchell order 0 in $\mathcal{M}_{\infty}$. Let $A\subset\Theta$ in $\mathcal{M}_{\infty}$. Then in $\mathcal{V}[g]$, $A\in\nu$ if and only if $A$ contains a club subset of $\Theta\cap\mathrm{Cof}(\omega)$.
    \end{prop}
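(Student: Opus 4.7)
The plan is to reflect the question down to an iterate using the direct-limit structure of $\mathcal{M}_\infty$, extract a natural club from the iteration data for the forward direction, and derive the converse from the ultrafilter property of $\nu$ together with the forward direction.

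First, I would use that $\mathcal{F}^*_g(\mathcal{P})$ is countably directed to pick a genericity iterate $\mathcal{Q}_0$ of $\mathcal{P}$ with both $A$ and $\nu$ in $\mathrm{ran}(\pi_{\mathcal{Q}_0,\infty})$; write $A=\pi_{\mathcal{Q}_0,\infty}(A_0)$ and $\nu=\pi_{\mathcal{Q}_0,\infty}(\nu_0)$, so that $\nu_0$ is a normal measure of Mitchell order $0$ on $\kappa_{\mathcal{Q}_0}$ in $\mathcal{Q}_0$, derived from a nice extender on the $\mathcal{Q}_0$-sequence. Elementarity of $\pi_{\mathcal{Q}_0,\infty}$ gives $A\in\nu\iff A_0\in\nu_0$, so the task reduces to producing, under the hypothesis $A_0\in\nu_0$, a club $C\subseteq\Theta$ in $\mathcal{V}[g]$ with $C\cap\mathrm{Cof}(\omega)\subseteq A$, and conversely.

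For the forward direction, my candidate club $C$ would consist of those $\alpha<\Theta$ arising as $\alpha=\sup_{n<\omega}\pi_{\mathcal{R}_n,\infty}(\gamma_n)$ for a countable chain of further genericity iterates $\mathcal{Q}_0=\mathcal{R}_0\to\mathcal{R}_1\to\cdots$ and a coherent sequence $\gamma_n<\kappa_{\mathcal{R}_n}$ chosen to represent a fixed $\nu_0$-large function (pushed forward at each stage). Each such $\alpha$ has cofinality $\omega$ in $\mathcal{V}[g]$. Applying \cref{pres_of_M_infty} to track the image of $A_0$ along the chain and \cref{stabilizing_parameters} to control the parameters, the hypothesis $A_0\in\nu_0$ combined with the Mitchell order $0$ assumption forces $\alpha\in A$; closedness and unboundedness of $C$ in $\Theta$ follow from the directed structure of $\mathcal{F}^*_g(\mathcal{P})$ and the ability to extend countable chains.

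For the backward direction, suppose $A$ contains a club $D\cap\mathrm{Cof}(\omega)$. If $A\notin\nu$, then $\Theta\setminus A\in\nu$ (complement in $\mathcal{M}_\infty$), and the forward direction applied to $\Theta\setminus A$ yields a second club $D'\cap\mathrm{Cof}(\omega)\subseteq\Theta\setminus A$. The canonical nature of the forward-direction construction, in which the club is definable from the iteration data of $\mathcal{Q}_0$, should ensure $D\cap D'\cap\mathrm{Cof}(\omega)\neq\emptyset$, contradicting disjointness of $A$ and $\Theta\setminus A$; a more delicate analysis may be required here if $\cf^{\mathcal{V}[g]}(\Theta)=\omega$, in which case one compares the two canonical clubs built for $A$ and $\Theta\setminus A$ directly against the iteration structure.

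The main obstacle is the forward direction: identifying the correct club $C$ and verifying $C\cap\mathrm{Cof}(\omega)\subseteq A$. The Mitchell order $0$ hypothesis is essential, since it guarantees that $\nu_0$ is simple enough to be tracked faithfully by the generic iteration data, with no interference from stronger measures in the Mitchell order. Technically the argument parallels the reflection technique of \cref{determinacy_in_CDM_2}, applied now to the combinatorics of a single normal measure rather than to cofinal sequences of sets of reals.
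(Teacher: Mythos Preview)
Your outline is in the right direction---reflect to an iterate, build a club from the direct-limit data, and derive the converse from the ultrafilter property---but the core of the forward direction is missing. You write that ``the hypothesis $A_0\in\nu_0$ combined with the Mitchell order $0$ assumption forces $\alpha\in A$,'' yet you never identify the mechanism, and your proposed club (suprema along countable chains of iterates, built from ``a fixed $\nu_0$-large function'') does not obviously work: knowing that each $\pi_{\mathcal{R}_n,\infty}(\gamma_n)$ lies in $A$ gives no reason for the supremum to lie in $A$, since $A$ need not be closed.

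The missing idea is to pass to the ultrapower $\ult(\mathcal{Q},\nu_{\mathcal{Q}})$. Because $\nu_{\mathcal{Q}}$ has Mitchell order $0$, the ordinal $\kappa_{\mathcal{Q}}$ is \emph{not} measurable in $\ult(\mathcal{Q},\nu_{\mathcal{Q}})$, so the direct-limit map $\pi_{\ult(\mathcal{Q},\nu_{\mathcal{Q}}),\infty}$ is continuous at $\kappa_{\mathcal{Q}}$. Coherence and positionality of the strategy give $\pi_{\ult(\mathcal{Q},\nu_{\mathcal{Q}}),\infty}(\xi)=\pi_{\mathcal{Q},\infty}(\xi)$ for all $\xi<\kappa_{\mathcal{Q}}$, hence $\pi_{\ult(\mathcal{Q},\nu_{\mathcal{Q}}),\infty}(\kappa_{\mathcal{Q}})=\sup\pi_{\mathcal{Q},\infty}[\kappa_{\mathcal{Q}}]=:\alpha_{\mathcal{Q}}$. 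Since $\pi_{\mathcal{Q},\infty}=\pi_{\ult(\mathcal{Q},\nu_{\mathcal{Q}}),\infty}\circ\pi_{\nu_{\mathcal{Q}}}$, one obtains the \emph{equivalence} $A\in\nu\iff\kappa_{\mathcal{Q}}\in\pi_{\nu_{\mathcal{Q}}}(A_{\mathcal{Q}})\iff\alpha_{\mathcal{Q}}\in A$, so the single set $\{\alpha_{\mathcal{Q}}:A\in\mathrm{ran}(\pi_{\mathcal{Q},\infty})\}$---which contains a club in $\Theta\cap\mathrm{Cof}(\omega)$---handles both directions simultaneously. Neither \cref{stabilizing_parameters} nor the reflection argument of \cref{determinacy_in_CDM_2} is needed here. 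Your concern about $\cf^{\mathcal{V}[g]}(\Theta)=\omega$ is also unfounded: countable directedness of $\mathcal{F}^*_g(\mathcal{P})$ forces $\cf^{\mathcal{V}[g]}(\kappa_\infty)>\omega$, since any countable cofinal sequence would lie in the range of a single $\pi_{\mathcal{Q},\infty}$, contradicting $\alpha_{\mathcal{Q}}<\kappa_\infty$.
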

    \begin{proof}
        For any $\mathcal{Q}\in\mathcal{F}^*_{g}(\mathcal{P}, \Sigma)$, let $\kappa_{\mathcal{Q}}=\pi_{\mathcal{P}, \mathcal{Q}}(\kappa)$ and $\alpha_{\mathcal{Q}}=\sup\pi_{\mathcal{Q}, \infty}[\kappa_{\mathcal{Q}}]$.
        Now take $\mathcal{Q}\in\mathcal{F}^*_g(\mathcal{P}, \Sigma)$ such that $A\in\mathrm{ran}(\pi_{\mathcal{Q}, \infty})$. Let $\nu_{\mathcal{Q}}, A_{\mathcal{Q}}\in\mathcal{Q}$ be the preimages of $\nu, A$ under $\pi_{\mathcal{Q}, \infty}$, respectively.

        \begin{claim}\label{key_claim}
            $\pi_{\ult(\mathcal{Q}, \nu_{\mathcal{Q}}), \infty}(\kappa_{\mathcal{Q}})=\alpha_{\mathcal{Q}}$.
        \end{claim}
        \begin{proof}
            Since $\kappa_{\mathcal{Q}}$ is not measurable in $\ult(\mathcal{Q}, \nu_{\mathcal{Q}})$,
            \[
            \pi_{\ult(\mathcal{Q}, \nu_{\mathcal{Q}}), \infty}(\kappa_{\mathcal{Q}})=\sup\pi_{\ult(\mathcal{Q}, \nu_{\mathcal{Q}}), \infty}[\kappa_{\mathcal{Q}}]
            \]
            The coherency of $\nu_{\mathcal{Q}}$ and the positionality of $\Sigma$ implies that $\Sigma_{\ult(\mathcal{Q}, \nu_{\mathcal{Q}})\vert\kappa_{\mathcal{Q}}}=\Sigma_{\mathcal{Q}\vert\kappa_{\mathcal{Q}}}$. Also, $\kappa_{\mathcal{Q}}$ is a cutpoint of $\mathcal{Q}$ and $\ult(\mathcal{Q}, \nu_{\mathcal{Q}})$. It follows that for any $\xi<\kappa_{\mathcal{Q}}$,
            \[
            \pi_{\ult(\mathcal{Q}, \nu_{\mathcal{Q}}), \infty}(\xi)=\pi_{\mathcal{Q}, \infty}(\xi).
            \]
            Therefore, $\sup\pi_{\ult(\mathcal{Q}, \nu_{\mathcal{Q}}), \infty}[\kappa_{\mathcal{Q}}]=\alpha_{\mathcal{Q}}$, which completes the proof of the claim.
        \end{proof}
        
        Note that $\pi_{\mathcal{Q}, \infty}=\pi_{\ult(\mathcal{Q}, \nu_{\mathcal{Q}}), \infty}\circ \pi_{\nu_{\mathcal{Q}}}$, where $\pi_{\nu_{\mathcal{Q}}}$ is the ultrapower map associated with $\nu_{\mathcal{Q}}$. This equality and \cref{key_claim} imply that
        \begin{align*}
        A\in\nu &\iff A_{\mathcal{Q}}\in\nu_{\mathcal{Q}} \iff \kappa_{\mathcal{Q}}\in \pi_{\nu_{\mathcal{Q}}}(A_{\mathcal{Q}})\\
        & \iff\pi_{\ult(\mathcal{Q}, \nu_{\mathcal{Q}}), \infty}(\kappa_{\mathcal{Q}})\in A \iff \alpha_{\mathcal{Q}}\in A.
        \end{align*}
        Therefore, $\{\alpha_{\mathcal{Q}}\mid \mathcal{Q}\in\mathcal{F}^*_g(\mathcal{P})\land A\in\mathrm{ran}(\pi_{\mathcal{Q}, \infty})\}\subset A$.

        \begin{claim}\label{omega-club}
            The set $\{\alpha_{\mathcal{Q}}\mid \mathcal{Q}\in\mathcal{F}^*_g(\mathcal{P})\land A\in\mathrm{ran}(\pi_{\mathcal{Q}, \infty})\}$ contains a club subset of $\kappa_{\infty}\cap\mathrm{Cof}(\omega)$ in $\mathcal{V}[g]$.
        \end{claim}
        \begin{proof}
            Fix a sequence $\langle\beta_{\xi}\mid\xi<\delta\rangle$ that is cofinal in $\kappa_{\infty}$. For each $\xi<\delta$, take $\mathcal{Q}_{\xi}\in\mathcal{F}^*_g(\mathcal{P})$ such that $\beta_{\xi}\in\mathrm{ran}(\pi_{\mathcal{Q}_{\xi}, \infty})$, which implies $\beta_{\xi}<\alpha_{\mathcal{Q}_{\xi}}$. Let $\mathcal{R}_0 = \mathcal{Q}_0$. Inductively, for each $\xi<\delta$, we let $\mathcal{R}_{\xi+1}$ be the common iterate of $\mathcal{R}_{\xi}$ and $\mathcal{Q}_{\xi+1}$, and for each limit ordinal $\lambda<\delta$, let $\mathcal{R}_{\lambda}$ be the direct limit of $\langle\mathcal{R}_\xi\mid\xi<\lambda\rangle$. Then $\{\alpha_{\mathcal{R}_{\xi}}\mid\xi<\delta\}$ is a club subset of $\kappa_{\infty}\cap\mathrm{Cof}(\omega)$ in the given set.
        \end{proof}

        This completes the proof of \cref{char_of_nu_in_V[g]}.
    \end{proof}

    We need to show the equivalence in \cref{char_of_nu_in_V[g]} in $\mathsf{CDM}^{-}$, not $\mathcal{V}[g]$. The problem is that the iteration embeddings to $\mathcal{M}_{\infty}$ are not in $\mathsf{CDM}^{-}$, so the club set we found above is not in $\mathsf{CDM}^{-}$. To solve this issue, we make use of notion called (strongly) condensing sets, which was originally introduced in the context of core model induction by the fourth author. We first introduce several notations.

    \begin{itemize}
        \item For any non-dropping $\Sigma$-iterate $\mathcal{Q}$ of $\mathcal{P}$, we define $\mathcal{Q}^{\mathsf{b}}=\mathcal{Q}\vert(\pi_{\mathcal{P}, \mathcal{Q}}(\kappa)^+)^{\mathcal{Q}}$, which is called the \emph{bottom part of $\mathcal{Q}$}.
        \item For any $X\in\power_{\omega_1}(\mathcal{M}^{\mathsf{b}}_{\infty})$, let $\mathcal{Q}_{X}=\cHull^{\mathcal{M}_{\infty}^{\mathsf{b}}}(X)$ and let $\tau_X\colon\mathcal{Q}_X\to\mathcal{M}_{\infty}^{\mathsf{b}}$ be the uncollapse map. Also, let $\kappa_X = \tau_{X}^{-1}(\kappa_{\infty})$.
        \item For any $X\in\power_{\omega_1}(\mathcal{M}_{\infty}^{\mathsf{b}})$, let $\Psi_X$ be the $\tau_X$-pullback strategy of $\Sigma_{\mathcal{M}_{\infty}^{\mathsf{b}}}$.
        Also, let $\mathcal{M}_{\infty}(\mathcal{Q}_X, \Psi_X)$ be the direct limit of all countable $\Psi_X$-iterates of $\mathcal{Q}_X$ under iteration maps and let $\pi^{\Psi_X}_{\mathcal{Q}_X, \infty}\colon\mathcal{Q}_X\to\mathcal{M}_{\infty}(\mathcal{Q}_X, \Psi_X)$ be the associated direct limit map.
        \item For any $X\subset Y\in\power_{\omega_1}(\mathcal{M}^{\mathsf{b}}_{\infty})$, we define $\tau_{X, Y}\colon\mathcal{Q}_X\to\mathcal{Q}_Y$ by $\tau_{X, Y}=\tau_Y^{-1}\circ\tau_X$.
    \end{itemize}

    \begin{dfn}
        For any $X\subset Y\in\power_{\omega_1}(\mathcal{M}_{\infty}^{\mathsf{b}})$, We say that \emph{$Y$ extends $X$} if
        \[
        \mathcal{Q}_Y = \{\tau_{X, Y}(f)(s)\mid f\in \mathcal{Q}_X\land f\colon[\kappa_X]^{<\omega}\to\mathcal{Q}_X\land s\in[\kappa_Y]^{<\omega}\}.
        \]
    \end{dfn}
    
    \begin{dfn}
        Let $X\in\power_{\omega_1}(\mathcal{M}_{\infty}^{\mathsf{b}})$. We say that $X$ is \emph{condensing} if for any $Y$ extending $X$, there is a unique elementary map $k^X_Y\colon\mathcal{Q}_Y\to\mathcal{M}_{\infty}^{\mathsf{b}}$ such that
        \begin{enumerate}
            \item $\tau_X = k^X_Y \circ \tau_{X, Y}$ and
            \item $k^X_Y\uphar\kappa_Y=\pi^{\Psi_Y}_{\mathcal{Q}_Y, \infty}\uphar\kappa_Y$.
        \end{enumerate}
        We also say that $X$ is \emph{strongly condensing} if whenever $Y$ extends $X$, $Y$ is condensing.
    \end{dfn}

    Now we show \cref{char_of_nu_in_CDM} assuming \cref{strongly_condensing} on the existence of strongly condensing sets.
    We will show \cref{strongly_condensing} in the next section, because it is a general fact about $\mathcal{M}_{\infty}^{\mathsf{b}}$ and its proof is independent of the other arguments.
    
    \begin{thm}\label{char_of_nu_in_CDM}
        Let $\Theta=\Theta^{\mathsf{CDM}^{-}}$. Let $\nu$ be a measure on $\kappa_{\infty}$ of Mitchell order 0 in $\mathcal{M}_{\infty}$. Let $A\subset\Theta$ in $\mathcal{M}_{\infty}$. Then in $\mathsf{CDM}^{-}$, $A\in\nu$ if and only if $A$ contains a club subset of $\Theta\cap\mathrm{Cof}(\omega)$.
    \end{thm}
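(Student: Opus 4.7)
The plan is to lift the proof of \cref{char_of_nu_in_V[g]} from $\mathcal{V}[g]$ into $\mathsf{CDM}^{-}$ using strongly condensing sets (whose existence is \cref{strongly_condensing}) in place of external genericity iterates. The role of $\mathcal{Q}\in\mathcal{F}^*_g(\mathcal{P})$ will be played by the countable hull $\mathcal{Q}_Y$, the role of $\pi_{\mathcal{Q},\infty}$ by the condensing map $k^{X_0}_Y$, and the role of $\alpha_{\mathcal{Q}}$ by $\alpha_Y:=\sup k^{X_0}_Y[\kappa_Y]$.

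First I would apply \cref{strongly_condensing} to fix a strongly condensing set $X_0\in\power_{\omega_1}(\mathcal{M}_{\infty}^{\mathsf{b}})$ with $A,\nu\in\mathrm{ran}(\tau_{X_0})$, writing $A=\tau_{X_0}(A_{X_0})$ and $\nu=\tau_{X_0}(\nu_{X_0})$. For each countable $Y$ with $X_0\subseteq Y\in\power_{\omega_1}(\mathcal{M}_{\infty}^{\mathsf{b}})$ extending $X_0$, set $A_Y=\tau_{X_0,Y}(A_{X_0})$ and $\nu_Y=\tau_{X_0,Y}(\nu_{X_0})$, so that $k^{X_0}_Y(A_Y)=A$ and $k^{X_0}_Y(\nu_Y)=\nu$ by condensing clause (1). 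Since $Y$ is countable in $\mathsf{CDM}^{-}$, $\alpha_Y$ has cofinality $\omega$ there.

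The heart of the argument is the analog of \cref{key_claim}: for every such $Y$, $A\in\nu\iff\alpha_Y\in A$. By elementarity of $k^{X_0}_Y$, $A\in\nu\iff A_Y\in\nu_Y\iff\kappa_Y\in\pi_{\nu_Y}(A_Y)$, where $\pi_{\nu_Y}$ is the ultrapower map. Using condensing condition (2) — that $k^{X_0}_Y\uphar\kappa_Y=\pi^{\Psi_Y}_{\mathcal{Q}_Y,\infty}\uphar\kappa_Y$ — together with positionality of $\Psi_Y$ and coherence of $\nu_Y$ (exactly as in the proof of \cref{key_claim}), I would show that the map $\pi^{\Psi_Y}_{\mathcal{Q}_Y,\infty}$ factors as $\lambda\circ\pi_{\nu_Y}$ for a direct limit map $\lambda\colon\ult(\mathcal{Q}_Y,\nu_Y)\to\mathcal{M}_{\infty}^{\mathsf{b}}$ satisfying $\lambda(\kappa_Y)=\alpha_Y$ (using that $\kappa_Y$ is a cutpoint of $\ult(\mathcal{Q}_Y,\nu_Y)$ by the Mitchell-order-$0$ hypothesis). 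The uniqueness of $k^{X_0}_Y$ in the definition of condensing should then force $k^{X_0}_Y=\lambda\circ\pi_{\nu_Y}$, and applying $\lambda$ yields $A\in\nu\iff\kappa_Y\in\pi_{\nu_Y}(A_Y)\iff\alpha_Y=\lambda(\kappa_Y)\in\lambda(\pi_{\nu_Y}(A_Y))=k^{X_0}_Y(A_Y)=A$.

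Finally, I would verify inside $\mathsf{CDM}^{-}$ that the set
\[
C=\{\alpha_Y : X_0\subseteq Y\in\power_{\omega_1}(\mathcal{M}_{\infty}^{\mathsf{b}}),\ Y\text{ extends }X_0\}
\]
contains a club subset of $\Theta\cap\mathrm{Cof}(\omega)$. Unboundedness follows because hulls can be enlarged to absorb any prescribed ordinal below $\Theta$, and $\omega$-closure uses that for an increasing $\omega$-sequence $\langle Y_n:n<\omega\rangle$ of such $Y$'s the union $Y=\bigcup_n Y_n$ again extends $X_0$ with $\alpha_Y=\sup_n\alpha_{Y_n}$. Combined with the preceding equivalence, $A\in\nu$ forces $C\subseteq A$, so $A$ contains the club of $\omega$-cofinals $C$; conversely, if $A\notin\nu$, then $\Theta\setminus A\in\nu$ and hence $C\subseteq\Theta\setminus A$, preventing $A$ from containing any club of $\omega$-cofinal ordinals. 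The main obstacle is the factorization $k^{X_0}_Y=\lambda\circ\pi_{\nu_Y}$, since condensing clause (2) only directly controls $k^{X_0}_Y\uphar\kappa_Y$; extracting $\lambda$ and verifying $\lambda(\kappa_Y)=\alpha_Y$ will require combining the uniqueness clause of condensing with the standard ultrapower-then-direct-limit decomposition of $\pi^{\Psi_Y}_{\mathcal{Q}_Y,\infty}$.
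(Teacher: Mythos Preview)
Your overall plan---replace external genericity iterates by countable hulls $\mathcal{Q}_Y$ and use strongly condensing sets---is exactly the paper's strategy. But the execution of the key equivalence has a real gap. You try to identify $k^{X_0}_Y$ with $\lambda\circ\pi_{\nu_Y}$ by invoking the uniqueness clause in the definition of condensing. Uniqueness, however, requires verifying \emph{both} properties (1) and (2) for the candidate map. Property (2) is fine, but property (1) asks for $\tau_{X_0}=(\lambda\circ\pi_{\nu_Y})\circ\tau_{X_0,Y}$, i.e.\ $\tau_{X_0}=\pi^{\Psi_Y}_{\mathcal{Q}_Y,\infty}\circ\tau_{X_0,Y}$. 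There is no reason this should hold: if $\pi^{\Psi_Y}_{\mathcal{Q}_Y,\infty}$ and $\tau_Y$ agreed globally the condensing notion would be vacuous. So the uniqueness argument is circular---you would need exactly the information about $\lambda$ above $\kappa_Y$ that you are trying to extract.

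The paper avoids this by going one level up. Since $X$ is \emph{strongly} condensing, each $Y$ extending $X$ is itself condensing. Using the countable completeness of $\nu$ established in \cref{char_of_nu_in_V[g]} (which you do not invoke), one realizes $\ult(\mathcal{Q}_Y,\nu_Y)$ as $\mathcal{Q}_Z$ for some countable $Z$ with $\tau_{Y,Z}=\pi_{\nu_Y}$; then $Z$ extends $Y$, and condensing of $Y$ produces $k^Y_Z$. Now property (1) for $k^Y_Z$ reads $k^Y_Z\circ\tau_{Y,Z}=\tau_Y$, i.e.\ $k^Y_Z\circ\pi_{\nu_Y}=\tau_Y$, which \emph{immediately} gives $k^Y_Z(\pi_{\nu_Y}(A_Y))=\tau_Y(A_Y)=A$ and $k^Y_Z\uphar\kappa_Y=\tau_Y\uphar\kappa_Y$, whence $k^Y_Z(\kappa_Y)=\sup\tau_Y[\kappa_Y]=\alpha_Y$. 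This is why the paper sets $\alpha_Y=\sup\tau_Y[\kappa_Y]=\sup(\Theta\cap\Hull^{\mathcal{M}_\infty^{\mathsf{b}}}(Y))$ rather than your $\sup k^{X_0}_Y[\kappa_Y]$: the former is both the quantity that drops out of the computation and the one that is transparently definable in $\mathsf{CDM}^-$ for the club argument (\cref{omega-club_2}). In short, the missing idea is to apply condensing at $Y$ (toward the ultrapower $Z$), not at $X_0$ (toward $Y$).
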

    \begin{proof}
        Let $A\subset\Theta$ in $\mathcal{M}_{\infty}$.
        Then there is a genericity iterate $\mathcal{Q}$ of $\mathcal{P}$ such that $A\in\mathrm{ran}(\pi_{\mathcal{Q}, \infty})$.
        By \cref{strongly_condensing}, $X:=\pi_{\mathcal{Q}, \infty}[\mathcal{Q}^{\mathsf{b}}]$ is strongly condensing.
        Take any $Y\in\power_{\omega_1}(\mathcal{M}_{\infty}^{\mathsf{b}})$ such that $Y$ extends $X$. For any such $Y$, let $\nu_Y = \tau_Y^{-1}(\nu)$ and $\alpha_Y=\sup\tau_Y[\kappa_Y]=\sup(\Theta\cap\mathrm{Hull}^{\mathcal{M}_{\infty}^{\mathsf{b}}}(Y))$.
        
        \begin{claim}\label{key_claim_2}
            There is a $Z\in\power_{\omega_1}(\mathcal{M}_{\infty}^{\mathsf{b}})$ such that  $\mathcal{Q}_Z=\ult(\mathcal{Q}_Y, \nu_Y)$ and $\tau_{Y, Z}=\pi_{\nu_Y}\colon\mathcal{Q}_Y\to\ult(\mathcal{Q}_Y, \nu_Y)$, which implies that $Z$ extends $Y$. Moreover, for such a $Z$, letting $k^Y_Z\colon\mathcal{Q}_Z\to\mathcal{M}_{\infty}^{\mathsf{b}}$ be the elementary map witnessing that $Y$ is condensing, $k^Y_Z(\kappa_Y)=\alpha_Y$.
        \end{claim}
        \begin{proof}
            Note that $\nu$ is countably complete by \cref{char_of_nu_in_V[g]}. Then we can take a factor map $\sigma\colon\ult(\mathcal{Q}_Y, \nu_Y)\to\mathcal{M}_{\infty}$ such that $\pi_Y=\sigma\circ\pi_{\nu_Y}$. Let $Z=\mathrm{ran}(\sigma)$. Then $Z$ clearly satisfies the desired property.
            
            Now notice that $\pi^{\Psi_Z}_{\mathcal{Q}_Z, \infty}$ is continuous at $\kappa_Y$ because $\kappa_Y$ is not measurable in $\mathcal{Q}_Z=\ult(\mathcal{Q}_Y, \nu_Y)$. Also, $k^Y_Z\uphar\kappa_Z=\pi^{\Psi_Z}_{\mathcal{Q}_Z, \infty}\uphar\kappa_Z$ as $Y$ is condensing. Then it follows from $\kappa_Z=\pi_{\nu_Y}(\kappa_Y)>\kappa_Y$ that
            \[
            k^Y_Z(\kappa_Y)=\sup k^Y_Z[\kappa_Y].
            \]
            Moreover, since $\tau_Y= k^Y_Z\circ\tau_{Y, Z}$ and $\tau_{Y, Z}=\pi_{\nu_Y}$ has critical point $\kappa_Y$, we have
            \[
            \sup k^Y_Z [\kappa_Y] = \sup\tau_Y[\kappa_Y],
            \]
            which completes the proof of the claim.
        \end{proof}
        
        It follows from \cref{key_claim_2} that
        \begin{align*}
            A\in\nu &\iff \tau_Y^{-1}(A)\in\nu_Y \iff \kappa_Y\in \pi_{\nu_Y}(\tau_Y^{-1}(A))\\
            & \iff k^Z_Y(\kappa_Y)\in A \iff \alpha_Y\in A.
        \end{align*}
        Therefore, $\{\alpha_Y\mid Y\in\power_{\omega_1}(\mathcal{M}_{\infty}^{\mathsf{b}})\land\text{$Y$ extends $X$}\}\subset A$.

        \begin{claim}\label{omega-club_2}
            The set $\{\alpha_Y\mid Y\in\power_{\omega_1}(\mathcal{M}_{\infty}^{\mathsf{b}})\land\text{ $Y$ extends $X$}\}$ contains a club subset of $\Theta\cap\mathrm{Cof}(\omega)$ in $\mathsf{CDM}^{-}$.
        \end{claim}
        \begin{proof}
            For $\eta<\Theta$, let
            \[
            f(\eta)=\sup\{\alpha<\Theta\mid \alpha\in\mathrm{Hull}^{\mathcal{M}_{\infty}^{\mathsf{b}}}(\eta\cup X)\}.
            \]
            Let $C=\{\eta<\Theta\mid f[\eta]\subset\eta\land\cf(\eta)=\omega\}$. $C$ is clearly a club subset of $\Theta\cap\mathrm{Cof}(\omega)$ in $\mathsf{CDM}^{-}$. We shall show that for any $\eta\in C$, $\eta$ is $\alpha_Y$ for some $Y\in\power(\mathcal{M}_{\infty}^{\mathsf{b}})$ such that $Y$ extends $X$. Now fix $\eta\in C$ and take a countable cofinal subset $D\subset\eta$. Then let $Y^*:=D\cup X$. Clearly $\eta\leq\sup(\Theta\cap\mathrm{Hull}^{\mathcal{M}_{\infty}^{\mathsf{b}}}(Y^*))=\alpha_{Y^*}$. On the other hand, if $\alpha<\alpha_{Y^*}$, then $\alpha\in\mathrm{Hull}^{\mathcal{M}_{\infty}^{\mathsf{b}}}(\beta\cup X)$ for some $\beta\in D$ and thus $\alpha\leq f(\beta)<\eta$ by the choice of $\eta$. Hence, $\eta\leq\alpha_{Y^*}$. We have just shown that $\eta=\alpha_{Y^*}$, so it suffices to find $Y$ extending $X$ such that $\alpha_{Y}=\alpha_{Y^*}$. Let $E$ be the extender of length $\kappa_{Y^*}$ induced by $\tau_{X, Y^*}$. Then let $\sigma\colon\ult(\mathcal{Q}_X, E)\to\mathcal{Q}_{Y^*}$ be the factor map and set $Y=\mathrm{ran}(\tau_{Y^*}\circ\sigma)$. Then $Y$ extends $X$. As $\crit(\sigma)\geq\kappa_{Y^*}$, $\kappa_Y=\kappa_{Y^*}$ and $\alpha_Y = \alpha_{Y^*}$.
        \end{proof}

        This completes the proof of \cref{char_of_nu_in_CDM} except for showing \cref{strongly_condensing}.
    \end{proof}

    \subsection{Existence of condensing sets}

    Now our goal is to show \cref{strongly_condensing}.
    To give an easy but useful characterization of condensing sets, we introduce one more technical notion.

    \begin{dfn}
        Let $X\in\power_{\omega_1}(\mathcal{M}_{\infty}^{\mathsf{b}})$ and let $A\in\mathcal{M}_{\infty}^{\mathsf{b}}$. Then we write
        \begin{align*}
            T_{X, A} &= \{\langle\phi, s\rangle\mid s\in[\kappa_X]^{<\omega}\land\mathcal{M}_{\infty}^{\mathsf{b}}\models\phi[\tau_X(s), A]\}\\
            T^*_{X, A} &= \{\langle\phi, s\rangle\mid s\in[\kappa_X]^{<\omega}\land\mathcal{M}_{\infty}^{\mathsf{b}}\models\phi[\pi^{\Psi_X}_{\mathcal{Q}_X, \infty}(s), A]\}
        \end{align*}
        We say that \emph{$X$ has $A$-condensation} if whenever $Y$ extends $X$, $T_{Y, A}=T^*_{Y, A}$.
    \end{dfn}

    \begin{rem}
        Note that if $A\in\mathrm{ran}(\tau_X)$, then
        \[
        T_{X, A} = \{\langle\phi, s\rangle\mid s\in[\kappa_X]^{<\omega}\land\mathcal{Q}_X\models\phi[s, \tau_X^{-1}(A)]\}
        \]
        and thus $T_{X, A}\in\mathcal{Q}_X$.
        So, if $X\subset Y$ then $\tau_{X, Y}(T_{X, A})=T_{Y, A}$ by the elementarity of $\tau_{X, Y}$.
        On the other hand, we do not know $T^*_{X, A}\in\mathcal{Q}_X$ a priori.
        Even if $T^*_{X, A}\in\mathcal{Q}_X$, the elementarity of $\tau_{X, Y}$ does not imply $\tau_{X, Y}(T^*_{X, A})= T^*_{Y, A}$.
    \end{rem}

    \begin{lem}\label{char_of_condensing}
        Let $X\in\power_{\omega_1}(\mathcal{M}_{\infty}^{\mathsf{b}})$.
        Then $X$ is condensing if and only if $X$ has $A$-condensation for any $A\in X$.
    \end{lem}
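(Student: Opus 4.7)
The forward direction of the lemma is a diagram chase using the witness map from condensing, and the backward direction constructs $k^X_Y$ via the canonical factorization of elements of $\mathcal{Q}_Y$. For the forward direction, assume $X$ is condensing, let $A\in X$, and let $Y$ extend $X$; set $k:=k^X_Y$ and $j:=\pi^{\Psi_Y}_{\mathcal{Q}_Y,\infty}$. Since $A\in X\subseteq Y$, the element $\bar A:=\tau_Y^{-1}(A)\in\mathcal{Q}_Y$ equals $\tau_{X,Y}(\tau_X^{-1}(A))$ via $\tau_{X,Y}=\tau_Y^{-1}\circ\tau_X$, and property~(1) of condensing then gives $k(\bar A)=\tau_X(\tau_X^{-1}(A))=A$. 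For any formula $\phi$ and $s\in[\kappa_Y]^{<\omega}$, the chain
\[
\mathcal{M}_\infty^{\mathsf{b}}\models\phi[\tau_Y(s),A]\iff\mathcal{Q}_Y\models\phi[s,\bar A]\iff\mathcal{M}_\infty^{\mathsf{b}}\models\phi[k(s),k(\bar A)]\iff\mathcal{M}_\infty^{\mathsf{b}}\models\phi[j(s),A],
\]
obtained respectively by $\tau_Y$-elementarity, $k$-elementarity, and property~(2), establishes $T_{Y,A}=T^*_{Y,A}$.

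For the backward direction, assume $X$ has $A$-condensation for every $A\in X$ and let $Y$ extend $X$. Every $b\in\mathcal{Q}_Y$ has the form $\tau_{X,Y}(f)(s)$ for some function $f\in\mathcal{Q}_X$ with $\dom(f)=[\kappa_X]^{<\omega}$ and some $s\in[\kappa_Y]^{<\omega}$; any $k$ satisfying (1) and (2) is forced by elementarity to send $\tau_{X,Y}(f)(s)\mapsto\tau_X(f)(j(s))$, which simultaneously settles uniqueness and pins down the definition $k(\tau_{X,Y}(f)(s)):=\tau_X(f)(j(s))$ for $k:=k^X_Y$. After padding so that all parameters share a common $s$, well-definedness and elementarity both reduce to the equivalence
\[
\mathcal{Q}_Y\models\phi[\tau_{X,Y}(f_1)(s),\ldots,\tau_{X,Y}(f_m)(s)]\iff\mathcal{M}_\infty^{\mathsf{b}}\models\phi[\tau_X(f_1)(j(s)),\ldots,\tau_X(f_m)(j(s))]
\]
for any formula $\phi$ and $f_1,\ldots,f_m\in\mathcal{Q}_X$. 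I plan to collapse the tuple into a single $G\in\mathcal{Q}_X$ with $G(t)=\langle f_1(t),\ldots,f_m(t)\rangle$; elementarity of $\tau_{X,Y}$ and of $\tau_X$ then lets one rewrite each $\tau_{X,Y}(f_i)(s)$ and each $\tau_X(f_i)(j(s))$ as the $i$-th projection of $\tau_{X,Y}(G)(s)$ and of $\tau_X(G)(j(s))$ respectively, and applying $\tau_Y$-elementarity to the left side reduces the whole equivalence to $\tau_X(G)$-condensation, $\mathcal{M}_\infty^{\mathsf{b}}\models\Psi[\tau_Y(s),\tau_X(G)]\iff\mathcal{M}_\infty^{\mathsf{b}}\models\Psi[j(s),\tau_X(G)]$, for a derived formula $\Psi$.

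The hard part is that $\tau_X(G)$ lies in $\mathrm{ran}(\tau_X)=\Hull^{\mathcal{M}_\infty^{\mathsf{b}}}(X)$ but need not lie in $X$, so the hypothesis does not apply directly. I plan to overcome this by upgrading ``$A$-condensation for every $A\in X$'' to ``$A$-condensation for every $A\in\mathrm{ran}(\tau_X)$'': any such $A$ is $t^{\mathcal{M}_\infty^{\mathsf{b}}}(\vec x)$ for some term $t$ and some $\vec x\in X^{<\omega}$, so $A$-condensation reduces to multi-parameter condensation for $\vec x$; this I obtain by induction on $|\vec x|$, exploiting the definable pairing on $\mathcal{M}_\infty^{\mathsf{b}}$ (available since the lbr hod premouse has a definable well-ordering) to fold each new coordinate into a single parameter against which the single-parameter hypothesis applies. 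Once this upgrade is in place, properties~(1) and~(2) for $k$ are immediate: applying the defining formula to the constant function $f_b(t):=b$ for $b\in\mathcal{Q}_X$ yields $k\circ\tau_{X,Y}=\tau_X$, and applying it to the first-coordinate projection $f(\langle t_0\rangle):=t_0$ yields $k\uphar\kappa_Y=j\uphar\kappa_Y$.
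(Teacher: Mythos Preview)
Your proof is correct and takes the same approach as the paper: the same diagram chase for the forward direction, and the same definition $k^X_Y(\tau_{X,Y}(f)(s)):=\tau_X(f)(j(s))$ for the backward direction, verified via $A$-condensation for $A\in\mathrm{ran}(\tau_X)$. You are somewhat more explicit than the paper---packing several $f_i$ into a single $G$, and sketching the upgrade from ``$A\in X$'' to ``$A\in\mathrm{ran}(\tau_X)$'' where the paper just writes ``it easily follows''---but the route is identical.
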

    \begin{proof}
        Suppose that $X$ is condensing.
        Let $Y\in\power_{\omega_1}(\mathcal{M}_{\infty}^{\mathsf{b}})$ extending $X$ and let $k^X_Y\colon\mathcal{Q}_Y\to\mathcal{M}_{\infty}^{\mathsf{b}}$ be the unique elementary map witnessing it.
        Also, let $A\in X$ and write $A_X = \tau_X^{-1}(A)$.
        Then for any formula $\phi$ and $s\in[\kappa_X]^{<\omega}$,
        \begin{align*}
        \langle\phi, s\rangle\in T_{Y, A} &\iff \mathcal{M}_{\infty}^{\mathsf{b}}\models\phi[\tau_Y(s), \tau_X(A_X)]\\
        &\iff \mathcal{Q}_Y\models[s, \tau_{X, Y}(A_X)]\;\;\text{by the elementarity of $\tau_Y$}\\
        &\iff \mathcal{M}_{\infty}^{\mathsf{b}}\models\phi[k^X_Y(s), (k^X_Y\circ\tau_{X, Y})(A_X)]\;\;\text{by the elementarity of $k^X_Y$}\\
        &\iff \mathcal{M}_{\infty}^{\mathsf{b}}\models\phi[\pi^{\Psi_Y}_{\mathcal{Q}_Y, \infty}(s), \tau_X(A_X)]\;\;\text{by the property of $k^X_Y$}\\
        &\iff \langle\phi, s\rangle\in T^*_{Y, A}.
        \end{align*}
        Therefore, $X$ has $A$-condensation.
        
        To show the reverse direction, suppose that $X$ has $A$-condensation for any $A\in X$.
        It easily follows that $X$ has $A$-condensation for any $A\in\mathrm{ran}(\tau_X)$.
        Let $Y\in\power_{\omega_1}(\mathcal{M}_{\infty}^{\mathsf{b}})$ extending $X$. We define $k^X_Y\colon\mathcal{Q}_Y\to\mathcal{M}_{\infty}^{\mathsf{b}}$ by
        \[
            k^X_Y(\tau_{X, Y}(f)(s))=\tau_X(f)(\pi^{\Psi_Y}_{\mathcal{Q}_Y, \infty}(s))
        \]
        for any $f\in\mathcal{Q}_X$ and $s\in[\kappa_Y]^{<\omega}$. This is a well-defined elementary embedding; for any formula $\phi$ and $s\in[\kappa_Y]^{<\omega}$,
        \begin{align*}
            \mathcal{Q}_Y\models\phi[\tau_{X, Y}(f)(s)] & \iff \mathcal{M}_{\infty}^{\mathsf{b}}\models\phi[\tau_X(f)(\tau_Y(s))]\\
            & \iff \langle\phi^*, s\rangle \in T_{Y, \tau_X(f)}=T^*_{Y, \tau_X(f)}\\
            & \iff \mathcal{M}_{\infty}^{\mathsf{b}}\models\phi[\tau_X(f)(\pi^{\Psi_Y}_{\mathcal{Q}_Y, \infty}(s))],
        \end{align*}
        where $\phi^*$ is a formula such that $\phi^*[\tau_Y(s), \tau_X(f)]\equiv\phi[\tau_X(f)(\tau_Y(s))]$.
    \end{proof}

    We are ready to prove the following key theorem on condensing sets.
    
    \begin{thm}\label{condensing}
        $\pi_{\mathcal{P}, \infty}[\mathcal{P}^{\mathsf{b}}]$ is condensing.
    \end{thm}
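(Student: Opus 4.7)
The plan is to apply \cref{char_of_condensing} and show that $X := \pi_{\mathcal{P}, \infty}[\mathcal{P}^{\mathsf{b}}]$ has $A$-condensation for every $A \in X$. So fix such an $A$, writing $A = \pi_{\mathcal{P}, \infty}(A_{\mathcal{P}})$ with $A_{\mathcal{P}} \in \mathcal{P}^{\mathsf{b}}$, and let $Y \in \power_{\omega_1}(\mathcal{M}_{\infty}^{\mathsf{b}})$ extend $X$. The task is to verify $T_{Y, A} = T^*_{Y, A}$.

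First, using countable directedness of $\mathcal{F}^*_g(\mathcal{P})$ (which follows from the directedness of $\Sigma$), I would capture $Y$ inside a genuine iterate: pick $\mathcal{R} \in \mathcal{F}^*_g(\mathcal{P})$ with $Y \subset \pi_{\mathcal{R}, \infty}[\mathcal{R}^{\mathsf{b}}]$. The factor map $\sigma := \pi_{\mathcal{R}, \infty}^{-1} \circ \tau_Y \colon \mathcal{Q}_Y \to \mathcal{R}^{\mathsf{b}}$ is elementary, satisfies $\tau_Y = \pi_{\mathcal{R}, \infty} \circ \sigma$, and sends $\tau_Y^{-1}(A)$ to $\pi_{\mathcal{P}, \mathcal{R}}(A_{\mathcal{P}})$. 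Elementarity of $\pi_{\mathcal{R}, \infty}$ then translates $T_{Y, A}$ into the theory of $\sigma[\kappa_Y]$ in $\mathcal{R}^{\mathsf{b}}$ with parameter $\pi_{\mathcal{P}, \mathcal{R}}(A_{\mathcal{P}})$.

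Next, I would identify $\Psi_Y$ as a pullback. Because $\tau_Y = \pi_{\mathcal{R}, \infty} \circ \sigma$ and pullbacks compose, and because the $\pi_{\mathcal{R}, \infty}$-pullback of $\Sigma_{\mathcal{M}_{\infty}^{\mathsf{b}}}$ equals $\Sigma_{\mathcal{R}^{\mathsf{b}}}$ by positionality, $\Psi_Y$ is precisely the $\sigma$-pullback of $\Sigma_{\mathcal{R}^{\mathsf{b}}}$. Copying $\Psi_Y$-trees on $\mathcal{Q}_Y$ through $\sigma$ yields $\Sigma_{\mathcal{R}^{\mathsf{b}}}$-trees on $\mathcal{R}^{\mathsf{b}}$, producing a canonical factor map $\tilde\sigma \colon \mathcal{M}_{\infty}(\mathcal{Q}_Y, \Psi_Y) \to \mathcal{M}_{\infty}^{\mathsf{b}}$ with $\tilde\sigma \circ \pi^{\Psi_Y}_{\mathcal{Q}_Y, \infty} = \pi_{\mathcal{R}, \infty} \circ \sigma = \tau_Y$.

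The main obstacle is turning this commutative diagram into the literal equality $T_{Y, A} = T^*_{Y, A}$. The evaluation in $T^*_{Y, A}$ treats $\pi^{\Psi_Y}_{\mathcal{Q}_Y, \infty}(s)$ as ordinals of $\mathcal{M}_{\infty}^{\mathsf{b}}$, so one must argue that $\tilde\sigma$ fixes the ordinals in $\pi^{\Psi_Y}_{\mathcal{Q}_Y, \infty}[\kappa_Y]$ strongly enough to preserve the theory with parameter $A$. The crucial point is that $A$ lives in $X = \pi_{\mathcal{P}, \infty}[\mathcal{P}^{\mathsf{b}}]$, so every auxiliary iteration used to compare $\pi^{\Psi_Y}_{\mathcal{Q}_Y, \infty}$ with $\tau_Y$ fixes $A$; a variant of the argument behind \cref{stabilizing_parameters}, applied now to the $\Psi_Y$-direct-limit system, supplies the required identification. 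The detailed bookkeeping — verifying that the tree-copying commutes with direct limits and that the relevant $\Psi_Y$-iterations remain below the first Woodin of each model — will use positionality, directedness, and segmental normality from excellence. Once this is settled, $T_{Y, A} = T^*_{Y, A}$ follows, establishing $A$-condensation, and so \cref{char_of_condensing} yields that $X$ is condensing.
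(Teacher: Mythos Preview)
Your approach attempts a direct verification that $T_{Y,A}=T^*_{Y,A}$ via copying and factor maps, but the step you flag as the ``main obstacle'' is a genuine gap that your sketch does not close. The commutative square $\tilde\sigma\circ\pi^{\Psi_Y}_{\mathcal{Q}_Y,\infty}=\tau_Y$ tells you only that $\mathcal{M}_\infty(\mathcal{Q}_Y,\Psi_Y)\models\phi[\pi^{\Psi_Y}_{\mathcal{Q}_Y,\infty}(s),\tilde\sigma^{-1}(A)]$ iff $\mathcal{M}_\infty^{\mathsf{b}}\models\phi[\tau_Y(s),A]$; to get $T^*_{Y,A}$ you must evaluate $\phi[\pi^{\Psi_Y}_{\mathcal{Q}_Y,\infty}(s),A]$ in $\mathcal{M}_\infty^{\mathsf{b}}$ itself, which requires knowing that $\tilde\sigma$ is the identity on the relevant ordinals and on $A$. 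The appeal to \cref{stabilizing_parameters} does not supply this: that claim concerns iteration maps between genericity iterates of $\mathcal{P}$, whereas $\tilde\sigma$ is a factor map out of a direct limit of $\Psi_Y$-iterates of $\mathcal{Q}_Y$, and $\mathcal{Q}_Y$ is in general only an extender ultrapower of $\mathcal{P}^{\mathsf{b}}$ (by the $(\kappa,\kappa_Y)$-extender derived from $\tau_{X,Y}$), not a $\Sigma$-iterate. There is no evident reason $\tilde\sigma$ should fix ordinals below $\kappa_\infty$, and indeed if it did the theorem would be trivial.

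The paper proceeds entirely differently: it argues by contradiction. Assuming some $A\in X$ lacks $A$-condensation, one builds an $\omega$-sequence $\langle\mathcal{P}_i,X_i,Y_i\rangle$ with $X_i=\pi_{\mathcal{P}_i,\infty}[\mathcal{P}_i^{\mathsf{b}}]$, each $Y_i$ extending $X_i$ witnessing $T_{Y_i,A}\neq T^*_{Y_i,A}$, and $Y_i\subset X_{i+1}$. One then lifts the hull maps $\tau_{X_i,Y_i}$ and $\tau_{Y_i,X_{i+1}}$ to maps on the full $\mathcal{P}_i$'s and performs a simultaneous genericity iteration of the whole diagram down to models $\mathcal{P}_{i,\delta},\mathcal{Q}^+_{i,\delta}$ all having the same derived model. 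The horizontal direct limit $\mathcal{P}_\infty$ embeds into $\mathcal{M}_\infty$ and is therefore well-founded; this forces the ordinal parameters defining $T^*_{X_i,A}$ in the common derived model to eventually stabilize, which yields $T_{Y_i,A}=T^*_{Y_i,A}$ for large $i$ --- a contradiction. The well-foundedness of $\mathcal{P}_\infty$ is precisely the missing ingredient that substitutes for the identity-of-$\tilde\sigma$ statement you would need.
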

    \begin{proof}
        Suppose toward a contradiction that $\pi_{\mathcal{P}, \infty}[\mathcal{P}^{\mathsf{b}}]$ is not condensing.
        Then by \cref{char_of_condensing}, we can take $A\in\pi_{\mathcal{P}, \infty}[\mathcal{P}^{\mathsf{b}}]$ such that $\pi_{\mathcal{P}, \infty}[\mathcal{P}^{\mathsf{b}}]$ does not have $A$-condensation. We will inductively construct a sequence $\langle \mathcal{P}_i, X_i, Y_i\mid i<\omega\rangle$ such that for all $i<\omega$,
        \begin{enumerate}
            \item $\mathcal{P}_i \in \mathcal{F}^*_g(\mathcal{P})$,
            \item $X_i\subset Y_i\subset X_{i+1}\in\power_{\omega_1}(\mathcal{M}_{\infty}^{\mathsf{b}})$,
            \item $X_i=\pi_{\mathcal{P}_i, \infty}[\mathcal{P}_i^{\mathsf{b}}]$,
            \item $X_i$ does not have $A$-condensation,
            \item $Y_i$ extends $X_i$ and $T_{Y_i, A}\neq T^*_{Y_i, A}$.
        \end{enumerate}
        First, let $\mathcal{P}_0=\mathcal{P}, X_0=\pi_{\mathcal{P}, \infty}[\mathcal{P}^{\mathsf{b}}]$. Also, choose $Y_0$ extending $X_0$ such that $T_{Y_0, A}\neq T^*_{Y_0, A}$. Clearly $\mathcal{P}_0, X_0, Y_0$ satisfy the above conditions. Next suppose that we have defined $\mathcal{P}_i, X_i, Y_i$ for some $i<\omega$. Then since $Y_i$ is a countable subset of $\mathcal{M}_{\infty}^{\mathsf{b}}$, one can find $\mathcal{P}_{i+1}\in\mathcal{F}^*_g(\mathcal{P}_i)$ such that $Y_i\subset\mathrm{ran}(\pi_{\mathcal{P}_{i+1}, \infty})$.
        Let $X_{i+1}=\pi_{\mathcal{P}_{i+1}, \infty}[\mathcal{P}_{i+1}^{\mathsf{b}}]$.
        Then $\mathcal{P}_{i+1}$ and $X_{i+1}$ satisfy the conditions (1)--(3).
        \begin{claim}
            $X_{i+1}$ does not have $A$-condensation.
        \end{claim}
        \begin{proof}
            Let $\mathcal{R}$ be a genericity iteration of $\mathcal{P}$ above $\kappa_{X_{i+1}}$. Let $A_{\mathcal{P}}=\pi^{-1}_{\mathcal{P}, \infty}(A)$. Then we have
            \begin{align*}
                \pi_{\mathcal{V}_{\mathcal{P}}, \mathcal{V}_{\mathcal{R}}}(A) &= \pi_{\mathcal{V}_{\mathcal{P}}, \mathcal{V}_{\mathcal{R}}}(\pi_{\mathcal{P}, \infty}(A_{\mathcal{P}}))\\
                & =\pi_{\mathcal{R}, \infty}(\pi_{\mathcal{P}, \mathcal{R}}(A_{\mathcal{P}}))\\
                & =\pi_{\mathcal{P}, \infty}(A_{\mathcal{P}})=A.
            \end{align*}
            Note that this calculation is the same as in \cref{stabilizing_parameters}. Then by the elementarity of $\pi_{\mathcal{V}_{\mathcal{P}}, \mathcal{V}_{\mathcal{R}}}$, $\pi_{\mathcal{R}, \infty}[\mathcal{R}^{\mathsf{b}}]$ does not have $A$-condensation. Since $\pi_{\mathcal{P}_{i+1}, \mathcal{R}}$ does not change the bottom part, $X_{i+1}=\pi_{\mathcal{R}, \infty}[\mathcal{R}^{\mathsf{b}}]$.
        \end{proof}
        Now we can take $Y_{i+1}$ extending $X_{i+1}$ such that $T_{Y_{i+1}, A}\neq T^*_{Y_{i+1}, A}$, which completes our inductive construction. Let $\mathcal{Q}_i=\mathcal{Q}_{Y_i}$. We should give shorter names to the maps we have:
        \begin{itemize}
            \item $\pi_{i, i+1} := \pi_{\mathcal{P}_i, \mathcal{P}_{i+1}}\colon\mathcal{P}_i\to\mathcal{P}_{i+1}$.
            \item $\pi_{i, i+1}^{\mathsf{b}}:=\pi_{i, i+1}\uphar\mathcal{P}_i^{\mathsf{b}}=\tau_{X_i, X_{i+1}}\colon\mathcal{P}_i^{\mathsf{b}}\to\mathcal{P}_{i+1}^{\mathsf{b}}$
            \item $\tau_i := \tau_{X_i, Y_{i+1}}\colon\mathcal{P}_i^{\mathsf{b}}\to\mathcal{Q}_i$.
            \item $\sigma_i := \tau_{X_i, Y_{i+1}}\colon\mathcal{Q}_i \to \mathcal{P}_{i+1}^{\mathsf{b}}$.
        \end{itemize}
        See \cref{fig1}.
        
        \begin{figure}[h]
            \centering
            \begin{tikzcd}
                \mathcal{P}_i^{\mathsf{b}} \arrow[rr, "{\pi_{i, i+1}^{\mathsf{b}}}"] \arrow[rd, "{\tau_i}"'] & & (\mathcal{P}_{i+1}^*)^{\mathsf{b}} \arrow[rr, Rightarrow, no head] &  & \mathcal{P}_{i+1}^{\mathsf{b}} \\
                & \mathcal{Q}_i \arrow[rrru, "{\sigma_i}"']  &  &  & 
            \end{tikzcd}
            \caption{Maps between $\mathcal{P}_i^{\mathsf{b}}$'s and $\mathcal{Q}_i$'s.}\label{fig1}
        \end{figure}
        
        The next step is lifting up our commutative diagram. Let $\mathcal{Q}_i^+ = \ult(\mathcal{P}_i, E_i)$, where $E_i$ is the extender of length $\kappa_{Y_i}$ derived from $\tau_{X_i, Y_i}$. Let $\tau_i^+\colon\mathcal{P}_i\to\mathcal{Q}_i^+$ be the ultrapower map. It is easy to see that $(\mathcal{Q}_i^+)^{\mathsf{b}}=\mathcal{Q}_i$ and $\tau_{X_i, Y_i}^+\uphar\mathcal{Q}_i = \tau_{X_i, Y_i}$. Let $\mathcal{P}^*_{i+1}$ be the earliest model in the iteration tree $\mathcal{T}_{\mathcal{P}_i, \mathcal{P}_{i+1}}$ such that $(\mathcal{P}_{i+1}^*)^{\mathsf{b}}=(\mathcal{P}_{i+1})^{\mathsf{b}}$. Note that $\mathcal{P}^*_{i+1}$ is on the main branch. Let $\pi^0_{i, i+1}=\pi_{\mathcal{P}_i, \mathcal{P}_{i+1}^*}$ and let $\pi^1_{i, i+1}=\pi_{\mathcal{P}^*_{i+1}, \mathcal{P}_{i+1}}$. Then $\pi_{i, i+1}=\pi^1_{i, i+1}\circ\pi^0_{i, i+1}$. It is not hard to see that $\mathcal{P}_{i+1}^* = \ult(\mathcal{Q}_i^+, F_i)$, where $F_i$ is the extender of length $\kappa_{X_{i+1}}$ derived from $\sigma_i$. Let $\sigma^*_i\colon\mathcal{Q}_i^+\to\mathcal{P}_{i+1}^*$ be the ultrapower map. We define $\sigma_i^+\colon\mathcal{Q}_i^+\to\mathcal{P}_{i+1}$ by $\sigma_i^+ =\pi^1_{i, i+1}\circ\sigma^*_i$. Finally, let $\Psi_i$ be the $\sigma_i^+$-pullback strategy of $\Sigma_{\mathcal{P}_{i+1}}$. See \cref{fig2}.

        \begin{figure}[h]
            \centering
            \begin{tikzcd}
                \mathcal{P}_i \arrow[rr, "{\pi^0_{i, i+1}}"] \arrow[rd, "{\tau_i^+}"'] \arrow[rrrr, "{\pi_{i, i+1}}", bend left] &  & \mathcal{P}_{i+1}^* \arrow[rr, "{\pi^1_{i, i+1}}"]  &  & \mathcal{P}_{i+1}\\
                & \mathcal{Q}_i^+ \arrow[ru, "{\sigma_i^*}"] \arrow[rrru, "{\sigma_i^+}"'] &  &  & 
            \end{tikzcd}
            \caption{Lifted maps between $\mathcal{P}_i$'s and $\mathcal{Q}_i^+$'s.}\label{fig2}
        \end{figure}
        
        The third step is the simultaneous genericity iteration to make all reals in $\mathcal{P}[g]$ generic using $\Sigma_{\mathcal{P}_i}$'s and $\Psi_i$'s above the bottom parts.
        Let $\langle x_{\alpha}\mid\alpha<\delta\rangle$ be an enumeration of $\mathbb{R}^{\mathcal{P}[g]}$ and let $\langle w_{\alpha}\mid\alpha<\delta\rangle$ be an increasing sequence of windows of $\mathcal{P}$ such that $\inf(w_0)\geq\kappa$.
        Then we will do similar construction as in the proof of \cite[Theorem 6.28]{HOD_as_a_core_model}.
        We sketch the construction:
        \begin{enumerate}
            \item Take a $\Sigma_{\mathcal{P}_0}$-iterate $\mathcal{P}_{0, 1}$ of $\mathcal{P}_0$ making $x_0$ generic using the extender algebra based on $w_0$.
            \item Then let $\mathcal{Q}^*_{0, 1}$ be the last model of $\pi_{0,1}\mathcal{T}_{\mathcal{P}_0, \mathcal{P}_{0, 1}}$, which is according to $\Psi_0$, and let $\tau_{0, 1}^*\colon\mathcal{P}_{0, 1}\to\mathcal{Q}_{0, 1}^*$ be the copy map.
            \item Take a $(\Psi_0)_{\mathcal{Q}_{0, 1}^*, \pi_{0,1}\mathcal{T}_{\mathcal{P}_0, \mathcal{P}_{0, 1}}}$-iterate $\mathcal{Q}_{0, 1}^+$ of $\mathcal{Q}_{0, 1}^*$ making $x_0$ generic using the extender algebra based on $\tau_{0, 1}^*(w_0)$.
            \item Let $\tau^+_{0, 1}\colon\mathcal{P}_{0, 1}\to\mathcal{Q}^+_{0, 1}$ be defined by $\tau^+_{0, 1}=\pi_{\mathcal{Q}^*_{0, 1}, \mathcal{Q}^+_{0, 1}}\circ\tau^*_{0, 1}$.
        \end{enumerate}
        Repeating such construction, we can also define a $\Sigma_{\mathcal{P}_1}$-iterate $\mathcal{P}_{1, 1}$ of $\mathcal{P}_1$ and an elementary map $\sigma^+_{0, 1}\colon\mathcal{Q}^+_{0, 1}\to\mathcal{P}_{1, 1}$ such that $\sigma^+_{0, 1}\circ\pi_{\mathcal{Q}^+_0, \mathcal{Q}^+_{0, 1}}=\pi_{\mathcal{P}_1, \mathcal{P}_{1, 1}}\circ\sigma^+_0$. Furthermore, we can inductively define $\mathcal{P}_{i, \alpha}, \mathcal{Q}^+_{i, \alpha}, \tau^+_{i, \alpha}\colon\mathcal{P}_{i, \alpha}\to\mathcal{Q}^+_{i, \alpha+1}, \sigma^+_{i, \alpha}\colon\mathcal{Q}^+_{i, \alpha}\to\mathcal{P}_{i+1, \alpha}$ for $i<\omega$ and $\alpha<\delta$ such that for any $i<\omega$ and any $\alpha<\delta$,
        \begin{itemize}
            \item $x_{\alpha}$ is generic over $\mathcal{P}_{i, \alpha}$ and $\mathcal{Q}^+_{i, \alpha}$ via the extender algebra based on the image of $w_{\alpha}$.
            \item If $\beta<\alpha$, then
            \begin{align*}
                \tau^+_{i, \alpha}\circ\pi_{\mathcal{P}_{i, \beta}, \mathcal{P}_{i, \alpha}} &= \pi_{\mathcal{Q}^+_{i, \beta}, \mathcal{Q}^+_{i, \alpha}}\circ\tau^+_{i, \beta},\\
                \sigma^+_{i, \alpha}\circ\pi_{\mathcal{Q}^+_{i, \beta}, \mathcal{Q}^+_{i, \alpha}} &= \pi_{\mathcal{P}_{i+1, \beta}, \mathcal{P}_{i+1, \alpha}}\circ\sigma^+_{i, \beta}.
            \end{align*}
        \end{itemize}
        Finally, let $\mathcal{P}_{i, \delta}$ and $\mathcal{Q}^+_{i, \delta}$ for each $i<\omega$ as the direct limit of $\mathcal{P}_{i, \alpha}$'s and $\mathcal{Q}^+_{i, \alpha}$'s respectively. Then they are genericity iterates of $\mathcal{P}_i$ and $\mathcal{Q}^+_i$ respectively and their derived models are all equal to the derived models computed in $\mathcal{P}[g]$ by \cref{pres_of_CDM}. Also, the direct limit $\mathcal{P}_{\infty}$ of all $\mathcal{P}_{i, \delta}$'s and $\mathcal{Q}^+_{i, \delta}$'s is also well-founded because it can be embedded into $\mathcal{M}_{\infty}$. See \cref{fig3}

        \begin{figure}[h]
            \begin{tikzcd}
                \mathcal{P}_i \arrow[dd] \arrow[rr, "\tau^+_i"]                         &  & \mathcal{Q}^+_i \arrow[dd] \arrow[rr, "\sigma^+_i"]                         &  & \mathcal{P}_{i+1} \arrow[dd]           &  & \\
                & & & & & & \\
                {\mathcal{P}_{i, \beta}} \arrow[rr, "{\tau^+_{i, \beta}}"] \arrow[d]    &  & {\mathcal{Q}^+_{i, \beta}} \arrow[rr, "{\sigma^+_{i, \beta}}"] \arrow[d]    &  & {\mathcal{P}_{i+1, \beta}} \arrow[d]   &  & \\
                {\mathcal{P}_{i, \alpha}} \arrow[rr, "{\tau^+_{i, \alpha}}"] \arrow[dd] &  & {\mathcal{Q}^+_{i, \alpha}} \arrow[rr, "{\sigma^+_{i, \alpha}}"] \arrow[dd] &  & {\mathcal{P}_{i+1, \alpha}} \arrow[dd] &  & \\
                & & & & & &\\
                {\mathcal{P}_{i, \delta}} \arrow[rr, "{\tau^+_{i, \delta}}"]            &  & {\mathcal{Q}^+_{i, \delta}} \arrow[rr, "{\sigma^+_{i, \delta}}"]            &  & {\mathcal{P}_{i+1, \delta}} \arrow[rr] &  & \mathcal{P}_{\infty}
            \end{tikzcd}
            \caption{Simultaneous genericity iteration.}\label{fig3}
        \end{figure}

        Now we are ready to obtain a contradiction.
        First, $T_{X_i, A}=T^*_{X_i, A}$ for all $i<\omega$ simply because $\tau_i = \pi_{\mathcal{P}_i, \infty}\uphar\mathcal{P}_i^{\mathsf{b}}$.
        There is a formula $\theta$ that defines $T^*_{X_i, A}$ from ordinal parameters $t$ in the derived model of $\mathcal{P}_{i, \delta}$'s at $\delta$.
        By the elementarity of $\tau^+_{i, \delta}$, $\langle\phi, s\rangle\in T_{\mathcal{Q}_i, A}$ if and only if the derived model of $\mathcal{Q}_{i, \delta}^+$ at $\delta$ satisfies $\theta(\langle\phi, s\rangle, \tau^+_{i, \delta}(t))$.
        Since $\mathcal{P}_{\infty}$ is well-founded, there is an $n<\omega$ such that for any $i\geq n$, $\tau^+_{i, \delta}(t)=t$.
        Because the derived models of $\mathcal{P}_{i, \delta}$'s and $\mathcal{Q}^+_{i, \delta}$'s at $\delta$ are all the same as the derived models of $\mathcal{P}$ at $\delta$, the derived model of $\mathcal{Q}_{i, \delta}^+$ at $\delta$ satisfies $\theta(\langle\phi, s\rangle, t)$ if and only if $\mathcal{M}_{\infty}^{\mathsf{b}}\models\phi[\pi^{\Psi_i}_{\mathcal{Q}_i, \infty}(s), A]$.
        These arguments imply that for any $i\geq n$, $T_{Y_i, A}=T^*_{Y_i, A}$, which contradicts the choice of $Y_i$'s.
    \end{proof}

    A small modification of the last proof gives us strong condensation.
    
    \begin{thm}\label{strongly_condensing}
        $\pi_{\mathcal{P}, \infty}[\mathcal{P}^{\mathsf{b}}]$ is strongly condensing.
        Moreover, for any genericity iterate $\mathcal{Q}$ of $\mathcal{P}$, $\pi_{\mathcal{Q}, \infty}[\mathcal{Q}^{\mathsf{b}}]$ is strongly condensing.
    \end{thm}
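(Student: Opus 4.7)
The plan is a modification of the proof of \cref{condensing}, replacing the property ``does not have $A$-condensation'' (for a fixed $A$) by ``is not strongly condensing'' as the invariant carried through the induction. Suppose toward a contradiction that $X := \pi_{\mathcal{P}, \infty}[\mathcal{P}^{\mathsf{b}}]$ is not strongly condensing; by \cref{char_of_condensing}, fix $Y$ extending $X$, some $A \in Y$, and $Z$ extending $Y$ such that $T_{Z, A} \neq T^*_{Z, A}$. I set $\mathcal{P}_0 := \mathcal{P}$, $X_0 := X$, and $Y_0 := Z$; transitivity of ``extends'' gives that $Y_0$ extends $X_0$ and $T_{Y_0, A} \neq T^*_{Y_0, A}$.

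The inductive construction follows that of \cref{condensing}. Given $(\mathcal{P}_i, X_i, Y_i)$ together with $A_i \in Y_i$ and $Z_i$ extending $Y_i$ with $T_{Z_i, A_i} \neq T^*_{Z_i, A_i}$ witnessing non-strong-condensation of $X_i$, I take $\mathcal{P}_{i+1} \in \mathcal{F}^*_g(\mathcal{P}_i)$ a genericity iterate with $Z_i \subseteq \mathrm{ran}(\pi_{\mathcal{P}_{i+1}, \infty})$ and set $X_{i+1} := \pi_{\mathcal{P}_{i+1}, \infty}[(\mathcal{P}_{i+1})^{\mathsf{b}}]$. The analog of the claim in \cref{condensing}, that $X_{i+1}$ is not strongly condensing, follows by the same elementarity argument: the statement ``$\pi_{\mathcal{P}_i, \infty}[(\mathcal{P}_i)^{\mathsf{b}}]$ is not strongly condensing'' is definable over $\mathcal{V}_{\mathcal{P}_i}$ (strong condensation being a property of the induced direct limit system), and applying $\pi_{\mathcal{V}_{\mathcal{P}_i}, \mathcal{V}_{\mathcal{R}}}$ for $\mathcal{R}$ a genericity iterate of $\mathcal{P}_i$ above $\kappa_{X_{i+1}}$ yields that $\pi_{\mathcal{R}, \infty}[\mathcal{R}^{\mathsf{b}}]$ is not strongly condensing, which equals $X_{i+1}$ since $\pi_{\mathcal{P}_{i+1}, \mathcal{R}}$ does not change the bottom part. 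Fresh witnesses $Y_{i+1}, A_{i+1}, Z_{i+1}$ can then be picked and the induction continued.

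With $\langle \mathcal{P}_i, X_i, Y_i \mid i < \omega \rangle$ so produced, I lift to $\mathcal{Q}_i^+, \tau_i^+, \sigma_i^+, \Psi_i$ and carry out the simultaneous genericity iteration producing $\mathcal{P}_{i, \alpha}, \mathcal{Q}_{i, \alpha}^+$ for $\alpha \leq \delta$, exactly as in \cref{condensing}. The well-foundedness of the direct limit $\mathcal{P}_{\infty}$ yields $n < \omega$ such that the parameter tuples $t_i$ defining the types $T^*_{X_i, A_i}$ in the derived model are fixed by $\tau^+_{i, \delta}$ for $i \geq n$: each $A_i$ has a canonical name $a_i \in (\mathcal{P}_{i+1})^{\mathsf{b}}$ (since $A_i \in Y_i \subseteq X_{i+1}$), and these names are preserved under the lift maps by \cref{stabilizing_parameters}. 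This forces $T_{Y_i, A_i} = T^*_{Y_i, A_i}$ for large $i$, contradicting the construction. The moreover part follows by substituting $\mathcal{Q}$ for $\mathcal{P}$ throughout.

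The main obstacle is managing the varying witnesses $A_i$ in the final well-foundedness argument: whereas in \cref{condensing} a single $A$ is used with its stable name in $\mathcal{P}^{\mathsf{b}}$, here each $A_i$ is captured only in the subsequent iterate $(\mathcal{P}_{i+1})^{\mathsf{b}}$, so the parameter tuples $t_i$ vary with $i$. Verifying their uniform stabilization under $\tau^+_{i, \delta}$ is a careful but routine application of \cref{stabilizing_parameters} together with the positionality of $\Sigma$.
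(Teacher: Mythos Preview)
Your approach diverges from the paper's at a crucial point: you carry the invariant ``$X_i$ is not strongly condensing'' and select \emph{fresh} witnesses $A_i$ at every stage, whereas the paper fixes a \emph{single} $A\in Y^*$ at the outset and carries instead the invariant ``$X_i$ has an extension that does not have $A$-condensation'' for this fixed $A$. This is not a cosmetic difference; it is precisely what makes the final contradiction go through.

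The well-foundedness step at the end of \cref{condensing} works as follows: for a \emph{fixed} ordinal parameter $t$ (encoding $A$ in the common derived model), the sequence $\pi_{\mathcal{P}_{i,\delta},\mathcal{P}_\infty}(t)$ is weakly decreasing in $i$, hence stabilizes, which forces $\tau^+_{i,\delta}(t)=t$ for large $i$. With your varying $A_i$'s the parameters $t_i$ are unrelated from stage to stage, so the sequence $\pi_{\mathcal{P}_{i,\delta},\mathcal{P}_\infty}(t_i)$ has no reason to be monotone, and well-foundedness of $\mathcal{P}_\infty$ gives you nothing. Your appeal to \cref{stabilizing_parameters} does not help here: that claim concerns genericity-iteration maps $\pi_{\mathcal{V}_{\mathcal{Q}},\mathcal{V}_{\mathcal{R}}}$ acting on elements of $\mathrm{ran}(\pi_{\mathcal{V}_{\mathcal{Q}},\infty})$, not the ultrapower-type maps $\tau^+_{i,\delta}$ appearing in the final diagram, and your $a_i\in\mathcal{P}_{i+1}^{\mathsf{b}}$ lives one level too high to be moved by $\tau^+_{i,\delta}$ anyway. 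Calling this step ``careful but routine'' papers over a real gap.

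The paper's fix is exactly to avoid this trap: once $A$ is fixed, the proof of \cref{condensing} (including the stabilization argument) runs verbatim, the only modification being that the inductive invariant is now ``$X_i$ admits an extension without $A$-condensation'' rather than ``$X_i$ itself lacks $A$-condensation.''
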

    \begin{proof}
        Suppose that $\pi_{\mathcal{P}, \infty}[\mathcal{P}^{\mathsf{b}}]$ is not strongly condensing.
        Then there is a $Y^*$ extending $\pi_{\mathcal{P}, \infty}[\mathcal{P}^{\mathsf{b}}]$ that is not condensing.
        Take $A\in Y^*$ such that $Y^*$ does not have $A$-condensation.
        Let $\mathcal{P}_0=\mathcal{P}, X_0 = \pi_{\mathcal{P}, \infty}[\mathcal{P}^{\mathsf{b}}], Y_0^* = Y^*$.
        Also, let $Y_0$ extending $Y_0^*$ such that $T_{Y_0, A}\neq T^*_{Y_0, A}$.
        Now we can inductively construct $\mathcal{P}_i, X_i, Y_i$ for $i>0$ with the same property as before.
        The key claim is that for each $i<\omega$, $X_i$ has an extension $Y_i$ that does not have $A$-condensation, which can be shown by the same proof.
        Therefore, the proof of \cref{condensing} leads us to a contradiction.
        The moreover part of the lemma also follows from the same argument.
    \end{proof}
 
 
	\section{Forcing argument}

    We devote this section to the proof of \cref{MainTheorem}. As we mentioned in the paragraph right after \cref{main_thm_on_CDM}, it is consistent relative to a Woodin limit of Woodin cardinals that there is a hod pair $(\mathcal{V}, \Omega)$ together with a regular limit of Woodin cardinals $\delta$ satisfying the assumption of \cref{main_thm_on_CDM}. Let $g\subset\mathrm{Col}(\omega, {<}\delta)$ be $\mathcal{V}$-generic. In this section, we write
    \[
    W=(\mathsf{CDM}^-)^{\mathcal{V}[g]}.
    \]
    It is enough to show the following.
	
	\begin{thm}\label{main_thm_on_Pmax}
         Let $G*H\subset (\mathbb{P}_{\mathrm{max}}*\mathrm{Add}(\Theta, 1))^W$ be $W$-generic.\footnote{Here, $\mathrm{Add}(\gamma, 1)$ is a forcing poset to add a Cohen subset of $\gamma$.} Then in $W[G*H]$, $\mathsf{ZFC}$ holds and for any $\kappa\in\{\omega_1, \omega_2, \omega_3\}$, the restriction of the club filter on $\kappa\cap\mathrm{Cof}(\omega)$ to $\HOD$ is an ultrafilter in $\HOD$.
	\end{thm}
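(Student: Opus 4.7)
The plan is to analyze the cardinal structure of $W[G*H]$, identify $\HOD^{W[G*H]}$, and transfer the ultrafilter statements from $W$ using \cref{main_thm_on_CDM} together with standard determinacy facts about $\omega_1$. By \cref{determinacy_in_CDM_1} and \cref{determinacy_in_CDM_2}, $W \models \mathsf{AD}^+ + \mathsf{AD}_\mathbb{R} + \mathsf{DC} + $ ``$\Theta$ is regular,'' so Woodin's standard $\mathbb{P}_{\max}$ analysis over such a determinacy model gives $W[G] \models \mathsf{ZFC}$, $\omega_1^{W[G]} = \omega_1^W$, and $\omega_2^{W[G]} = \Theta^W$. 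Then \cref{HOD_in_CDM} identifies $(\Theta^+)^{\HOD^W} = (\kappa_\infty^+)^{\mathcal{M}_\infty}$, which remains a cardinal in $W[G]$ equal to $\omega_3^{W[G]}$. The forcing $\mathrm{Add}(\Theta,1)$ in $W[G]$ is $({<}\omega_2)$-closed and $\omega_3$-cc, hence preserves $\omega_1,\omega_2,\omega_3$ and $\mathsf{ZFC}$.

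Both $\mathbb{P}_{\max}$ and $\mathrm{Add}(\Theta,1)$ are weakly homogeneous and ordinal-definable in $W$, yielding $\HOD^W \subset \HOD^{W[G*H]}$ and a correspondence between $\HOD^{W[G*H]}$-subsets of ordinals and ordinal-definable names in $W$ (possibly involving a partial piece of $H$ for subsets of ordinals above $\Theta$). With this in hand, the $\omega_1$ and $\omega_2$ cases are handled by transfer. For $\omega_1^{W[G*H]} = \omega_1^W$, Solovay's theorem under $\mathsf{AD}$ makes the club filter on $\omega_1$ a definable normal ultrafilter in $W$ that restricts to an ultrafilter on $\HOD^W \cap \power(\omega_1)$; the $\sigma$-closure of $\mathbb{P}_{\max}$ together with the homogeneity argument transfers this to $\HOD^{W[G*H]}$. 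For $\omega_2^{W[G*H]} = \Theta^W$, one applies \cref{main_thm_on_CDM}(3) in $W$ and transfers using the $({<}\Theta)$-closure of $\mathrm{Add}(\Theta,1)$ and the $\Theta^W$-cc of $\mathbb{P}_{\max}$, so that $\HOD$-subsets of $\Theta$ in the extension correspond to those of $W$ and clubs on $\Theta$ in $W[G*H]$ contain clubs from $W$.

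The remaining case, $\omega_3^{W[G*H]} = (\kappa_\infty^+)^{\mathcal{M}_\infty}$, is the main novelty and answers the Ben-Neria--Hayut question. Here the Cohen subset $H \subset \Theta$ is essential, since the analysis in $W$ alone provides no $\HOD$-ultrafilter at $(\Theta^+)^{\HOD^W}$. The plan is to use $H$ to define in $\HOD^{W[G*H]}$ a canonical indexing of $\HOD$-subsets of $\Theta$ by ordinals below $\omega_3$, and then pull back the $\HOD$-ultrafilter on $\omega_2 = \Theta$ from the preceding case along this indexing, producing an ultrafilter on $\omega_3 \cap \mathrm{Cof}(\omega)$ in $\HOD^{W[G*H]}$ that is shown to agree with the restriction of the club filter. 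The main obstacle is verifying that this pulled-back ultrafilter is genuinely in $\HOD^{W[G*H]}$ (i.e., ordinal-definable) and coincides with the club filter on the relevant $\HOD$-subsets; this will require careful use of the homogeneity of $\mathrm{Add}(\Theta,1)$ and of the fine structure of $\mathcal{M}_\infty$ just above $\kappa_\infty$, together with the measure on $\Theta$ transferred in the $\omega_2$ case.
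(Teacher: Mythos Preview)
Your cardinal arithmetic is wrong, and this derails the entire plan. Over a model of $\mathsf{AD}^{+}+\mathsf{AD}_{\mathbb{R}}+\Theta$ regular, $\mathbb{P}_{\max}$ preserves both $\omega_1$ and $\omega_2$ and forces $\lvert\mathbb{R}\rvert=\omega_2$ and $\Theta=\omega_3$; the paper states this explicitly in the proof of \cref{AC_in_Pmax_extension}. So $\omega_2^{W[G*H]}=\omega_2^{W}$ (not $\Theta^W$) and $\omega_3^{W[G*H]}=\Theta^W$ (not $(\Theta^+)^{\HOD^W}$). Once this is corrected, the $\omega_3$ case is not a ``main novelty'' requiring any pull-back construction via $H$: it is handled directly by \cref{main_thm_on_CDM}(3), which already shows that in $W$ the club filter on $\Theta\cap\mathrm{Cof}(\omega)$ restricted to $\HOD$ is an ultrafilter in $\HOD$. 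The $\omega_2$ case is handled by the $\mathsf{AD}$ fact that the club filter on $\omega_2\cap\mathrm{Cof}(\omega)$ is an ultrafilter in $W$.

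There are several further gaps. First, $W[G]$ need not satisfy $\mathsf{ZFC}$; the $\mathrm{Add}(\Theta,1)$ step is precisely what adds a well-ordering of $\power(\omega_2)$, and the paper's \cref{AC_in_Pmax_extension} carries this out using \cref{another_representation_of_W} and $\omega_2\text{-}\mathsf{DC}$ in $W[G]$. Second, weak homogeneity gives $\HOD^{W[G*H]}\subset\HOD^{W}$, not the reverse as you wrote; the inclusion $\HOD^{W}\subset\HOD^{W[G*H]}$ is the nontrivial direction, and the paper obtains it by showing $W$ is ordinal definable in $W[G*H]$, which in turn uses Woodin's $\mathsf{AD}^{+}$ conjecture (a consequence of $\mathsf{MM}(\mathfrak{c})$) to recover $\power(\mathbb{R})^W$ definably (\cref{HOD_does_not_change}). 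Finally, the transfer of the club-filter statements from $W$ to $W[G*H]$ rests on two facts you did not isolate: $\HOD^{W}=\HOD^{W[G*H]}$ exactly, and the countable closure of $\mathbb{P}_{\max}*\mathrm{Add}(\Theta,1)$ together with $\mathsf{DC}$ in $W$ ensures that $\omega$-clubs in $W$ remain $\omega$-clubs in $W[G*H]$.
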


	Note that $W$ does not satisfy $\mathsf{AC}$, but has the desired property for $\omega_1, \omega_2$ and $\Theta$ by \cref{main_thm_on_CDM}. We force over $W$ with $(\mathbb{P}_{\mathrm{max}}*\mathrm{Add}(\Theta,1))^W$ to collapse $\Theta$ to be $\omega_3$ and obtain a $\mathsf{ZFC}$ model. Then we argue that $\HOD^W=\HOD^{W[G*H]}$ to ensure that $W[G*H]$ is a desired model.
    We freely use the standard facts of the $\mathbb{P}_{\mathrm{max}}$ forcing written in \cite{Larson2010forcing}.
    \begin{lem}\label{another_representation_of_W}
        For any cofinal $X\in\power_{\omega_1}(\mathcal{M}_{\infty}^{\mathsf{b}})$, $W=L(\mathcal{M}_{\infty}^{\mathsf{b}}, X, \Gamma^*_g, \mathbb{R}^*_g)$.
    \end{lem}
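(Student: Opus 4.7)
The inclusion $L(\mathcal{M}_{\infty}^{\mathsf{b}}, X, \Gamma^*_g, \mathbb{R}^*_g) \subseteq W$ is immediate, since $X \in \power_{\omega_1}(\mathcal{M}_{\infty}^{\mathsf{b}}) \subseteq W$ and all other generators are already generators of $W$. For the converse, since $\mathcal{M}_{\infty}^{\mathsf{b}}, \Gamma^*_g, \mathbb{R}^*_g$ appear on both sides, it suffices to prove that every $Y \in \power_{\omega_1}(\mathcal{M}_{\infty}^{\mathsf{b}})$ lies in $L(\mathcal{M}_{\infty}^{\mathsf{b}}, X, \Gamma^*_g, \mathbb{R}^*_g)$.

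Fix such a $Y$. The cofinality assumption on $X$, combined with \cref{strongly_condensing}, will yield that $X$ itself is strongly condensing --- if needed, one enlarges $X$ by some $\pi_{\mathcal{Q}, \infty}[\mathcal{Q}^{\mathsf{b}}]$ for a genericity iterate $\mathcal{Q}$ of $\mathcal{P}$ without changing the inner model that $X$ generates, and then strong condensation propagates to $X$ from the definition of extension. Choose a countable $Z \in \power_{\omega_1}(\mathcal{M}_{\infty}^{\mathsf{b}})$ extending $X$ with $Y \subseteq \Hull^{\mathcal{M}_{\infty}^{\mathsf{b}}}(Z)$; such a $Z$ is obtained by taking the Skolem closure of $X \cup Y$ inside $\mathcal{M}_{\infty}^{\mathsf{b}}$ and, if necessary, adjoining enough ordinals below $\kappa_Z$ to meet the extension condition. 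Since $X$ is strongly condensing, $Z$ is condensing; applying the same principle to $Z$ itself as an extension of $Z$, the uniqueness clause in the definition of condensing forces $k^X_Z = \tau_Z$, because $\tau_Z$ satisfies properties (1) and (2) of that definition. Set $Y^* := \tau_Z^{-1}[Y] \subseteq \mathcal{Q}_Z$ and fix a real $r \in \mathbb{R}^*_g$ coding the countable triple $(\mathcal{Q}_Z, \tau_{X, Z}, Y^*)$.

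The reconstruction inside $L(\mathcal{M}_{\infty}^{\mathsf{b}}, X, \Gamma^*_g, \mathbb{R}^*_g)$ proceeds as follows. Using $X$ and $\mathcal{M}_{\infty}^{\mathsf{b}}$ one definably forms $\mathcal{Q}_X = \cHull^{\mathcal{M}_{\infty}^{\mathsf{b}}}(X)$ with uncollapse $\tau_X$; using $r$ one extracts $\mathcal{Q}_Z$, $\tau_{X, Z}$, and $Y^*$. The pullback strategy $\Psi_Z$ of $\mathcal{Q}_Z$ under $\tau_Z$ lies in $\Gamma^*_g$, and under $\mathsf{AD}^+$ in $W$ is the unique $\omega_1$-iteration strategy for $\mathcal{Q}_Z$; hence the direct limit map $\pi^{\Psi_Z}_{\mathcal{Q}_Z, \infty}$ is definable in our inner model from $r$ alone. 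Applying the formula from the proof of \cref{char_of_condensing},
\[
k^X_Z\bigl(\tau_{X, Z}(f)(s)\bigr) = \tau_X(f)\bigl(\pi^{\Psi_Z}_{\mathcal{Q}_Z, \infty}(s)\bigr), \qquad f \in \mathcal{Q}_X, \; s \in [\kappa_Z]^{<\omega},
\]
reconstructs $k^X_Z \colon \mathcal{Q}_Z \to \mathcal{M}_{\infty}^{\mathsf{b}}$ inside the inner model. Since $k^X_Z = \tau_Z$, one recovers $Y = \tau_Z[Y^*] = k^X_Z[Y^*]$ inside $L(\mathcal{M}_{\infty}^{\mathsf{b}}, X, \Gamma^*_g, \mathbb{R}^*_g)$, which completes the proof.

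The principal obstacle is verifying that $\pi^{\Psi_Z}_{\mathcal{Q}_Z, \infty}$ is genuinely definable inside $L(\mathcal{M}_{\infty}^{\mathsf{b}}, X, \Gamma^*_g, \mathbb{R}^*_g)$ rather than merely inside $W$; this rests on $\mathsf{AD}^+$-uniqueness of iteration strategies for countable hod premice together with the fact that the relevant strategy codes already belong to $\Gamma^*_g$ as pullbacks of $\Sigma_{\mathcal{M}_{\infty}^{\mathsf{b}}}$ along countable embeddings. A secondary, largely bookkeeping matter is pinning down ``cofinal'' precisely enough to deliver strong condensation of $X$ via \cref{strongly_condensing}, and ensuring that the chosen $Z$ genuinely extends $X$ in the technical sense of the definition; both should be routine but need to be carried out carefully.
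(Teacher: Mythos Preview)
Your approach has a genuine circularity. You propose to enlarge an arbitrary cofinal $X$ by $\pi_{\mathcal{Q},\infty}[\mathcal{Q}^{\mathsf{b}}]$ ``without changing the inner model that $X$ generates'', and then run the condensing-set machinery. But to know that this enlargement does not change the inner model you must show $\pi_{\mathcal{Q},\infty}[\mathcal{Q}^{\mathsf{b}}]\in L(\mathcal{M}_{\infty}^{\mathsf{b}}, X, \Gamma^*_g, \mathbb{R}^*_g)$ --- and that is precisely an instance of the statement you are trying to prove (that every countable subset of $\mathcal{M}_{\infty}^{\mathsf{b}}$ lies in this model). Nothing in \cref{strongly_condensing} gives strong condensation for an \emph{arbitrary} cofinal $X$; the theorem applies only to the specific sets $\pi_{\mathcal{Q},\infty}[\mathcal{Q}^{\mathsf{b}}]$, and cofinality is orthogonal to the extension relation used in the definition of condensing. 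So the reduction you label ``largely bookkeeping'' is in fact the entire content of the lemma.

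The paper's argument is far more elementary and avoids condensing sets altogether. It exploits cofinality directly via fine structure: enumerate $X=\{x_i\mid i<\omega\}$, and for each $i$ let $\mathcal{M}_i\trianglelefteq\mathcal{M}_{\infty}^{\mathsf{b}}$ be the least level with $x_i\in\mathcal{M}_i$ and $\rho(\mathcal{M}_i)=\Theta$. Given any countable $Y\subset\mathcal{M}_{\infty}^{\mathsf{b}}$, each $y_j\in Y$ lies in some $\mathcal{M}_{n_j}$ by cofinality; soundness of $\mathcal{M}_{n_j}$ then makes $y_j$ definable from a formula, a finite $s_j\in[\Theta]^{<\omega}$, and the standard parameter. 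The countable set $\{\langle n_j,\phi_j,s_j\rangle\mid j<\omega\}$ is a countable subset of $\Theta$, hence bounded below $\Theta$ by regularity, hence coded by a real via a surjection $\mathbb{R}^*_g\to\lambda$ in $L(\Gamma^*_g,\mathbb{R}^*_g)$. So $Y$ is definable from $\mathcal{M}_{\infty}^{\mathsf{b}}$, $X$, and a real. No iteration strategies, no direct-limit maps, no condensing sets are needed.
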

    \begin{proof}
        We write $\Theta=\Theta^{W}$.
        Let $X\in\power_{\omega_1}(\mathcal{M}_{\infty}^{\mathsf{b}})$ be cofinal. We may assume that $X$ is a set of ordinals above $\Theta$ and let $\langle x_i\mid i<\omega\rangle$ be its enumeration of order type $\omega$.
        For each $i<\omega$, let $\mathcal{M}_i$ be the least initial segment of $\mathcal{M}_{\infty}^{\mathsf{b}}$ such that $x_i\in\ord\cap\mathcal{M}_i$ and $\rho(\mathcal{M}_i)=\Theta$.
        
        It is enough to show that for any $Y\in\power_{\omega_1}(\mathcal{M}^{\mathsf{b}}_{\infty})$, $Y\in L(\mathcal{M}_{\infty}^{\mathsf{b}}, X, \Gamma^*_g, \mathbb{R}^*_g)$. Fix such a $Y$ and its enumeration $\langle y_j\mid j<\omega\rangle$ of order type $\omega$. For each $j<\omega$, let $n_j<\omega$ be the least $n<\omega$ such that $y_j\in\mathcal{M}_n$. Since $\mathcal{M}_{n_j}$ is sound, there are a formula $\phi$ and $s\in [\Theta]^{<\omega}$ such that $y_{n_j}$ is the unique $y$ such that
        \[
        \mathcal{M}_{n_j}\models\phi[y, s, p(\mathcal{M}_{n_j})],
        \]
        where $p(\mathcal{M}_{n_j})$ is the standard parameter of $\mathcal{M}_{n_j}$. Then let $\phi_j$ and $s_j$ be the least $\phi$ and $s$ such that $\mathcal{M}_{n_j}\models\phi[y_{n_j}, s, p(\mathcal{M}_{n_j})]$. Now the set $\{\langle n_j, \phi_j, s_j\rangle\mid j<\omega\}$ can be coded into a countable subset $A$ of $\Theta$. Since $\Theta$ is regular in $W$, $A\subset\lambda$ for some $\lambda<\Theta$.
        Since $\Theta=\Theta^{L(\Gamma^*, \mathbb{R}^*_g)}$, there is a surjection $f\colon\mathbb{R}^*_g\to\lambda$ in $L(\Gamma^*_g, \mathbb{R}^*_g)$.
        Using such an $f$, $A$ can be coded into $\mathbb{R}^*_g$ and thus $A\in L(\Gamma^*_g, \mathbb{R}^*_g)$.
        Then $Y$ is definable over $\mathcal{M}_{\infty}^{\mathsf{b}}$ from the parameter $A$ in $L(\Gamma^*_g, \mathbb{R}^*_g)$, so $Y\in L(\mathcal{M}_{\infty}^{\mathsf{b}}, X, \Gamma^*_g, \mathbb{R}^*_g)$.
    \end{proof}
 
    We get the following lemma as in \cite{AD_ForcingAxiom_NSIdeal} and \cite{larson2021failures}.
	
	\begin{lem}\label{AC_in_Pmax_extension}
		$W[G\ast H]\models \mathsf{ZFC}+\mathsf{MM}^{++}(\mathfrak{c})$.\footnote{Here, $\mathsf{MM}^{++}(\mathfrak{c})$ denotes $\text{Martin's Maximum}^{++}$ for posets of size at most continuum.}
	\end{lem}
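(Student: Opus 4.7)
The plan is to verify that the hypotheses needed to run the standard $\mathbb{P}_{\mathrm{max}}$ analysis from \cite{AD_ForcingAxiom_NSIdeal}, together with its Chang-type adaptation in \cite{larson2021failures}, hold in $W$, and then to invoke those arguments. The crucial features of $W$ are provided by \cref{main_thm_on_CDM}, namely $W \models \mathsf{AD}^+ + \mathsf{AD}_{\mathbb{R}} + \mathsf{DC} + \Theta$ is regular, and by \cref{sets_of_reals_in_CDM}, namely $W \cap \power(\mathbb{R}^*_g) = \Gamma^*_g$. The latter means that every set of reals in $W$ has a canonical $<\delta$-universally Baire representation in $\mathcal{V}[g]$, which is exactly the pointclass/iterability input needed for the $\mathbb{P}_{\mathrm{max}}$ arguments (iterability of conditions, density lemmas, homogeneity) to go through uniformly.

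Granting this, the standard analysis yields $W[G] \models \mathsf{ZFC} + \mathsf{MM}^{++}(\mathfrak{c})$. The $\sigma$-closure-style combinatorics of $\mathbb{P}_{\mathrm{max}}$ preserves $\omega_1^W$, and the generic $G$ produces, via iterates of its conditions, a canonical wellorder of $\power(\omega_1)^{W[G]}$. Combined with $\mathsf{DC}$ in $W$ and \cref{another_representation_of_W}, which represents $W$ as $L(\mathcal{M}_{\infty}^{\mathsf{b}}, X, \Gamma^*_g, \mathbb{R}^*_g)$ for a single countable $X$, this upgrades to a wellorder of all of $W[G]$, giving full $\mathsf{AC}$. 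The statement $\mathsf{MM}^{++}(\mathfrak{c})$ is then obtained by Woodin's usual density argument: given a poset of size $\leq \mathfrak{c}$ together with an $\omega_1$-sequence of dense sets and of names for stationary sets, one finds a $\mathbb{P}_{\mathrm{max}}$-condition whose canonical iteration realizes the desired generic filter and preserves the stationary sets. In $W[G]$ the cardinal structure is $\omega_1 = \omega_1^W$, $\omega_2 = \omega_2^W$, $\omega_3 = \Theta^W$, and $\mathfrak{c} = \omega_2$; $\omega_2^W$ is preserved, rather than $\Theta^W$ collapsing directly onto $\omega_2$ as in the $L(\mathbb{R})$ case, because $W$ contains the extra Chang data $\power_{\omega_1}(\mathcal{M}_{\infty}\vert(\Theta^+)^{\mathcal{M}_{\infty}})$. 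Finally, $\mathrm{Add}(\Theta, 1)^{W[G]}$ is $<\Theta$-closed (with $\Theta = \omega_3$ in $W[G]$), so it preserves cardinals $\leq \omega_3$, preserves $\mathsf{AC}$, and preserves $\mathsf{MM}^{++}(\mathfrak{c})$: the latter concerns only posets of size $\leq \mathfrak{c} = \omega_2 < \Theta$, and a $<\Theta$-closed forcing leaves such posets and their dense sets intact.

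The main obstacle is the transfer of the standard $\mathbb{P}_{\mathrm{max}}$ analysis from the $L(\mathbb{R})$ setting to the Chang-type model $W$. This is exactly the work done in \cite{larson2021failures}, and what makes it go through here is that the analysis only depends on the pointclass of sets of reals (controlled by \cref{sets_of_reals_in_CDM}), on $\mathsf{DC}$ (given by \cref{determinacy_in_CDM_2}), and on a manageable description of $W$ sitting above that pointclass (provided by \cref{another_representation_of_W}). The extra Chang data plays no role at the level of reals and therefore does not interfere with the $\mathbb{P}_{\mathrm{max}}$ arguments; its only effect, crucial for the statement of \cref{MainTheorem} itself, is to cause $\omega_2^W$ to be preserved in the extension so that $\Theta^W$ ends up as $\omega_3$.
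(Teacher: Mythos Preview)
There is a genuine gap: your claim that already $W[G]\models\mathsf{ZFC}$ does not go through, and the paper's proof is organized precisely around this point. You argue that the $\mathbb{P}_{\mathrm{max}}$-generic $G$ gives a wellorder of $\power(\omega_1)^{W[G]}$, and that together with the representation $W=L(\mathcal{M}_{\infty}^{\mathsf{b}}, X, \Gamma^*_g, \mathbb{R}^*_g)$ from \cref{another_representation_of_W} this yields a wellorder of $W[G]$. But that representation has $\Gamma^*_g=\power(\mathbb{R})^W$ as a parameter, and in $W[G]$ one has $\lvert\mathbb{R}\rvert=\omega_2$, so $\Gamma^*_g$ sits inside $\power(\omega_2)^{W[G]}$, not $\power(\omega_1)^{W[G]}$. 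A wellorder of $\power(\omega_1)$ does not produce a wellorder of $\power(\omega_2)$; under $\mathsf{AD}_{\mathbb{R}}$ there is no reason for sets of reals to be ordinal definable from a real, so the $L(\mathbb{R})$-style shortcut is unavailable. Consequently $W[G]$ need not satisfy $\mathsf{AC}$, and your final paragraph, which treats $\mathrm{Add}(\Theta,1)$ as a routine ${<}\Theta$-closed forcing over a $\mathsf{ZFC}$ model, is not justified.

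The paper's route is exactly to use $H$ to manufacture the missing wellorder: one first observes that $\mathrm{Add}(\omega_3,1)^{W[G]}$ adds a wellorder of $\power(\omega_2)^{W[G]}$ by a density argument, and then one must check $\power(\omega_2)^{W[G]}=\power(\omega_2)^{W[G*H]}$. Here the second subtlety you miss appears: in the absence of $\mathsf{AC}$ in $W[G]$, ${<}\omega_3$-closure of $\mathrm{Add}(\omega_3,1)$ does not automatically give ${<}\omega_3$-distributivity. The paper invokes \cite[Theorem 9.36]{AD_ForcingAxiom_NSIdeal} (via \cref{another_representation_of_W}) to get $\omega_2\text{-}\mathsf{DC}$ in $W[G]$, and then runs a direct $\omega_2\text{-}\mathsf{DC}$ argument to show that no new $\omega_2$-sequences are added. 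Also, a minor point: the fact that $\Theta^W$ becomes $\omega_3$ rather than $\omega_2$ is a general consequence of $\mathsf{AD}^+ + \Theta$ regular, not of the extra Chang data in $W$.
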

	\begin{proof}
        Using \cref{another_representation_of_W}, this lemma follows from the proof of \cite[Theorem 9.39]{AD_ForcingAxiom_NSIdeal}. We will give a detailed proof of how to get the $\mathsf{AC}$ in $W[G*H]$ here to make clear why \cref{another_representation_of_W} is helpful.
        
        Note that if $\mathsf{AD}^+$ holds and $\Theta$ is regular, then $\mathbb{P}_{\mathrm{max}}$ forces $\lvert\mathbb{R}\rvert=\omega_2$ and $\Theta=\omega_3$.
		Because $W=L(\mathcal{M}_{\infty}^{\mathsf{b}}, X, \Gamma^*_g, \mathbb{R}^*_g)$ where $\mathcal{M}_{\infty}^{\mathsf{b}}$ and $X$ are well-ordered by \cref{another_representation_of_W}, $\mathbb{R}^W=\mathbb{R}^*_g$, and $\power(\mathbb{R})^{W}=\Gamma^*_g$, we only need to show that
        \[
        W[G*H]\models\power(\omega_2)\text{ is well-ordered.}
        \]
        In the rest of the proof, we write $\omega_2=\omega_2^{W[G]}$ and $\omega_3=\omega_3^{W[G]}$.
  
        It is easy to see that $\mathrm{Add}(\omega_3,1)^{W[G]}$ adds a well-order of $\power(\omega_2)^{W[G]}$ of length $\omega_3$, because any binary sequence of length $\omega_2$ will eventually appear in the added generic function $\bigcup H\colon\omega_3\to 2$ by density argument: For any $f\colon\omega_2\to 2$ in $W[G]$, the set 
        \[
        \{p\in\mathrm{Add}(\omega_3,1)^{W[G]}\mid\exists\alpha<\omega_3\forall\xi<\omega_2(\alpha+\xi\in\mathrm{dom}(p) \land p(\alpha+\xi)=f(\xi))\}
        \]
        is dense, so in $W[G\ast H]$, we can order ${}^{\omega_2}2$ by sending each $f\in {}^{\omega_2}2$ to the least $\alpha<\omega_3$ such that $\forall\xi<\omega_2 ((\bigcup H)(\alpha+\xi)=f(\xi))$.
		
        We want to show that $\power(\omega_2)^{W[G]}=\power(\omega_2)^{W[G*H]}$, which completes the proof. While it follows from $\mathsf{ZF}$ that a ${<}\omega_3$-distributive poset does not add any subsets of $\omega_2$ and $\mathrm{Add}(\omega_3,1)$ is ${<}\omega_3$-closed, some choice principle is necessary to prove that ${<}\omega_3$-closure implies ${<}\omega_3$-distributivity for $\mathrm{Add}(\omega_3,1)$. However, by \cref{another_representation_of_W}, \cite[Theorem 9.36]{AD_ForcingAxiom_NSIdeal} implies that $\omega_2\mathchar`-\mathsf{DC}$ holds in $W[G]$ and this is enough for us as shown below:
		Let $f\colon\omega_2\to W[G]$ in $W[G\ast H]$ with a name $\dot{f}$. We may assume that $\emptyset\Vdash\dot{f}\colon\omega_2\to W[G]$. For each $p\in\mathrm{Add}(\omega_3,1)^{W[G]}$, let $f_p\in W[G]$ be the largest initial segment of $f$ decided by $p$. Namely, $f_p$ is a function such that $p\Vdash \check{f_p}\subset\dot{f}$ and for any function $g\in W[G]$ with $\mathrm{dom}(f_p)\subsetneq\mathrm{dom}(g)$, $p\not\Vdash\check{g}\subset\dot{f}$. Define a relation $\prec$ on $\mathrm{Add}(\omega_3,1)^{W[G]}$ by $p\prec q$ if $p\leq q$ and $f_p\supsetneq f_q$. If there is $p\in\mathrm{Add}(\omega_3, 1)^{W[G]}$ that decides $f$, then $f\in W[G]$ and we are done. Now suppose otherwise. Then for any $\succ$-sequence of conditions $\vec{p}$ of length $<\omega_2$, we can get a condition $q$ stronger than any conditions in the sequence by ${<}\omega_3$-closedness of $\mathrm{Add}(\omega_3,1)^{W[G]}$, and then take a condition $r\leq q$ that decides larger initial segment of $\dot{f}$ than $\bigcup_{\alpha<\mathrm{lh}(\vec{p})}f_{\vec{p}(\alpha)}$. By $\omega_2\mathchar`-\mathsf{DC}$ in $W[G]$, there is a $\succ$-sequence of length $\omega_2$. By ${<}\omega_3$-closedness again, we can take a condition stronger than any conditions in the sequence, which would decide $f$. Contradiction!
	\end{proof}

    We will use the following consequence of $\mathsf{MM}(\mathfrak{c})$ proved by Woodin in \cite{Woodin_equiv}.

    \begin{thm}[Woodin, \cite{Woodin_equiv}]\label{AD+_conjecture}
        Asssume that $\mathsf{ZFC}+\mathsf{MM}(\mathfrak{c})$ holds. Then the $\mathsf{AD}^+$ conjecture holds: Let $A_0, A_1\subset\mathbb{R}$ be such that $L(A_i, \mathbb{R})$ and let $\Delta_i$ be the Suslin-co-Suslin sets of $L(A_i, \mathbb{R})$.
        Suppose that any $B\in\Delta_0\cup\Delta_1$ is ${<}\omega_2$-universally Baire. Then
        \[
        L(\Delta_0\cup\Delta_1, \mathbb{R})\models\mathsf{AD}^+.
        \]
    \end{thm}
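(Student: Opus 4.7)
The plan is to prove the conclusion via a derived-model argument that converts the universal Baireness hypothesis into sufficient inner-model-theoretic content. By ${<}\omega_2$-universal Baireness, every $B \in \Delta_0 \cup \Delta_1$ comes with canonical tree pairs $(T_B, S_B)$ witnessing $B = p[T_B]$ and $\mathbb{R} \setminus B = p[S_B]$, and these representations are preserved under all set-forcings of size ${<}\omega_2$. The strategy is to pool these tree representations into a background model carrying enough Woodin cardinals for a derived-model theorem to apply.

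First, I would use $\mathsf{MM}(\mathfrak{c})$ together with the trees $(T_B, S_B)$ to run a core model induction, producing for each $B \in \Delta_0 \cup \Delta_1$ an lbr hod pair $(\mathcal{N}_B, \Omega_B)$ whose strategy code Wadge-dominates $B$ and whose strategy is itself ${<}\omega_2$-universally Baire. Here $\mathsf{MM}(\mathfrak{c})$ supplies the reflection at successor and limit stages of the induction, while universal Baireness underwrites branch-existence and genericity-iteration arguments. Next, by a comparison argument, one pools these witnesses into a single lbr hod pair $(\mathcal{N}, \Omega)$ carrying a Woodin limit of Woodin cardinals $\delta$ large enough to accommodate all the $\Omega_B$'s as tail strategies. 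By Steel's Derived Model Theorem, the derived model $\mathsf{DM}$ at $\delta$ (computed after a suitable collapse of $\delta$) satisfies $\mathsf{AD}^+$, and every $B \in \Delta_0 \cup \Delta_1$ belongs to $\mathsf{DM}$. A Wadge-initial-segment argument then exhibits $L(\Delta_0 \cup \Delta_1, \mathbb{R})$ as definable inside $\mathsf{DM}$, which transfers $\mathsf{AD}^+$ downward.

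The main obstacle is the core-model-induction step, which must be carried past the $\mathsf{AD}_{\mathbb{R}} + \Theta$-regular stage of the Solovay sequence---precisely the region where the condensing-set technology of Section~2 enters on the descriptive-set-theoretic side, and whose inner-model analogues are nontrivial to run under $\mathsf{MM}(\mathfrak{c})$. The delicate point is synchronizing the induction between $A_0$ and $A_1$: the two hierarchies of hod pairs witnessing $\Delta_0$ and $\Delta_1$ are a priori unrelated and must be compared in a common background in which all $\Omega_B$'s remain simultaneously universally Baire. The generic-absoluteness consequences of $\mathsf{MM}(\mathfrak{c})$ permit this, but verifying that the induction does not stall at a critical Wadge rank (where new Woodin cardinals are added on the mouse side) is essentially the substance of Woodin's original argument, and for the present paper it suffices to cite it.
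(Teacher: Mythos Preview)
The paper does not prove this theorem at all: it is stated with attribution to Woodin and cited from \cite{Woodin_equiv}, then used as a black box in the proof of \cref{HOD_does_not_change}. There is nothing to compare your proposal against, and indeed your closing sentence already concedes that citation suffices here.

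That said, your sketch is not an accurate outline of how Woodin's result is proved, and it imports assumptions and machinery specific to the present paper that have no role in the $\mathsf{AD}^+$ conjecture. In particular, there is no need to produce a hod pair with a Woodin limit of Woodin cardinals; that hypothesis belongs to the construction of $\mathsf{CDM}^-$ in Section~2, not to the proof of \cref{AD+_conjecture}. The condensing-set technology you allude to is likewise irrelevant to this statement. Woodin's argument in \cite{Woodin_equiv} proceeds through his analysis of the axioms $(*)^+$ and $(*)^{++}$ and the associated $\mathbb{P}_{\mathrm{max}}$ machinery, not through a core model induction producing lbr hod pairs. If you wish to supply more than a citation, you should consult \cite{Woodin_equiv} directly rather than reverse-engineer a proof from the ambient setup of this paper.
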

 
    \begin{lem}\label{HOD_does_not_change}
		$\HOD^W=\HOD^{W[G\ast H]}$.
	\end{lem}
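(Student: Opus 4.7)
The plan is to prove the two inclusions $\HOD^{W[G\ast H]} \subseteq \HOD^W$ and $\HOD^W \subseteq \HOD^{W[G\ast H]}$ separately, in the spirit of Woodin's proof that $\HOD^{L(\mathbb{R})} = \HOD^{L(\mathbb{R})[G]}$ in the classical $\mathbb{P}_{\max}$ setting. For the first inclusion, I would rely on weak homogeneity. The forcing $\mathbb{P}_{\max}$ is weakly homogeneous in $W$ by the standard argument of \cite[Theorem 4.5.4]{Larson2010forcing} (whose proof uses only $\mathsf{AD}^+$), $\mathrm{Add}(\Theta,1)$ is weakly homogeneous via the canonical symmetries on binary functions, and the two-step iteration of weakly homogeneous posets is weakly homogeneous. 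I would then argue by induction on rank: if $a \in \HOD^{W[G\ast H]}$, the inductive hypothesis gives $a \subseteq \HOD^W \subseteq W$, and weak homogeneity implies that for each $x \in W$, the statement $\check{x} \in \dot{a}$ is decided by the empty condition; hence $a$ is definable over $W$ from the same ordinal parameters used to define it in $W[G\ast H]$, and therefore $a \in \HOD^W$.

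For the reverse inclusion, the plan is to show that $W$ is OD in $W[G\ast H]$. By \cref{another_representation_of_W}, $W = L(\mathcal{M}_\infty^{\mathsf{b}}, X, \Gamma^*_g, \mathbb{R}^*_g)$ for any cofinal $X \in \power_{\omega_1}(\mathcal{M}_\infty^{\mathsf{b}})$, and the model on the right does not depend on the specific $X$ chosen. Since $\mathbb{P}_{\max} \ast \mathrm{Add}(\Theta, 1)$ is $\sigma$-closed in $W$ and preserves $\omega_1$, no countable subsets of $\mathcal{M}_\infty^{\mathsf{b}}$ are added, so $\power_{\omega_1}(\mathcal{M}_\infty^{\mathsf{b}})^W = \power_{\omega_1}(\mathcal{M}_\infty^{\mathsf{b}})^{W[G\ast H]}$; in particular the parameter $X$ can be eliminated once $\mathcal{M}_\infty^{\mathsf{b}}$ itself is OD. It therefore suffices to show $\mathcal{M}_\infty^{\mathsf{b}}$, $\Gamma^*_g$, and $\mathbb{R}^*_g$ are OD in $W[G\ast H]$. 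The reals are trivial since the forcing adds none. Granting that $\Gamma^*_g$ is OD, $\mathcal{M}_\infty^{\mathsf{b}}$ is OD by Sargsyan's $\HOD$ analysis from \cite{CPMP} (used in the proof of \cref{HOD_in_CDM}): $\mathcal{M}_\infty \vert \kappa_\infty = \HOD^W \| \Theta$ is the direct limit of countable lbr hod premice whose iteration strategies are coded in $\Gamma^*_g$, and $\mathcal{M}_\infty^{\mathsf{b}}$ is its unique iterable extension of height $(\kappa_\infty^+)^{\mathcal{M}_\infty}$.

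The main obstacle I expect is showing that $\Gamma^*_g$ itself is OD in $W[G\ast H]$. My plan is to invoke \cref{AD+_conjecture}: by \cref{AC_in_Pmax_extension}, $W[G\ast H] \models \mathsf{MM}^{++}(\mathfrak{c})$, so Woodin's $\mathsf{AD}^+$ conjecture applies. Combined with the fact that every $A \in \Gamma^*_g$ remains $<\omega_2$-universally Baire in $W[G\ast H]$—a preservation property standard for $\mathbb{P}_{\max}$-style forcings over $\mathsf{AD}^+$ models, provable via the canonical UB representations coming from the hod-pair iteration-strategy structure—one can characterize $\Gamma^*_g$ as the maximal class $\Gamma$ of sets of reals in $W[G\ast H]$ such that $L(\Gamma, \mathbb{R}) \models \mathsf{AD}^+$ and every element of $\Gamma$ is $<\omega_2$-UB; this characterization is manifestly OD. The delicate points I anticipate are (i) verifying UB-preservation through the two-step iteration despite the failure of $\mathsf{AC}$ in $W$, and (ii) ruling out the possibility that some new $<\omega_2$-UB set of reals in $W[G\ast H]$ outside $\Gamma^*_g$ extends the $\mathsf{AD}^+$ model; both should be handled by appealing to the hod-pair machinery underlying \cref{sets_of_reals_in_CDM}, which pins down $\Gamma^*_g$ through iteration strategies of lbr hod premice.
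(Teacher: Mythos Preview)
Your proposal is correct and follows essentially the same architecture as the paper's proof: weak homogeneity for one inclusion, and ordinal definability of $W$ in $W[G*H]$ via the $\mathsf{AD}^+$ conjecture for the other. Two small corrections are worth noting. First, the claim that a two-step iteration of weakly homogeneous posets is itself weakly homogeneous is false in general; the paper (implicitly) argues in two stages, $\HOD^{W[G][H]}\subseteq\HOD^{W[G]}\subseteq\HOD^W$, using the weak homogeneity of each factor separately. Second, your anticipated ``delicate points'' (i) and (ii) are resolved in the paper without any hod-pair machinery: for (i), the paper cites \cite{ThetaUB} ($\mathsf{AD}_{\mathbb{R}}$ already makes every set of reals ${<}\Theta$-universally Baire, and $\mathrm{Add}(\omega_3,1)$ adds no subsets of $\omega_2$); for (ii), the paper simply observes that $\Theta^W=\Theta^{W[G*H]}$, so any new set of reals would be Wadge-incomparable with the old ones, which contradicts $\mathsf{AD}^+$ in the joined model produced by \cref{AD+_conjecture}.
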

	\begin{proof}
        We have $\HOD^{W[G\ast H]}\subset\HOD^{W}$ because of the weak homogeneity of $\mathbb{P}_{\mathrm{max}}$ and $\mathrm{Add}(\omega_3,1)$. (It is a general fact that if a poset $\mathbb{P}$ is weakly homogeneous in $\HOD$, then $\HOD$ in a generic extension via $\mathbb{P}$ is contained in $\HOD$ of the ground model.) To show the other direction, it is enough to see that $W$ is ordinal definable in $W[G\ast H]$.

        \begin{claim}
			$\mathbb{R}^W = \mathbb{R}^{W[G*H]}$ and $\power_{\omega_1}(\mathcal{M}_{\infty}^{\mathsf{b}})^W =\power_{\omega_1}(\mathcal{M}_{\infty}^{\mathsf{b}})^{W[G\ast H]}$.
		\end{claim}
		\begin{proof}
            Both equations immediately follows from the fact that $G\ast H$ is generic for a countably closed poset.
        \end{proof}
        
		\begin{claim}\label{define_P(R)}
		  $\power(\mathbb{R})^W$ is ordinal definable in \(W[G*H]\).
		\end{claim}
		\begin{proof}
            For each $A$, let $\Delta_A$ be the set of Suslin-co-Suslin sets of $L(A, \mathbb{R})$.
            Let
            \begin{multline*}
            \Gamma=\bigcup\{\Delta_A\mid\exists A\subset\mathbb{R}^{W[G*H]} (\text{any set in $\Delta_A$ is $<\omega_2$-universally Baire and}\\
            L(A, \mathbb{R})^{W[G\ast H]}\models\mathsf{AD}^+)\}.
            \end{multline*}
            Then we can show that $\power(\mathbb{R})^W\subset\Gamma$ as follows.
            Let $B\in\power(\mathbb{R})^W$.
            As $W\models\mathsf{AD}_{\mathbb{R}}$ holds, there is an $A\in\power(\mathbb{R})^W$ such that $B\in\Delta_A$.
            Also, the fourth author showed in \cite{ThetaUB} that $\mathsf{AD}_{\mathbb{R}}$ implies that all sets of reals are $X$-universally Baire if there is a surjection from $\mathbb{R}$ onto $X$.
            Since $\mathrm{Add}(\omega_3, 1)^{W[G]}$ does not add any subset of $\omega_2$, $B$ is still $<\omega_3$-universally Baire in $W[G*H]$.
            Therefore $B\in\Gamma$.
            
            Since $\Theta^W=\Theta^{W[G\ast H]}$, any new set of reals cannot be Wadge compatible with sets of reals in $W$.
            By \cref{AD+_conjecture}, $W[G*H]$ satisfies $\mathsf{AD}^+$ conjecture, so $\Gamma=\power(\mathbb{R})^W$.
            Obviously $\Gamma$ is ordinal definable in $W[G*H]$.
		\end{proof}

        \begin{claim}\label{ordinal_definability_of_M_infty}
			$\mathcal{M}_{\infty}^{\mathsf{b}}$ is ordinal definable in $W[G\ast H]$.
		\end{claim}
		\begin{proof}
            First, $\mathcal{M}_{\infty}\vert\kappa_{\infty}=(\HOD\|\Theta)^W = (\HOD\|\Theta)^{W[G*H]}$ as in the proof of \cref{HOD_in_CDM}.
            Note that $\mathcal{M}_{\infty}^{\mathsf{b}}$ is a stack of all sound lbr hod premice $\mathcal{M}$ such that $\mathcal{M}_{\infty}\vert\kappa_{\infty}\triangleleft\mathcal{M}, \rho(\mathcal{M})=\kappa_{\infty}$, and whenever $\pi\colon\mathcal{N}\to\mathcal{M}$ is elementary and $\mathcal{N}$ is countable, there is an $\omega_1$-iteration strategy $\Lambda$ for $\mathcal{N}$ such that $\Sigma^{\pi}_{\mathcal{M}_{\infty}\vert\kappa_{\infty}}\subset\Lambda$ and $\mathrm{Code}(\Lambda)\in\power(\mathbb{R})^W$.
            By \cref{define_P(R)}, $\mathcal{M}_{\infty}^{\mathsf{b}}$ is ordinal definable in $W[G*H]$.
		\end{proof}

        These claims give a definition of $W$ inside of $W[G*H]$ using only ordinal parameters.
	\end{proof}

    \renewcommand{\proofname}{Proof of \cref{main_thm_on_Pmax}}
	\begin{proof}
        We already showed that $W[G\ast H]\models\mathsf{ZFC}$ in \cref{AC_in_Pmax_extension}.
        To show the desired property of $\omega_1, \omega_2$ and $\omega_3$ in $W[G\ast H]$, note that any club subset of $\kappa\cap\mathrm{Cof}(\omega)$ in $W$ is still a club subset of $\kappa\cap\mathrm{Cof}(\omega)$ in $W[G\ast H]$ by countable completeness: any counterexample to being an $\omega$-club would be a new $\omega$-sequence, but $G*H$ adds no such sequences. Note that this argument uses $\mathsf{DC}$ in $W$.

        Let $i\in\{1, 2\}$. In $W$, since $\mathsf{AD}$ holds, the club filter on $\omega_i\cap\mathrm{Cof}(\omega)$ is an ultrafilter (cf.\ \cite[Theorem 33.12 (i)]{SetTheory_Jech} for $i=1$ and \cite[Corollary 5.20]{Intro_Combi_of_Determinacy} for $i=2$). Then because $\omega_i^W=\omega_i^{W[G\ast H]}$ and $\HOD^W=\HOD^{W[G*H]}$ holds by \cref{HOD_does_not_change}, in $W[G\ast H]$, the restriction of the club filter on $\omega_i\cap\mathrm{Cof}(\omega)$ to $\HOD$ is an ultrafilter in $\HOD$. Because $\Theta^W=\omega_3^{W[G\ast H]}$ and $\HOD^W=\HOD^{W[G*H]}$ holds, \cref{main_thm_on_CDM}(4) implies that in  $W[G\ast H]$, the restriction of the club filter on $\omega_3\cap\mathrm{Cof}(\omega)$ to $\HOD$ is an ultrafilter in $\HOD$.
	\end{proof}
    \renewcommand{\proofname}{Proof}
 
	This completes the proof of \cref{MainTheorem}.

    \section{Final Remark}

    Using our proof, one can obtain a model satisfying the conclusion of \cref{MainTheorem} directly from some determinacy theory.

    \begin{thm}\label{variant_of_main_theorem}
    Suppose that
    \begin{itemize}
        \item $\mathsf{AD}^+ +\mathsf{AD}_{\mathbb{R}}$,
        \item There is an $\mathbb{R}$-complete normal measure on $\Theta$,
        \item There is a surjection from $\Theta$ onto $\power(\Theta)\cap\HOD$, and
        \item $\mathsf{HPC}$ (Hod Pair Capturing) holds.
    \end{itemize}
    Then there is a transitive model $W$ of $\mathsf{AD}^+$ containing $\ord\cup\mathbb{R}$ such that if $G*H\subset(\mathbb{P}_{\mathrm{max}}*\mathrm{Add}(\Theta, 1))^W$ is $W$-generic, then in $W[G*H]$, $\mathsf{ZFC}$ holds and for any $\kappa\in\{\omega_1, \omega_2, \omega_3\}$, the restriction of the club filter on $\kappa\cap\mathrm{Cof}(\omega)$ to $\HOD$ is an ultrafilter in $\HOD$.
    \end{thm}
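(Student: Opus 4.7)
The plan is to follow the proof of \cref{main_thm_on_Pmax} in the abstract determinacy setting, building the counterpart of $\mathsf{CDM}^-$ directly from the given hypotheses. Using $\mathsf{HPC}$ together with the standard $\HOD$ analysis of \cite{CPMP}, one identifies $\HOD|(\Theta^+)^{\HOD}$ with the bottom part $\mathcal{M}_{\infty}^{\mathsf{b}}$ of a direct limit of lbr hod premice, and sets
\[
W = L\bigl(\HOD|(\Theta^+)^{\HOD},\; \power_{\omega_1}(\HOD|(\Theta^+)^{\HOD}),\; \power(\mathbb{R}),\; \mathbb{R}\bigr).
\]
Since $W$ is built out of sets already living in the ambient $\mathsf{AD}^+$-model, $W\models\mathsf{AD}^+$ follows as in \cref{determinacy_in_CDM_1}; the surjection from $\Theta$ onto $\power(\Theta)\cap\HOD$ furnishes a well-ordering of $\power(\Theta)\cap\HOD$ of length $\Theta$, which combined with the $\HOD$ analysis lets us imitate the Solovay-style argument of \cref{determinacy_in_CDM_2} to get $\mathsf{DC}$ in $W$; and $\Theta^W=\Theta$ is regular by $\mathsf{AD}_{\mathbb{R}}$.

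Next I would verify that in $W$ the restriction of the club filter on $\Theta\cap\mathrm{Cof}(\omega)$ to $\HOD$ is a $\Theta$-complete normal ultrafilter. Let $\mu$ be the given $\mathbb{R}$-complete normal measure on $\Theta$; under the ambient determinacy hypotheses, standard arguments ensure $\mu$ concentrates on $\Theta\cap\mathrm{Cof}(\omega)$ and extends the restriction of the club filter there. The restriction of $\mu$ to $\HOD$ is then an $\HOD$-ultrafilter that extends the restriction of the club filter on $\Theta\cap\mathrm{Cof}(\omega)$ to $\HOD$, and hence the latter is already an ultrafilter in $\HOD^W$. This bypasses the condensing-set machinery of \cref{char_of_nu_in_CDM} by exploiting the given measure directly.

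Then I would force with $(\mathbb{P}_{\mathrm{max}}*\mathrm{Add}(\Theta,1))^W$. The analogue of \cref{another_representation_of_W} goes through for $W$, so the proof of \cref{AC_in_Pmax_extension} gives $W[G*H]\models\mathsf{ZFC}+\mathsf{MM}^{++}(\mathfrak{c})$. The $\HOD$-preservation argument of \cref{HOD_does_not_change} works unchanged: $\power(\mathbb{R})^W$ is ordinal definable in $W[G*H]$ via the $\mathsf{AD}^+$ conjecture (\cref{AD+_conjecture}) applied to Suslin-co-Suslin hierarchies, and $\mathsf{HPC}$ then makes $\mathcal{M}_{\infty}^{\mathsf{b}}$ ordinal definable. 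Since $\HOD^W=\HOD^{W[G*H]}$, $\Theta^W=\omega_3^{W[G*H]}$, and the countably closed forcing $G*H$ adds no new $\omega$-sequences so clubs on $\mathrm{Cof}(\omega)$ remain clubs, the ultrafilter property at $\omega_3$ transfers, while the property at $\omega_1$ and $\omega_2$ follows from standard $\mathsf{AD}$-theoretic facts as in the proof of \cref{main_thm_on_Pmax}.

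The main obstacle is verifying the ordinal definability of $\power(\mathbb{R})^W$ in $W[G*H]$ in this abstract setting, since \cref{define_P(R)} exploited the concrete identification of $\power(\mathbb{R})^W$ with $\Gamma^*_g$. Here one must combine the universal Baireness results of \cite{ThetaUB} (which require $\mathsf{AD}_{\mathbb{R}}$) with $\mathsf{HPC}$ to ensure every set of reals in $W$ is Suslin-co-Suslin and $<\omega_2$-universally Baire in $W[G*H]$, and then invoke the $\mathsf{AD}^+$ conjecture to rule out any new sets of reals of Wadge rank below $\Theta^W$ from entering the extension.
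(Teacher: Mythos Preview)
Your definition of $W$ matches the paper's, and your handling of the forcing step and $\HOD$-preservation is essentially correct. The genuine gap is in your second paragraph, where you claim to bypass the condensing-set machinery.

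You argue: the given $\mathbb{R}$-complete normal measure $\mu$ extends the club filter on $\Theta\cap\mathrm{Cof}(\omega)$, so $\mu\cap\HOD$ is a $\HOD$-ultrafilter extending the restriction of the club filter to $\HOD$, ``and hence the latter is already an ultrafilter in $\HOD^W$.'' That inference is invalid. An ultrafilter extending a filter does not make the filter an ultrafilter; what you need is the \emph{converse} direction, namely that for every $A\in\HOD^W$ with $A\in\mu$, there is a club $C\subset\Theta\cap\mathrm{Cof}(\omega)$ with $C\in W$ and $C\subset A$. Nothing in the hypotheses hands you such clubs, and producing them inside $W$ (rather than in the ambient $V$) is exactly the hard part. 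This is precisely what \cref{char_of_nu_in_CDM} accomplishes via condensing sets: \cref{omega-club_2} constructs the required club from closure points of a function defined using a strongly condensing $X\in\power_{\omega_1}(\mathcal{M}_{\infty}^{\mathsf{b}})$, which lives in $W$.

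The paper uses $\mu$ in a completely different way. It first proves \cref{failure_of_covering}: $\cf(\ord\cap\mathcal{H}^+)=\omega$, using $\mu$ together with the square sequence in $\mathcal{H}^+$ and the amenability of $\mu$ to $\HOD$ from \cite{HODmeas}. This failure of covering is the input needed to run the arguments of \cite[Chapter~9]{LSAbook} that produce strongly condensing sets in this abstract setting (the analogue of \cref{strongly_condensing}, but more involved since one no longer has an excellent hod pair in the background). Only then does the analogue of \cref{char_of_nu_in_CDM} go through. So the condensing-set machinery is not optional here; the role of $\mu$ is to enable it, not to replace it.

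A minor point: $\Theta$ regular does not follow from $\mathsf{AD}_{\mathbb{R}}$ alone (there are models of $\mathsf{AD}_{\mathbb{R}}$ with $\Theta$ singular); it follows here because a normal measure on $\Theta$ forces $\Theta$ to be regular.
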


    Note that the assumption of \cref{variant_of_main_theorem} is consistent relative to a Woodin limit of Woodin cardinals:
    Let $\mathcal{V}$ and $g$ be as in \cref{main_thm_on_CDM}. In $\mathcal{V}[g]$, we define
    \[
    \mathsf{CDM}[\mu]=L(\mathcal{M}_{\infty}, \cup_{\xi<\delta_{\infty}}{}^{\omega}\xi, \Gamma^*_g, \mathbb{R}^*_g)[\mu],
    \]
    where $\mu$ is the club filter on $\Theta^{\mathsf{CDM}}\cap\mathrm{Cof}(\omega)$. Then one can show that $\mathsf{CDM}[\mu]$ satisfies the assumption of \cref{variant_of_main_theorem}.

    We only give a proof outline of \cref{variant_of_main_theorem}. By the $\HOD$ computation up to $\Theta$, $\HOD\|\Theta$ can be represented as a direct limit of lbr hod mice, so let $\mathcal{H}$ be such representation. Also, the direct limit system gives the canonical iteration strategy $\Sigma$ for $\mathcal{H}$. Let $\mathcal{H}^+$ be the stack of all sound $\Sigma$-premice $\mathcal{M}$ over $\mathcal{H}$ such that
	\begin{itemize}
		\item $\rho(\mathcal{M})=\Theta$, and
		\item $\mathcal{M}$ is countably iterable in the following sense: every countable transitive $\mathcal{N}$ embeddable into $\mathcal{M}$ via $\pi$ has an $\omega_1$-iterable as a $\Sigma^\pi$-premouse over $\pi^{-1}(\mathcal{H})$.
	\end{itemize}
	We let
    \[
    W=L(\mathcal{H}^+, \power_{\omega_1}(\mathcal{H}^+), \power(\mathbb{R})).
    \]
    Then one can show that $\mathcal{H}^+ = (\HOD\|\Theta^+)^W$.
    The existence of strongly condensing sets, which is a countable subset of $\mathcal{H}^+$, is shown by the argument in \cite[Chapter 9]{LSAbook}. It is similar to the proof of \cref{strongly_condensing} but somewhat more involved.
    To do this, we need the following kind of failure of covering.

    \begin{lem}\label{failure_of_covering}
	   $\cf(\ord\cap\mathcal{H}^+)=\omega$.
	\end{lem}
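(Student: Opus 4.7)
The plan is to produce an $\omega$-sequence cofinal in $\ord\cap\mathcal{H}^+$ by building a nontrivial elementary embedding $\pi\colon\mathcal{H}^+\to\mathcal{H}^+$ with $\crit(\pi)=\Theta$ and reading off its critical sequence. This mirrors in spirit the self-embedding argument used in the proof of \cref{condensing}, but with the measure on $\Theta$ playing the role of the genericity iterations from earlier.

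The first step is to form an ultrapower $j\colon\mathcal{H}^+\to\mathcal{N}$ using the $\mathbb{R}$-complete normal measure $\mu$ on $\Theta$. Since each $\mathcal{M}\triangleleft\mathcal{H}^+$ is sound with $\rho(\mathcal{M})=\Theta$, every such $\mathcal{M}$ is coded by an element of $\power(\Theta)\cap\HOD$, a class which by hypothesis is enumerated by $\Theta$. This enumeration lets me define $j$ on $\mathcal{H}^+$ using only functions coded in $\power(\Theta)\cap\HOD$, circumventing the absence of full $\mathsf{AC}$. The $\mathbb{R}$-completeness of $\mu$ yields countable completeness, so the ultrapower is well-founded; normality gives $\crit(j)=\Theta$ and $j(\Theta)>\Theta$.

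The second step is to identify $\mathcal{N}=j(\mathcal{H}^+)$ with a structure that maps back into $\mathcal{H}^+$. By elementarity and $\mathsf{HPC}$, $\mathcal{N}$ is the stack of all countably iterable sound $j(\Sigma)$-premice over $j(\mathcal{H})$ projecting to $j(\Theta)$. Since $j\restriction\mathcal{H}$ is an iteration map via $\Sigma$, the countable iterability clause built into the definition of $\mathcal{H}^+$ allows us to compare $\mathcal{N}$ with $\mathcal{H}^+$ above $\Theta$, producing an embedding $\sigma\colon\mathcal{N}\to\mathcal{H}^+$. Setting $\pi:=\sigma\circ j$ gives the desired self-embedding; its critical sequence $\kappa_0=\Theta$, $\kappa_{n+1}=\pi(\kappa_n)$ is strictly increasing in $\ord\cap\mathcal{H}^+$, so $\lambda:=\sup_n\kappa_n$ has cofinality $\omega$ and $\lambda\leq\ord\cap\mathcal{H}^+$.

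The final step, which is the main obstacle, is ruling out $\lambda<\ord\cap\mathcal{H}^+$. If $\lambda$ were strictly less, it would be a fixed point of $\pi$ and a cardinal of $\mathcal{H}^+$; choosing the least $\mathcal{M}\triangleleft\mathcal{H}^+$ extending $\mathcal{H}^+\|\lambda$ with $\rho(\mathcal{M})\leq\lambda$, one wants to compare $\mathcal{M}$ with its $\pi$-image and exploit the minimality of the standard parameter of $\mathcal{M}$ together with the impossibility of a nontrivial elementary self-embedding of a sound $\Sigma$-premouse. This is precisely the sort of rigidity statement at the top of $\HOD$ that is developed in \cite[Chapter 9]{LSAbook} in the LSA context, and the argument here should follow that template closely — with the $\mathbb{R}$-complete measure $\mu$ substituting for the iteration maps used there. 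A secondary but nontrivial technical point, addressed already in Step 1, is justifying the ultrapower construction without full choice, where the surjection $\Theta\to\power(\Theta)\cap\HOD$ and the $\mathbb{R}$-completeness of $\mu$ are essential.
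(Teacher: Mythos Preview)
Your plan has a genuine gap at its core: the self-embedding $\pi\colon\mathcal{H}^+\to\mathcal{H}^+$ with $\crit(\pi)=\Theta$ that you want cannot exist. Every proper initial segment $\mathcal{M}$ of $\mathcal{H}^+$ above $\mathcal{H}$ is sound with $\rho(\mathcal{M})=\Theta$, so every ordinal in $[\Theta,\ord\cap\mathcal{H}^+)$ has cardinality $\Theta$ in $\mathcal{H}^+$; hence $\Theta$ is the largest cardinal of $\mathcal{H}^+$ and is definable there without parameters. Any fully elementary $\pi\colon\mathcal{H}^+\to\mathcal{H}^+$ must therefore fix $\Theta$, contradicting $\crit(\pi)=\Theta$. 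Your Step~2 fails for a concrete reason that reflects this: the identification one actually gets (and which the paper proves) is $\mathcal{H}^+ = j(\mathcal{H})\vert(\Theta^+)^{j(\mathcal{H})}$, so $\mathcal{H}^+\trianglelefteq j(\mathcal{H})\triangleleft j(\mathcal{H}^+)=\mathcal{N}$ and in particular $j(\Theta)\geq\ord\cap\mathcal{H}^+$. Thus $\mathcal{N}$ properly extends $\mathcal{H}^+$, and there is no factor map $\sigma\colon\mathcal{N}\to\mathcal{H}^+$ to compose with $j$. Comparison does not help here: you are trying to map a longer premouse into a shorter one.

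The paper's argument is organized quite differently and does not attempt a self-embedding. It first invokes the Schimmerling--Zeman construction to produce a $\square_{\Theta}$-sequence $\vec{C}$ in $\mathcal{H}^+$ which is unthreadable if $\cf(\lambda)>\omega$; taking the ultrapower of an auxiliary model $L(\mathcal{H}^+,f)[\mu]$ (with $f\colon\Theta\to\mathcal{H}^+$ a bijection coming from the surjection hypothesis) and reading off a thread yields $\cf(\lambda)<\Theta$. Then, fixing a cofinal $g\colon\tau\to\lambda$ with $\tau<\Theta$ and passing to a second auxiliary ultrapower, one shows $\lambda=(\Theta^+)^{j(\mathcal{H})}$ via the stack characterization of $\mathcal{H}^+$; {\L}o\'s reflection gives $\cf((\kappa^+)^{\mathcal{H}})=\tau$ for $\mu$-almost every $\kappa<\Theta$, and $\mathsf{HPC}$ forces those cofinalities to be $\omega$, whence $\tau=\omega$. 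The role of $\mathsf{HPC}$ in your sketch is unclear; in the paper it enters precisely at this last step.
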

	\begin{proof}
		Let $\lambda=\ord\cap\mathcal{H}^+$. We make use of the square principle in $\mathcal{H}^+$. We say that a \emph{$\square_{\Theta}$-sequence of $\mathcal{H}^+$} is a sequence $\langle C_{\alpha}\mid\alpha<\lambda\rangle$ such that for each $\alpha<\lambda$,
        \begin{itemize}
            \item $C_{\alpha}\subset\alpha$ is a club subset of $\alpha$,
            \item for each limit point $\beta$ of $C_{\alpha}$, $C_{\beta}=C_{\alpha}\cap\beta$, and
            \item the order type of $C_{\alpha}$ is at most $\kappa$.
        \end{itemize}
        We also say that the sequence $\langle C_{\alpha}\mid\alpha<\lambda\rangle$ is \emph{threadable} if there is a club $E\subset\lambda$ such that $C_{\alpha} = E\cap\alpha$ for each limit point $\alpha$ of $E$. The construction of a square sequence in \cite{SZsquare} shows that
        \begin{itemize}
            \item there is a $\square_{\Theta}$-sequence $\vec{C}=\langle C_{\alpha}\mid\alpha<\lambda\rangle$ of $\mathcal{H}^+$, and
            \item if $\cf(\lambda)>\omega$, then $\vec{C}$ is not threadable.
        \end{itemize}
        The second clause follows because a thread of a $\square_{\Theta}$-sequence of $\mathcal{H}^+$ is essentially a $\Sigma$-mouse $\mathcal{M}$ such that $\mathcal{H}\triangleleft\mathcal{M}$ and $\rho(\mathcal{M})=\Theta$ and $\mathcal{M}$ is countably iterable.
		
		\begin{claim}
			$\cf(\lambda)<\Theta$.
		\end{claim}
        \begin{proof}
        By the assumption of \cref{variant_of_main_theorem}, there are a bijection $f\colon\Theta\to\mathcal{H}^+$ and a normal $\mathbb{R}$-complete ultrafilter $\mu$ on $\Theta$. In \cite{HODmeas}, it is shown that $\mu$ is amenable to $\mathcal{H}^+$. Let
        \[
        W_0=L(\mathcal{H}^+, f)[\mu].
        \]
        Let $\mu_0=\mu\cap W_0\in W_0$ and let $\pi_{\mu_0}\colon W_0\to\ult(W_0, \mu_0)$ be the ultrapower map.
        Suppose that $\cf(\lambda)=\Theta$ and we shall reach a contradiction by defining a thread through $\vec{C}$ using $\pi_{\mu_0}$. Let $\pi_{\mu_0}(\vec{C})=\langle D_\alpha\mid\alpha<\pi_{\mu_0}(\lambda)\rangle$. Also, let $\eta=\sup\pi_{\mu_0}[\lambda]$ and let $E=\pi_{\mu_0}^{-1}[D_\eta]$. Since $\pi_{\mu_0}[\lambda]$ is an $\omega$-club and $\pi_{\mu_0}[\lambda]\in \ult(W_0, \mu_0)$, so $D_\eta\cap\pi_{\mu_0}[\lambda]\neq\emptyset$ and thus $E\neq\emptyset$. Let $\alpha$ be a limit point of $E$. Then $\pi_{\mu_0}(\alpha)\in D_\eta$ and thus $D_{\pi_{\mu_0}(\alpha)}=D_\eta\cap\pi_{\mu_0}(\alpha)$. It follows that $C_\alpha=E\cap\alpha$, which means that $E$ is a thread through $\vec{C}$.
        \end{proof}
  
		Now let $\tau=\cf(\lambda)<\Theta$ and let $g\colon\tau\to\lambda$ be cofinal. Let
        \[
        W_1=L(\mathcal{H}^+, g)[\mu].
        \]
        Let $\mu_1=\mu\cap {W_1}\in W_1$ and let $\pi_{\mu_1}\colon W_1\to\ult(W_1, \mu_1)$ be the ultrapower map.

        Let $h\colon\Theta\to\mathcal{H}$ be defined by $h(\kappa)=\mathcal{H}\vert(\kappa^+)^{\mathcal{H}}$. Note that $h\in W_1$. Then $\pi_{\mu_1}(h)(\Theta)=\pi_{\mu_1}(\mathcal{H})\vert(\Theta^+)^{\pi_{\mu_1}(\mathcal{H})}$ is a $\Sigma$-premouse over $\mathcal{H}$. Since $\mu$ is countably complete, $\pi_{\mu_1}(h)(\Theta)$ is countably iterable. Hence $\pi_{\mu_1}(h)(\Theta)\trianglelefteq\mathcal{H}^+$. 

        We in fact claim that $\pi_{\mu_1}(h)(\Theta)=\mathcal{H}^+$. Suppose not. We then have some $\mathcal{M}\trianglelefteq\mathcal{H}^+$ such that $\rho(\mathcal{M})=\Theta$ and $\pi_{\mu_1}(h)(\Theta)\triangleleft\mathcal{M}$. Notice now that $\mathcal{M}\in \pi_{\mu_1}(\mathcal{H}^+)$ as $\mathcal{M}$ is the transitive collapse of an appropriate fine structural hull of $\pi_{\mu_1}(\mathcal{M})$\footnote{E.g.\ if $\rho_1(\mathcal{M})=\Theta$ then $\mathcal{M}$ is the transitive collapse of the $\Sigma_1$-hull of $\pi_{\mu_1}(\mathcal{M})$ with parameters from $\Theta\cup\{p_1(\pi_{\mu_1}(\mathcal{M})\}$.}. It follows that $\pi_{\mu_1}(h)(\Theta)\triangleleft \pi_{\mu_1}(\mathcal{H})\vert(\Theta^+)^{\pi_{\mu_1}(\mathcal{H})}$, which is a contradiction. We thus have that $\lambda=(\Theta^+)^{\pi_{\mu_1}(\mathcal{H}^+)}$.
        
        So $\ult(W_1, \mu_1)\models\cf((\Theta^+)^{\pi_{\mu_1}(\mathcal{H})})=\tau$ as witnessed by $g\in \ult(W_1, \mu_1)$, and thus
        \[
        \{\kappa<\Theta\mid W_1\models\cf((\kappa^+)^{\mathcal{H}})=\tau\}\in\mu_1.
        \]
        By $\mathsf{HPC}$, $\cf((\kappa^+)^{\mathcal{H}})=\omega$ for $\mu$-almost all $\kappa$. Therefore, $\tau=\omega$.
	\end{proof}
 
    The rest of the argument is more or less the same as what we did in this paper, so we leave it to the readers.

\newcommand{\etalchar}[1]{$^{#1}$}

\end{document}